\documentclass[11pt]{article}
\usepackage{fullpage}
\usepackage[psamsfonts]{eucal}
\usepackage{amsmath}
\usepackage{amsfonts}
\usepackage{dsfont}
\usepackage{caption}
\usepackage{amssymb}
\usepackage{amstext}
\usepackage{amscd}
\newcommand{\bT}{\mathbb{T}}
\usepackage{amsthm}
\usepackage{makeidx}
\usepackage{graphicx}
\usepackage[colorlinks=true,linkcolor=blue]{hyperref}
\usepackage{supertabular}
\usepackage{enumerate}
\newcommand{\cG}{\mathcal{G}}
\usepackage{titling}
\usepackage{wasysym}
\usepackage{color}
\usepackage{xcolor}
\usepackage{comment}
\usepackage{cleveref}
\usepackage{listings}
\usepackage{relsize}

\usepackage{mathptmx}
\usepackage{microtype}
\definecolor{forestgreen}{RGB}{34, 139, 34}

\def\@settitle{\begin{center}
    \bfseries
 \normalfont\LARGE\@title
  \end{center}
}
\def\@setauthors{\begin{center}
 \normalsize\@author
  \end{center}
}
\def\author#1{\par
    {\centering{\authorfont#1}\par\vspace*{0.05in}}
}

\def\titlefont{\fontsize{15}{17}\bfseries\boldmath\selectfont\centering{}}
\def\authorfont{\fontsize{13}{15}}

\let\affiliationfont\rhfont

\def\address#1{\par
    {\centering{\affiliationfont#1\par}}\par\vspace*{11pt}
}

\def\body{
\setcounter{footnote}{0}
\def\thefootnote{\alph{footnote}}
\def\@makefnmark{{$^{\rm \@thefnmark}$}}
}

\def\title#1{
    \thispagestyle{plain}
    \vspace*{-14pt}
    \vskip 79pt
    {\centering{\titlefont #1\par}}
    \vskip 1em
}
\theoremstyle{plain}
\newtheorem{thm}{Theorem}

\newtheorem{prop}[thm]{Proposition}
\usepackage{cite}
\newcommand{\bP}{\mathbb{P}}
\newtheorem{lemma}[thm]{Lemma}

\newtheorem{dfn}[thm]{Definition}

\newtheorem{remark}[thm]{Remark}

\numberwithin{thm}{section}

\newcommand{\fD}{\mathfrak{D}}
\newcommand{\cT}{\mathcal{T}}
\newcommand{\bN}{\mathbb{N}}

\numberwithin{equation}{section}

\newcommand{\bw}{\mathbf{w}}
\newcommand{\bR}{\mathbb{R}}
\newcommand{\Cov}{\textnormal{Cov}}

\usepackage[margin=.75in]{geometry}

\newcommand{\Var}{\textnormal{Var}}

\newcommand{\bC}{\mathbb{C}}

\newcommand{\ri}{\mathrm{i}}
\newcommand{\be}{\begin{equation}}
\newcommand{\ee}{\end{equation}}
\newcommand{\deq}{\mathrel{\mathop:}=} 

\newcommand{\bD}{\mathbb D}

\newcommand{\distr}{\textnormal{distr}}
\newcommand{\bH}{\mathbb H}
\newcommand{\cN}{\mathcal N}
\newcommand{\wt}{\widetilde}
\newcommand{\rd}{{\rm d}}
\newcommand{\oCplus}{\overline{\bC^+}}
\newcommand{\Cplus}{\bC^+}
\newcommand{\re}{{\rm e}}
\newcommand{\bE}{\mathbb{E}}
\newcommand{\spec}{\textnormal{spec}}
\newcommand{\bd}{\mathbf{d}}

\newcommand{\dI}{\mathds{I}}
\begin{document}
\title{The Gaussian Wave for Graphs of Finite Cone Type}

\vspace{1.2cm}

\noindent \begin{minipage}[c]{0.6\textwidth}
 \author{Amir Dembo}
\address{Stanford University\\
   adembo@stanford.edu}
 \end{minipage}
  \begin{minipage}[c]{0.2\textwidth}
 \author{Theo McKenzie}
\address{Stanford University\\
   theom@stanford.edu }
 \end{minipage}
\abstract{We show that for any infinite tree of finite cone type satisfying a mild expansion condition, the only typical process on its vertices with covariance induced by the Green's function is the Gaussian wave. This generalizes a result of Backhausz and Szegedy, who proved this for the infinite regular tree of degree $d\geq 3$. We do this by giving a reduction to a statement concerning the distribution of the inner product of our process with columns of the Green's function, which in turn are straightforward to calculate. 

As a consequence, for random bipartite biregular graphs, the distribution of local neighborhoods of eigenvectors must approximate the Gaussian wave. Moreover, for generic configuration models including random lifts, the local distribution of a uniformly chosen eigenvector from any
arbitrarily small spectral window likewise converges to the Gaussian wave. 
}
\section{Introduction}
Berry's random wave conjecture is a foundational prediction in the study of quantum chaotic systems \cite{berry1977regular,berry1983semiclassical}. It theorizes that the local statistics of eigenfunctions converge to those of a Gaussian random wave. The discrete version of this question is well studied, and large sparse graphs have long been used as a system of quantum chaos \cite{smilansky2013discrete}. Whereas a general heuristic is that these graphs will have their spectral measure converge to their local weak limit, the idea of Berry's conjecture is that, for chaotic models, we can move past this weak convergence to a stronger notion dealing with local statistics.

 We will make this explicit through the \emph{Green's function}. Define $\Cplus\deq\{z\in \bC:\Im[z]>0\}$ and its closure $\oCplus\deq\{z\in \bC:\Im[z]\geq 0\}$. We define the Green's function for a graph $\cG$ with adjacency operator $A_{\cG}$ and $z\in \bC^+$ as
\[
G(z,\cG)\deq [A_{\cG}-z\dI]^{-1}.
\]
We can then extend the definition to the real line by saying that for $\lambda\in \bR$, 
\[
G(\lambda,\cG)\deq \lim_{\eta\rightarrow 0^+}G(\lambda+\ri\eta,\cG).
\]
If $\cG$ is a finite graph on vertex set $[N]$, we define the \emph{Stieltjes transform} $m(z,\cG)$ to be the normalized trace of the Green's function, $\frac1N\sum_{x\in[N]}G_{xx}$. We can think of this as the expected value of the diagonal entry of the Green's function for a vertex selected uniformly at random. More generally, if $\cG$ is a \emph{unimodular random graph} (see \cite{aldous2007processes} for a thorough overview), we can take the expectation over the root of the graph. Therefore, we define the
Stieltjes transform of $\cG$ for $z\in \oCplus$ to be
\[
m(z,\cG)\deq \bE_{o}[G(z,\cG)_{oo}],
\]
where $\bE_o[\cdot]$ refers to the expectation over the randomly selected root according to the unimodular measure.

We say that $\lambda\in \bR$ is in the \emph{purely absolutely continuous spectrum} if 
\be\label{eq:absolutelycontinuous}
0<\Im(m(\lambda))<\infty.
\ee
Whereas point spectrum corresponds to localized states, the absolutely continuous spectrum corresponds to dynamical delocalization of the quantum operator (e.g. \cite[Theorem 2.6]{aizenman2015random}), so it is where we witness chaotic behavior of eigenfunctions from Berry's conjecture. 
The impressive work of Backhausz and Szegedy \cite{backhausz2019almost} proved that every eigenvector, besides the trivial eigenvector of eigenvalue $d$, has the local statistics of a Gaussian process on the infinite regular tree $\cT_d$ (see \Cref{thm:mainbs}). As the spectrum of the adjacency operator on the infinite regular tree is purely absolutely continuous, this gives evidence towards Berry's conjecture.

The $d$-regular graph benefits from a key simplifying feature: its local weak limit is the infinite regular tree, which has a large automorphism group. This is due to it being vertex transitive, and, more strongly, distance regular (for any pair of paths of the same length, there is an automorphism sending one path to the other). Any limiting wave must respect this symmetry, and this invariance plays a decisive role in identifying and solving the associated formulas of a limiting process.

Nevertheless, there is no reason to expect the symmetry of the regular tree to be essential to chaotic behavior, and we prove a result toward Berry’s conjecture for a broader class of random graph models generalizing the random regular graph. Specifically, for a large family of configuration models---random graphs with a prescribed degree distribution---we show that eigenvectors exhibit Gaussian-type statistics, driven solely by local expansion properties. A principal innovation is a structural decomposition of the Gaussian process on the infinite tree via the Green’s function, which makes this mechanism transparent. The Gaussian wave can be realized as a Gaussian process obtained by pushing forward i.i.d. Gaussian noise through the resolvent. Namely, for $z=\lambda+\ri\eta\in\overline{\bC^+}$, graph $\cT$, and a family $\{\xi_x\}_{x\in V(\cT)}$ of i.i.d.\ random variables with $\xi_x\sim\cN(0,1)$, the Gaussian wave $\Psi^z(\cT)$ (see \Cref{dfn:gwave}) admits the representation
\be\label{eq:gaussianprocess}
\Psi^z(\cT)\overset{d}{=} \sqrt{\eta}\sum_{x\in V(\cT)} \xi_x\, G(z,\cT)_x,
\ee
where $G(z,\cT)_x(\cdot)\deq G(z,\cT)_{\cdot x }$ denotes the column of the kernel of $G(z,\cT)$ corresponding to $x\in V(\cT)$.   To show any candidate distribution must be Gaussian, we examine its entropy, and one key insight is that to find the entropy of the Gaussian wave, it is sufficient to consider moments of the form 
\[
\{\langle \Psi^z(\cT),G(z,\cT)_x\rangle\}_{x\in V(\cT)}.
\]
 Of course, because $\Psi^z$ has the decomposition from \eqref{eq:gaussianprocess}, the above is easy to compute.

While the Green's function representation of covariance is known (see \cite[Section 1.1]{he2025gaussian}), treating this decomposition as a primary analytical tool allows for a direct comparison between the cases $z\in \bC^+$ and $z\in\bR$ and provides a transparent framework for analyzing the entropy of the Gaussian wave. \eqref{eq:gaussianprocess} can be compared to the heuristic related to Berry's conjecture that on a hyperbolic manifold, the Fourier coefficients of the Poisson kernel should be i.i.d. Gaussian \cite{aurich1993statistical,zelditch2010recent}. Indeed, we would expect this type of decomposition to hold in many models and to be useful in future work.

In order to precisely state our results, we give the following definition of an eigenvector process with covariance determined by the Green's function. This was defined for the regular tree in \cite{elon2009gaussian}, but a more general definition is as follows. 

\begin{dfn}\label{dfn:gwave}
 An \emph{eigenvector process} $\Phi^{z}(\cT)$ on the (potentially infinite) graph $\cT$ for $z\in \overline{\bC^{+}}$,  is a real-valued process on $V(\cT)$ that is automorphism invariant, is centered, and for every $x,y\in V(\cT)$,  $\bE[\Phi^z(\cT)_x\Phi^z(\cT)_y]=\Im[G(z,\cT)_{xy}]$.

   The \emph{Gaussian wave} $\Psi^{z}(\cT)$ is the unique real valued Gaussian eigenvector process.
\end{dfn}
By the fact that $z\in \overline{\bC^{+}}$, the imaginary part of the Green's function is positive semidefinite, and therefore forms a valid covariance matrix. Moreover, \eqref{eq:gaussianprocess} has the correct covariance by the Ward identity \eqref{eq:wardidentity}. For $z\in \bR$, the Gaussian wave is nontrivial with finite variance if and only if $z$ is in the purely absolutely continuous spectrum. 

For families of finite random graphs, we will be interested in the Gaussian wave on their universal cover, defined as follows.
\begin{dfn}
For any finite graph $\cG$, its \emph{universal cover} $\cT$ is the tree whose vertices are nonbacktracking walks in $\cG$, with adjacency given by one-step extensions of such walks.  
There is a canonical projection $\pi: V(\cT)\to V(\cG)$
that maps each walk to its endpoint in $\cG$.
To obtain a unimodular random rooted tree, we select a root $o$ uniformly at random from $V(\cG)$ and consider the component of $\cT$ corresponding to walks starting at $o$.

A \emph{labeling} of $\cG$ is a function $f:V(\cG)\to Y$, where $Y$ is a separable metrizable topological space.  
Such a labeling induces a labeling $\hat f:V(\cT)\to Y$ on the universal cover by
\[
\hat f(x)\deq f(\pi(x)).
\]
We denote by $\distr^*(f)$ the law of this randomly rooted, labeled universal cover.
\end{dfn}

We study the weak limits of the above defined distributions, therefore, we define the L\'{e}vy-Prokhorov metric for the weak topology (see \cite[Section 11.3]{dudley2018real}).
\begin{dfn}
    For $m\in \bN$ and Borel set $A\in \bR^m$, we denote $A^\epsilon\deq\{x\in \bR^m:\inf_{y\in A}|x-y|<\epsilon\}$.  For two random vectors $X,Y\in \bR^m$, we define the \emph{L\'evy-Prokhorov distance} $d_{LP}(X,Y)$ between $X$ and $Y$ as
    \be\label{eq:levyprokhorov}
    d_{LP}(X,Y)\deq \inf\bigg\{\epsilon\geq 0:\bP( X\in A)\leq \bP(Y\in A^\epsilon)+\epsilon \textnormal{ for all Borel sets }A\bigg\}.
    \ee

    For process $\Phi$ on $\cT$, normalized so that $\Var[\Phi]=1$, we measure its distance to the Gaussian wave $\Psi^\lambda(\cT)$ for $\lambda\in \bR$ and  $k\in \bN$ through the quantity
    \be\label{eq:gaussdiff}
    \Xi_k^\lambda(\Phi)\deq\inf_{0\leq \sigma\leq \Var[\Psi^\lambda]^{-1/2}}\bigg\{d_{LP}(B_k(\Phi^\lambda),B_k(\sigma\Psi^\lambda))\bigg\},
    \ee
    where $B_k(\cdot)$ gives the distribution of the ball of radius $k$ around a randomly selected vertex of $\cT$. 
\end{dfn}
$\Xi_{k}^\lambda$ therefore takes the infimum of the L\'evy-Prokhorov distance of $\Phi$ to any multiple of $\Psi^\lambda$ with variance at most 1. In fact, based on its definition, $\Var[\Psi^z]=\Im[m(z)]$. Utilizing the above definitions, we can state the main theorem of \cite{backhausz2019almost}. 
\begin{thm}[Theorem 2.2 of \cite{backhausz2019almost}]\label{thm:mainbs}
   For any $d\ge 3$, $\epsilon>0$, and $k\in \bN$, there exist $N_0\in \bN$ and $\delta>0$ such that for all $N\ge N_0$, with probability at least $1-\epsilon$ over the choice of a uniformly random $d$-regular graph $\cG$ on $N$ vertices, the following holds: for every vector $\psi$ with $\|\psi\|=1$, if there exists $\lambda\in \bR$ with $\lambda\le d-\epsilon$ such that
$\|(A_{\cG}-\lambda)\psi\|\le \delta$,
then 
$\Xi_k^\lambda\!\left(\distr^*(\sqrt{N}\psi)\right)<\epsilon.$
\end{thm}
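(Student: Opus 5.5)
We prove this by contradiction and compactness, following the Backhausz--Szegedy strategy, with the Gaussian identification done through the Green's function decomposition \eqref{eq:gaussianprocess}. Suppose the statement fails. Then for some $\epsilon,k$ there are $N_i\to\infty$, $\delta_i\to 0$, and, with probability at least $\epsilon$, $d$-regular graphs $\cG_i$ with unit vectors $\psi_i$ and eigenvalue estimates $\lambda_i\le d-\epsilon$ satisfying $\|(A_{\cG_i}-\lambda_i)\psi_i\|\le\delta_i$ and $\Xi_k^{\lambda_i}(\distr^*(\sqrt{N_i}\psi_i))\ge\epsilon$. We may restrict to $|\lambda_i|\le d$, because otherwise $\|(A-\lambda_i)\psi_i\|$ is bounded below. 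The labelings $\sqrt{N_i}\psi_i$ have second moment $1$ at the root, so the laws $\distr^*$ are tight. Passing to a subsequence, $\lambda_i\to\lambda$ and $\distr^*(\sqrt{N_i}\psi_i)$ converges locally weakly to a unimodular labeled tree. Since random regular graphs have few short cycles, this limit is a process $\Phi$ on $\cT_d$ that is automorphism invariant in the unimodular sense and satisfies $\Var\Phi\le1$.

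The limit inherits two properties. First, $\Phi$ satisfies the eigenvector equation $(A\Phi)_o=\lambda\Phi_o$ almost surely, because $\delta_i\to0$ and the equation is local. Second, $\Phi$ is typical: it is a limit of labelings of random regular graphs. The key input from \cite{backhausz2019almost} is the entropy inequality satisfied by such typical processes. For every finite subtree $F\subset\cT_d$,
\[
\sum_{\text{edges}}h(\text{edge marginal})-(d/2-1)\cdots\ \ge 0,
\]
that is, the Bethe-type entropy $\tfrac d2 h(\Phi_{\text{edge}})-(d-1)h(\Phi_o)\ge0$, which lifts to balls by additivity over subtrees. For $\lambda\le d-\epsilon$, expander mixing excludes the trivial eigenvector and any constant part, so $\Phi$ is centered. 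A process on $\cT_d$ that is invariant and centered and satisfies the eigenvalue equation has its covariance forced by the recursion $\lambda\,\mathrm{Cov}_r=\mathrm{Cov}_{r-1}+(d-1)\mathrm{Cov}_{r+1}$ together with boundedness. Hence the covariance of $\Phi$ is $c\,\Im[G(\lambda,\cT_d)]$ for some $c\ge0$, which makes $\Phi$ a multiple of an eigenvector process in the sense of \Cref{dfn:gwave}. If $|\lambda|>2\sqrt{d-1}$, then $\Im G=0$ and $\Phi\equiv0$, which is the trivial multiple $\sigma=0$.

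The remaining step, which I expect to be the main obstacle, is showing that the entropy inequality forces $\Phi$ to be Gaussian. A Gaussian process with the same covariance maximizes each joint entropy term. The job is therefore to show that for $\Psi^\lambda$ the entropy inequality is saturated asymptotically on large balls. Once saturation holds, any non-Gaussian $\Phi$ with equal covariance loses a strictly positive amount of entropy on some finite set, and that loss violates the inequality. To compute the entropy of $\Psi^\lambda$ on a ball, I would use the representation \eqref{eq:gaussianprocess} at $z=\lambda+\ri\eta$. The log-determinant of the covariance restricted to the ball is expressed through the moments $\langle\Psi^z,G(z)_x\rangle$, which by the Ward identity are explicit. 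Letting $\eta\to0$ and then the ball radius tend to infinity should give equality in the $(d/2,\,d-1)$ Bethe combination. Given saturation, equality in the Gaussian maximum-entropy bound forces every finite marginal of $\Phi$ to be Gaussian, so $\Phi=\sigma\Psi^\lambda$ with $\sigma\le\Var[\Psi^\lambda]^{-1/2}$. Then $\Xi_k^\lambda(\Phi)=0$, and continuity of $\Psi^\lambda$ in $\lambda$ contradicts $\Xi_k^{\lambda_i}\ge\epsilon$ along the subsequence.
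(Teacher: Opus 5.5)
Your reduction matches the route the paper indicates for this statement: the case $\bd=(d)$ of \Cref{thm:mainthm}, combined with \Cref{sec:gen}, \Cref{lem:weakconvergereg} and \Cref{lem:typicalentropy}. That covers compactness, passing the eigenvector equation to the limit, centering, fixing the covariance by the radial recursion, and the typicality entropy inequality. Computing the entropy of $\Psi^\lambda$ through the regularized resolvent is \Cref{lem:gaussiantyp}.

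The gap is the step ``A Gaussian process with the same covariance maximizes each joint entropy term. The job is therefore to show that \ldots\ the entropy inequality is saturated.'' The typicality functional is a \emph{signed} combination of entropies. In the paper, $\Delta_0$ is the star entropy minus $\frac12\sum_{i\sim o}$ of the edge entropies; in your Bethe form, the vertex-ball entropies carry the coefficient $-(d-1)$. The Gaussian maximizes the subtracted terms too. Write each entropy as its Gaussian value minus the relative entropy $D$ to the Gaussian with the same covariance. Then $\Delta_0(\Phi)=\Delta_0(\Psi^\lambda)-\bE_o\big[D_{C_o}-\frac12\sum_{i\sim o}D_{e_{oi}}\big]$. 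So a non-Gaussian $\Phi$ violates typicality only if the non-Gaussianity of its star exceeds $d/2$ times the average non-Gaussianity of its edges, whereas data processing gives only $D_{C_o}\ge D_{e_{oi}}$. Saturation for $\Psi^\lambda$ plus the per-term maximum-entropy principle therefore yields no uniqueness. Showing that $\Psi^\lambda$ is the unique maximizer of this signed functional among eigenvector processes is the real content of the theorem, and your proposal does not address it.

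This is what \Cref{lem:heateq} (after Section 11 of Backhausz--Szegedy) provides. One heats to $\Phi^\lambda+\sqrt t\,\Psi^\lambda$, which has smooth densities and stays typical since $\Psi^\lambda$ is a limit of linear factors of i.i.d.\ by \eqref{eq:gaussianprocess}. In the orthogonal coordinates $\xi_i^{(o)}$, each edge pair $(\xi_i^{(o)},\xi_o^{(i)})$ is a linear image of the star coordinates by \eqref{eq:nextstar}. De Bruijn's identity and Jensen's inequality for Fisher information then give \eqref{eq:entropyderivativeineq}, so $t\mapsto\Delta_0$ is nondecreasing. It tends to the Gaussian value $0$, and typicality gives $\Delta_0\ge0$ at $t=0$, so equality holds for all $t$. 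This yields the conditional independences of \Cref{lem:edgetypes}. Darmois--Skitovich and the entropy power inequality over the finitely many cone types then force Gaussian star marginals. Finally, \Cref{lem:levelcompare} ($\Delta_k\le\Delta_{k-1}$, with equality iff 2-Markov) lifts this to the whole process, a step your ``some finite set'' sentence also skips.

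There are two smaller points. First, your displayed inequality on single-vertex and edge marginals is vacuous for eigenvector processes. Discretized at scale $a$, it carries a surplus $\frac d2\cdot2\log a-(d-1)\log a=\log a$, so you must compare stars with edges (or balls of radius $\ge1$) on the intrinsic subspace, where the $\log a$ terms cancel. Second, for $2\sqrt{d-1}<|\lambda|<d$ the recursion makes the covariance $C_0$ times a nonzero positive-definite spherical function. So $\Im G=0$ does not force $\Phi\equiv0$; you need Friedman's theorem to exclude such $\lambda$.
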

By Friedman's theorem \cite{friedman2008proof}, a regular graph selected uniformly at random has, with high probability, all nontrivial eigenvalues with magnitude at most $2\sqrt{d-1}+o_N(1)$. Thus, \Cref{thm:mainbs} encompasses all almost-eigenvectors separated from the all-ones vector. 

In this paper we consider a broader family of graphs.
Specifically, we study random graphs whose universal cover has \emph{finite cone type}, which is a well studied class of models defined as follows.
\begin{dfn}
    For an infinite tree $\cT$ with oriented edge $(o,x)$, define the cone determined by $(o,x)$ to be the connected component of $\cT \backslash {o}$ containing $x$, rooted at $x$.
The tree $\mathcal T$ is said to have \emph{finite cone type} if there are only finitely many isomorphism classes of cones created in this fashion.
\end{dfn}

 To see the utility of this constraint, note that in general, the value of the Green's function on a tree $\cT$ can be deduced from the Schur complement formula (see \eqref{eq:schurcomp}),
\be\label{eq:finitecones}
G(z,\cT)_{oo}=(-z-\sum_{x\sim o} G(z,\cT^{(o)})_{xx})^{-1};\qquad G(z,\cT^{(o)})_{xx}=(-z-\sum_{\substack{y\sim x\\y\neq o}} G(z,\cT^{(x)})_{yy})^{-1}.
\ee
Therefore, given such a tree, it is straightforward to find the behavior of the spectrum at any given $\lambda$. Specifically, for any tree of finite cone type $\cT$, there is a finite discrete set $\fD\in \bR$  such that away from this set, the spectrum and Green's function are well behaved (see \Cref{lem:conetypegf}).

This tractable analysis has led to this class of trees being a fruitful area of study. Initially, this closed form was used to analyze the random walk on these trees \cite{lyons1990random,nagnibeda2002random}.  More recently, the Anderson model on these trees was shown to have purely absolutely continuous spectrum \cite{keller2012absolutely,keller2013spectral} and the eigenfunctions are known to be quantum ergodic \cite{anantharaman2021quantum,anantharaman2019recent,anantharaman2019quantum}. Moreover, in many examples we can \emph{quantify} the portion of the spectrum that is absolutely continuous, both in the specific settings of the the infinite regular tree and the infinite biregular tree, but also more generally \cite{banks2022point,bordenave2017mean,keller2013spectral}.

To create a family of graphs whose universal cover is a given deterministic tree, we use a generalized random model with a fixed set of vertex types. Instead of allowing an arbitrary degree sequence, we fix a finite number of types of vertices and prescribe how many edges each type sends to every other type.

\begin{dfn}
    For $k\in \bN$ fixed, a matrix $\bd = \{\bd_{ij}\}_{i,j=1}^k \in \bN^{k\times k}$ is called an \emph{expanding degree matrix} if:
\begin{enumerate}
    \item when $\bd_{ij} > 0$, then $\bd_{ji} > 0$;
    \item for any $i,j,\ell\in[k]$, if $\bd_{ji}, \bd_{\ell j}, \bd_{\ell i} \neq 0$, then
    \[
      \frac{\bd_{ij}}{\bd_{ji}} \cdot \frac{\bd_{j\ell}}{\bd_{\ell j}}
      = \frac{\bd_{i\ell}}{\bd_{\ell i}};
    \]
    \item $\bd$ is irreducible;
    \item $\max_i \sum_{j} \bd_{ij} \ge 3$ and $\min_i \sum_{j} \bd_{ij} \ge 2$.
    \end{enumerate}

For $\bd$ an expanding degree matrix, we denote by $\cG(N,\bd)$ the random graph ensemble created by partitioning the vertices into sets $V = V_1 \sqcup V_2 \sqcup \cdots \sqcup V_k$, then sampling a graph uniformly at random under the constraint that for every $i,j\in [k]$, the degree of every vertex $x\in V_i$ to $V_j$ is $\bd_{ij}$.

We denote the universal cover of any graph $\cG(N,\bd)$ as $\cT_\bd$, and we denote by $\fD_\bd\subset \bR$ the minimal set such that for all $\lambda \in \spec(A_{\cT_{\bd}})\backslash \fD_\bd$, for every edge $(x,y)\in \cT_{\bd}$, $|G_{xx}(\lambda)|,|G_{xx}^{(y)}(\lambda)|<\infty$, and $\Im[G_{xx}^{(y)}(\lambda)]> 0$, where $\spec(A)$ is the \emph{spectrum} of $A$, that is the set of values $\lambda$ where $(A-\lambda \dI)$ is not invertible. By \Cref{lem:conetypegf}, $\fD_{\bd}$ is finite.
\end{dfn}

 $\cT_{\bd}$ has finite cone type, as the cone is completely determined by entry $\bd_{ij}$ corresponding to the chosen edge $(o,x)$. Also, the size of the sets $|V_i|$ are completely determined by $\bd$, as $|V_i|/|V_j|=\bd_{ji}/\bd_{ij}$ when these entries are nonzero, and by assumption, $\bd$ is irreducible.  In broad terms, our conditions on $\bd$ are as general as possible. The first two ensure that $\bd$ is compatible with a configuration model. The third guarantees that the resulting random graph is connected. The fourth imposes \emph{local expansion}: in the universal cover, every vertex has infinitely many descendants, ensuring that the graph size tends to infinity and spectral chaos can occur. For instance, in the $d$-regular model we require $d\ge 3$ to observe chaotic behavior; similarly, bipartite biregular graphs $\cG(N,d_1,d_2)$ fit into this framework under our constraints as long as $\max\{d_1,d_2\}\geq 3$ and $\min\{d_1,d_2\}\geq 2$.

This framework contains several ubiquitous graph families. Important examples include the following:
\begin{enumerate}[(1)]
    \item \emph{Random $d$-regular graphs} are of the form $\cG(N,d)$.
    \item \emph{Random bipartite biregular graphs} have the form $V = V_1 \sqcup V_2$ and $\bd=\left[\begin{array}{cc}
        0 & d_1 \\
        d_2 & 0 
    \end{array}\right]$. We denote this model by $\cG(N,d_1,d_2)$, where $d_1$ and $d_2$ are the degrees of vertices in $V_1$ and $V_2$, respectively.
    \item 
     \emph{Random lifts of a fixed graph} correspond to the special case where the matrix $\bd$ is the adjacency matrix of an undirected multigraph. The chaotic behavior of the spectrum in lift models is itself an active area of study \cite{bordenave2015new,bordenave2019eigenvalues,chen2024new}.
\end{enumerate}

Although bipartite biregular graphs arise from a configuration model with fixed degrees, they do not arise as a sequence of lifts of a fixed graph when $\bd_{ij} \neq \bd_{ji}$. Their spectral statistics are well studied, and several general results on spectral chaos are known \cite{brito2022spectral,dumitriu2023global,zhu2023second}, motivated in part by applications to coding theory and to the singular vectors of random $0$--$1$ matrices.

For $\cT_{\bd}$ of finite cone type, it is straightforward to characterize the set $\fD_{\bd}$, and in fact $\spec(A_{\cT_\bd})\backslash \fD_\bd$ corresponds to a nontrivial portion of the spectrum for any $\cT_{\bd}$ (see \cite[Section 1.6]{bordenave2017mean} and \cite[Theorem 3.1]{banks2022point}). Intuitively, $\spec(A_{\cT_\bd})\backslash \fD_\bd$ 
is where we would expect chaotic behavior, as it avoids the point spectrum, and also guarantees that spectral statistics on each subtree are nontrivial. For example, by \cite[Section 4.2]{anantharaman2019recent}, $\spec(A_{\cT_\bd})\backslash \fD_\bd$ is exactly the region where we would expect quantum ergodicity, another statistic of eigenvector delocalization.

Our first main result is about the most general model. For this generality we give up control over individual eigenvectors and instead study a randomly selected eigenvector from a small spectral window.

\begin{dfn}\label{dfn:gaussianwindow}
Let $\cG$ be a graph with vertex set $[N]$, and let 
$\{\psi_i\}_{i\in[N]}$ be an orthonormal eigenbasis of $A_\cG$ with
eigenvalues $\{\lambda_i\}_{i\in[N]}$. For $\lambda\in\bR$ and $\epsilon>0$, choose an eigenvalue 
$\lambda_i$ uniformly at random (with multiplicity) from
$\spec(A_\cG)\cap[\lambda-\epsilon,\lambda+\epsilon]$, and let
$\psi_i$ be the associated unit eigenvector. Define the labeling
\[
f^\cG_{\lambda,\epsilon} := \sqrt{N}\,\psi_i .
\]
Also,  define the random almost eigenvector
\[
\wt{\psi}_{\lambda,\epsilon}^{\cG}
\deq
\frac1{\sqrt{|\{\lambda_i\in[\lambda-\epsilon,\lambda+\epsilon]\}|}}
\sum_{\lambda_i\in[\lambda-\epsilon,\lambda+\epsilon]} g_i\,\psi_i,
\]
where the $g_i$ are independent standard Gaussian random variables.

\end{dfn}

Our first result is that the above distributions have Gaussian statistics.

\begin{thm}\label{thm:GWconverge}
For any expanding degree matrix $\bd$, any $k\in\bN$, and any sequence $\epsilon_N=o_N(1)$ that decreases sufficiently slowly, with probability $1-o_N(1)$ over $\cG\sim\cG(N,\bd)$, the following holds: for every $\lambda$ such that $\spec(A_{\cT_{\bd}})\cap[\lambda-\epsilon_N,\lambda+\epsilon_N]$ avoids the finite set $\fD_\bd$, we have $\Xi_k^\lambda\!\left(\distr^*(f^\cG_{\lambda,\epsilon_N})\right)<\epsilon_N$.

The same conclusion holds if $f^\cG_{\lambda,\epsilon_N}$ is replaced by the almost eigenvector $\sqrt{N}\,(\wt{\psi}_{\lambda,\epsilon}^{\cG}+v_N),$ 
where $v_N$ is any sequence satisfying
$\|v_N\|\le\delta_N$ for sufficiently small $\delta_N
>0$.
\end{thm}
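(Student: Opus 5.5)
We argue in the Backhausz--Szegedy framework. Rather than attacking Gaussianity directly, we first show that every subsequential local limit of the labelings is an eigenvector process in the sense of \Cref{dfn:gwave} on $\cT_\bd$ with parameter $\lambda$. We then prove a rigidity statement: any \emph{typical} eigenvector process on $\cT_\bd$ (a limit of labelings of $\cG(N,\bd)$) at $\lambda\in\spec(A_{\cT_\bd})\setminus\fD_\bd$ is a multiple $\sigma\Psi^\lambda$ of the Gaussian wave. The conclusion $\Xi_k^\lambda<\epsilon_N$ then follows by compactness. If it failed along a subsequence, tightness (the labelings have unit second moment) would produce a subsequential limit at positive L\'evy--Prokhorov distance from every $\sigma\Psi^\lambda$ with $\sigma\le\Var[\Psi^\lambda]^{-1/2}$. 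The variance constraint on $\sigma$ comes from Fatou. Choosing $\epsilon_N$ slowly decreasing then gives a diagonal argument, uniform over $\lambda$ in compact subsets of $\spec\setminus\fD_\bd$. Uniformity uses continuity of $G(\lambda,\cT_\bd)$ there, which follows from \Cref{lem:conetypegf} and \eqref{eq:finitecones}.

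\textbf{Step 1 (covariance of limits).} For a random eigenvector from the window $[\lambda-\epsilon_N,\lambda+\epsilon_N]$, the averaged product $\frac{1}{N}\sum_x f(x)f(y(x))$ over pairs at a fixed nonbacktracking path type is a normalized spectral measure of the window. We compare it, via the local law for $\cG(N,\bd)$ at mesoscopic scale $\eta_N\gg\epsilon_N$, with $\frac{1}{\pi\,\Im m}\Im G(\lambda+\ri\eta_N,\cT_\bd)_{xy}$. The ratio of window sizes to $\eta_N$ is what dictates how slowly $\epsilon_N$ must decrease. Letting $\eta_N\to0$ and using continuity off $\fD_\bd$ shows that the limiting covariance is proportional to $\Im G(\lambda,\cT_\bd)_{xy}$.

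The limit is also an eigenvector process in the strong sense: $(A-\lambda)\Phi=0$ almost surely, since $\|(A_\cG-\lambda)f\|^2/N\le\epsilon_N^2$. It inherits automorphism invariance on the types through unimodularity. The almost-eigenvector $\wt\psi$ version is identical, because $\|v_N\|\le\delta_N$ perturbs the empirical local law by $o(1)$ in L\'evy--Prokhorov.

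\textbf{Step 2 (entropy / typicality).} Following Backhausz--Szegedy, we use the first-moment counting bound for $\cG(N,\bd)$. A typical process must satisfy an entropy inequality: for every finite subtree pattern, the local entropy functional (vertex entropy minus edge entropy, weighted by the type densities $|V_i|/N$) must be nonnegative. For a Gaussian process this becomes a determinant inequality. Since Gaussians maximize entropy given covariance, the inequality holds for $\Phi$ with slack exactly equal to the entropy deficit $D(\Phi\|\Phi^{\mathrm{Gauss}})$ on the ball.

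\textbf{Step 3 (main obstacle).} We must show that the Gaussian wave itself has zero entropy slack asymptotically, so any non-Gaussian typical process would violate the inequality. In the regular case this is done through symmetry. Here we instead use the representation \eqref{eq:gaussianprocess} together with the reduction to the variables $\langle\Psi,G_x\rangle$. Conditioning on the boundary of a ball, the entropy on the tree splits into conditional entropies along edges. By the Schur complement recursion \eqref{eq:finitecones}, each conditional variance is expressed through $\Im G^{(y)}_{xx}$ for finitely many cone types. The entropy functional then telescopes to a sum over types that vanishes by condition (2) on $\bd$ (the balance $|V_i|\bd_{ij}=|V_j|\bd_{ji}$).

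Given zero slack for $\Psi^\lambda$, a typical $\Phi$ with the same covariance and zero-mean conditional structure must have $D=0$ on every ball, hence be Gaussian, hence equal $\sigma\Psi^\lambda$. I expect Step 3 to be the delicate part. The pitfall is that nondegeneracy requires $\Im G^{(y)}_{xx}>0$ on every cone, which is exactly the reason for excluding $\fD_\bd$. I would first verify the cancellation carefully in the biregular case before attempting general $\bd$.
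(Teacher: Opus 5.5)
Your outer framework matches the paper's: limits are eigenvector processes, limits are typical, and typical eigenvector processes are multiples of $\Psi^\lambda$. The rigidity step, however, has a genuine gap. In Step~2 you assert that, because Gaussians maximize entropy for a given covariance, the entropy functional of $\Phi$ falls short of the Gaussian's by exactly the relative entropy $D(\Phi\,\|\,\Phi^{\mathrm{Gauss}})$ on the ball. This is false, because the functional is the star entropy minus \emph{half the sum} of the edge entropies, so the edge terms enter with a negative sign. Write $\mathcal K(X)$ for the relative entropy of $X$ to the Gaussian with the same covariance (the $\log a$ terms cancel as usual). What one actually gets is
\[
\Delta_0(\Phi)=\Delta_0(\Psi^\lambda)-\bE_o\Big[\mathcal K(\Phi_{C_o})-\tfrac12\sum_{i\sim o}\mathcal K(\Phi_{e_{oi}})\Big].
\]
Data processing gives only $\mathcal K(\Phi_{e_{oi}})\le\mathcal K(\Phi_{C_o})$, and vertices of degree $d_o\ge 3$ occur, so the bracket need not be nonnegative. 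Concretely, take a star law $\Phi|_{C_o}=\Phi_o\,a+Z$ with $\Phi_o$ non-Gaussian and $Z$ Gaussian and independent of $\Phi_o$. Choose $a$ and $\Cov(Z)$ as the regression vector and conditional covariance of the Gaussian wave, so the covariance and the eigenvector constraint are respected. Every edge $e_{oi}$ contains $\Phi_o$, so $\mathcal K(\Phi_{e_{oi}})=\mathcal K(\Phi_{C_o})=\mathcal K(\Phi_o)$, and the bracket equals $(1-d_o/2)\,\mathcal K(\Phi_o)<0$. Locally, this non-Gaussian star law has larger $\Delta_0$ than the Gaussian. It is excluded only by gluing consistency across the tree, which is exactly the paper's Type B case. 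Your concluding claim (``must have $D=0$ on every ball, hence be Gaussian'') therefore has no support, and the uniqueness half of the theorem is missing.

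The missing idea is the heat-flow argument of \Cref{lem:heateq}, combined with \Cref{lem:levelcompare}.
\begin{enumerate}
\item Reduce to smooth processes via $\frac1{\sqrt2}(\Phi+\Psi^\lambda)$, which remains typical.
\item Use de Bruijn's identity and the marginal (Jensen) bound on Fisher information, averaged over the unimodular root, to show that $t\mapsto\Delta_0(\Phi+\sqrt t\,\Psi^\lambda)$ is nondecreasing with limit $\Delta_0(\Psi^\lambda)=0$. Typicality then forces equality for every $t$.
\item Equality yields the dichotomy of \Cref{lem:edgetypes}. Darmois--Skitovich and an entropy-power argument over the finitely many cone types turn this into Gaussianity on stars, with \Cref{lem:smallcoefficients} excluding degenerate decompositions along paths of Type B vertices.
\item Finally, $\Delta_k\le\Delta_{k-1}$, with equality iff the process is 2-Markov, passes from stars to balls.
\end{enumerate}
Your Step~3 (zero slack for $\Psi^\lambda$ via orthogonal cone variables and mass transport) is a plausible alternative to the paper's $\eta\to0$ determinant computation, but it covers only the easier half.

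There are two lesser issues. First, in Step~1 the scales are reversed: to resolve the spectral mass of a window of width $\epsilon_N$ from $\Im G(\cdot+\ri\eta_N)$ you need $\eta_N\ll\epsilon_N$, not $\eta_N\gg\epsilon_N$. Moreover, no local law for general $\cG(N,\bd)$ is needed, since slow decay of $\epsilon_N$ lets you use moment convergence from local weak convergence at fixed $\epsilon$, as in \Cref{lem:weakconverge}. Second, you never show that subsequential limits are typical in the almost-sure sense the definition requires. The paper obtains this from concentration of the set of achievable local statistics, via switchings, McDiarmid's inequality, and Borel--Cantelli.
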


   Our result can be viewed as reasoning about the  ``noisy'' distribution of an eigenvector: rather than studying a specific eigenvector $\psi$, we examine the distribution of an eigenvector sampled from an arbitrarily small spectral window. The uniform sampling may be replaced by any distribution that is sufficiently flat over the window (every eigenvector has probability $O(1/N)$ for fixed $\epsilon$).
   
   It is not necessarily true that in this generalized setting every $\lambda\in \spec(\cT_\bd)\backslash \fD_\bd$ has eigenvectors with Gaussian statistics. For example, if we take $\cG(N,\bd)$ to be a random lift, there can be planted eigenvalues with eigenvectors with the same distribution as one in the base graph. For example, every lift of the 4 clique $K_4$ has an (unnormalized) eigenvector of eigenvalue $-1$ where a quarter of entries are $3$, and three quarters of entries are $-1$, despite the fact that $\fD_{\bd}=\emptyset$ for $\bd=d\geq 3$. 

The fact that we must take an average over a window stems from the lack of symmetry in general trees of finite cone type: there is not enough structure to guarantee that the empirical covariance converges to that of the eigenvector process. To have such structure, it is sufficient to impose \emph{radial symmetry}, meaning that if $o$ is a root, then re-rooting the graph at vertices $x$ and $y$ yield isomorphic rooted graphs whenever $\mathrm{dist}(o,x)=\mathrm{dist}(o,y)$. This symmetry condition has enabled other sharp spectral results on infinite trees \cite{keller2013spectral}. In particular, it holds for random biregular graphs, allowing us to prove an analogue of \Cref{thm:mainbs}.

   \begin{thm}\label{thm:GWconverge2}
For any $\epsilon>0$, $k\in\bN$, and degrees $(d_1,d_2)$ with $d_1\ge 3$ and $ d_2\geq 2$, there exist $N_0$ and $\delta>0$ such that for all $N\ge N_0$, with probability at least $1-\epsilon$ over the randomly selected bipartite graph $\cG_N\sim\cG(N,d_1,d_2)$, the following holds: for every $\psi$ with $\|\psi\|=1$ satisfying 
$\|(A_{\cG_N}-\lambda)\psi\|\le \delta$ for $|\lambda|\leq \sqrt{d_1d_2}-\epsilon$  and, if $d_1\neq d_2$, $|\lambda|\geq \epsilon$, we have
\[
\Xi_k^\lambda\!\left(\distr^*(\sqrt{N}\psi)\right) < \epsilon.
\]
\end{thm}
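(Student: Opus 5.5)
We will follow the Backhausz--Szegedy strategy, with the Gaussian-wave decomposition \eqref{eq:gaussianprocess} replacing regular-tree symmetry. We argue by contradiction and compactness.

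Suppose the statement fails for some $\epsilon,k$. Then along a subsequence there are graphs $\cG_N\sim\cG(N,d_1,d_2)$ (chosen from events of probability bounded below), values $\lambda_N\to\lambda$ in the allowed range, and almost eigenvectors $\psi_N$ with $\|(A-\lambda_N)\psi_N\|\to0$ such that $\Xi_k^{\lambda}(\distr^*(\sqrt N\psi_N))\ge\epsilon$. By tightness (the second moments equal $1$), the labeled local laws have a subsequential limit $\Phi$, a unimodular process on $\cT_{d_1,d_2}$ satisfying $(A_{\cT}-\lambda)\Phi=0$ almost surely, with $\bE\Phi_o^2\le 1$. Here the variance is averaged over the two vertex types with weights $d_2/(d_1+d_2)$ and $d_1/(d_1+d_2)$.

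\textbf{Step 1 (covariance).} We first show that $\Phi$ has covariance proportional to $\Im G(\lambda,\cT)$. Radial symmetry plus unimodularity mean that $\bE[\Phi_x\Phi_y]$ depends only on $\mathrm{dist}(x,y)$ and on the type of $x$. The eigenvalue equation yields a three-term recursion in the distance, with separate coefficients for the two types, so the covariance sequence lies in a two-dimensional solution space for each type pair. The spherical functions of $\cT_{d_1,d_2}$ at $\lambda$ give one solution, namely $\Im G_{xy}$. The other solution must be excluded. For this we use the finite graph: by the Kesten--McKay-type bound on random biregular graphs (no short cycles locally), the covariance at distance $r$ equals $\langle\psi,P_r\psi\rangle$ for a nonbacktracking polynomial $P_r$, and this is bounded uniformly in $r$ by $\|P_r\|$ restricted to $|\lambda|\le\sqrt{d_1d_2}-\epsilon$. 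That bound rules out the exponentially growing solution. We also need the ratio between the type-$1$ and type-$2$ variances to be pinned down. This follows from $\langle\psi,A\psi\rangle\approx\lambda$ and the bipartite structure, which forces $\|\psi|_{V_1}\|^2/\|\psi|_{V_2}\|^2$ to match that of $\Im G$. This step requires $\lambda\ne0$ when $d_1\ne d_2$, which explains the hypothesis $|\lambda|\geq\epsilon$. Consequently $\Phi=\sigma\Phi'$ with $\Phi'$ an eigenvector process and $\sigma\le\Var[\Psi^\lambda]^{-1/2}$.

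\textbf{Step 2 (entropy).} The remaining task is to show that any typical eigenvector process equals the Gaussian wave. Here we use the entropy inequality of Backhausz--Szegedy: typical processes satisfy the tree-entropy bound $H(B_r)-\tfrac{\bar d}{2}H(\text{edge})\ge 0$ asymptotically, where $\bar d$ is the mean degree. For Gaussian processes this quantity is maximized at fixed covariance, with equality only for Gaussians. We plan to show that the Gaussian wave attains equality, i.e.\ that its Gaussian entropy deficit vanishes. This is the computation the introduction advertises: it reduces, via \eqref{eq:gaussianprocess} and the Ward identity, to evaluating $\Var\langle\Psi^z,G(z,\cT)_x\rangle$ for $z=\lambda+\ri\eta$, and then letting $\eta\to0$. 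Working at $z\in\bC^+$, where everything is summable, and passing to the limit is the key technical device. Because $\Phi'$ has the same covariance, the Gaussian maximization makes any deviation from Gaussianity produce a strictly negative entropy deficit, which contradicts typicality. Therefore $\Phi'\overset d=\Psi^\lambda$, and this contradicts $\Xi_k^\lambda\ge\epsilon$.

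\textbf{Main obstacle.} We expect the hardest part to be the entropy equality computation for $\Psi^\lambda$ on the biregular tree. It requires controlling the $\eta\to0$ limit of log-determinants of $\Im G$ restricted to balls and edges, using the Schur complement \eqref{eq:schurcomp} and the finite-cone recursion \eqref{eq:finitecones}. A secondary difficulty is the step that excludes the second covariance solution, which needs a uniform nonbacktracking-operator norm bound for $\cG(N,d_1,d_2)$.
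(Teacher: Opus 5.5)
\textbf{The gap is in Step 2.} You claim that, among processes with a given covariance, the star-minus-edge functional $\Delta_0=\bE_o[\bH(\textnormal{star})-\frac12\sum_{i\sim o}\bH(\textnormal{edge}_{oi})]$ is uniquely maximized by the Gaussian, and you justify this by the maximum-entropy property of Gaussians. That property does not give it. $\Delta_0$ is a \emph{difference} of entropies, and the subtracted edge entropies are also bounded above by their Gaussian values. The two bounds therefore point in opposite directions and give no comparison between $\Delta_0(\Phi')$ and $\Delta_0(\Psi^\lambda)$.

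Proving that the Gaussian wave is the unique maximizer is the real content of the argument (\Cref{lem:heateq}, adapting Section 11 of Backhausz--Szegedy). It needs several ideas that are missing from your plan:
\begin{enumerate}
\item Smooth $\Phi$ to $\Phi+\sqrt t\,\Psi$, with $\Psi$ an independent Gaussian wave. This preserves typicality because, by \eqref{eq:gaussianprocess}, $\Psi$ is a limit of factor-of-i.i.d.\ processes.
\item Use de Bruijn's identity together with a Jensen inequality for conditional Fisher information, written in the Green's-function coordinates $\xi^{(x)}_y$. This shows $t\mapsto\Delta_0$ is nondecreasing.
\item Combine $\Delta_0\ge0$ for typical processes with $\Delta_0$ vanishing for the Gaussian wave (in the $\eta\to0^+$ sense of \Cref{lem:gaussiantyp}). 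This forces equality for all $t$, hence conditional independence across every edge, i.e.\ the dichotomy of \Cref{lem:edgetypes}.
\item Apply Darmois--Skitovich and an entropy-power argument over the finitely many cone types to get Gaussian star marginals.
\item Use \Cref{lem:levelcompare} to pass from stars to the whole process.
\end{enumerate}
The step you single out as the main obstacle, $\Delta_k=0$ for $\Psi^\lambda$, is the easy part in the paper: a short Ward-identity computation at $\lambda+\ri\eta$. The uniqueness you take for granted is where the work lies.

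\textbf{Step 1 reaches the right conclusion by a flawed route.} There is no second solution to exclude. Automorphism invariance and the eigenvalue equation at the root give $\lambda\,\bE[\Phi_o^2]=\deg(o)\,\bE[\Phi_o\Phi_x]$ for $x\sim o$. This fixes the distance-one covariance, and the sphere-sum recursion then determines every covariance (this is \Cref{lem:weakconvergereg}). Your exclusion via nonbacktracking norm bounds would fail anyway. For $\lambda$ in the bulk, the two-step transfer matrix of that recursion has complex-conjugate eigenvalues of equal modulus $\sqrt{(d_1-1)(d_2-1)}$, so neither solution grows relative to the other.

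Your type-ratio argument, which amounts to $\lambda\|\psi|_{V_1}\|^2\approx\lambda\|\psi|_{V_2}\|^2$ up to $O(\delta)$, also degenerates for $|\lambda|\lesssim\delta$, whether or not $d_1=d_2$. When $d_1=d_2$, vectors $(u,0)$ built from left singular vectors with singular value at most $\delta$ are $\delta$-approximate eigenvectors at $\lambda=0$ that vanish on a whole side. So your argument does not explain why $|\lambda|\ge\epsilon$ is imposed only for $d_1\neq d_2$. As written, your Step 1 does not cover small $|\lambda|$ when $d_1=d_2$, which the statement includes.
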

Our condition on $\lambda$ avoids the point spectrum at $0$ and the trivial eigenvalues $\pm \sqrt{d_1d_2}$. In fact, by \cite[Theorem~4]{brito2022spectral},  our condition is equivalent to requiring that for $d_1\geq d_2$, 
$\sqrt{d_1-1}-\sqrt{d_2-1}-o_N(1) \le |\lambda| \le \sqrt{d_1-1}+\sqrt{d_2-1}+o_N(1)$. Moreover, the same idea yields simplifications in the proof of \Cref{thm:mainbs}.

   Our work can be compared to \cite{he2025gaussian}, where the authors show convergence to the Gaussian wave for the random regular graph in the specific case where the eigenvalue is one of the $k$ smallest or $k$ largest for fixed $k$ (therefore making it a statement for eigenvalues satisfying $||\lambda|-2\sqrt{d-1}|=N^{-2/3+o_N(1)}$). In fact, they do not need to take the infimum over all $\sigma$ in \eqref{eq:gaussdiff}, but rather can deterministically set $\sigma=\Var(\Psi^z)^{-1/2}$, thereby showing full $\ell_2$ delocalization. They do this by applying the local law universality of the Green's function for random regular graphs shown in \cite{huang2024ramanujan}. In contrast, our approach does not use any local Green's function law; instead, we show that the Gaussian process in \eqref{eq:gaussianprocess} has an easily analyzable entropy for any tree.

We also remark that \eqref{eq:gaussianprocess} immediately implies that the Gaussian wave is the limit of a linear factor of i.i.d. process for any graph $\cG$ and any $\lambda\in\bR$ lying in the purely absolutely continuous spectrum of $A_{\cG}$ (by adjusting the variance of $\xi_x$, an analogous statement holds when $\lambda$ is in the point spectrum). This extends \cite[Theorem~4]{harangi2015independence}, which established the result under the additional assumption of vertex transitivity as a tool to study the independence ratio on infinite graphs. We therefore expect \eqref{eq:gaussianprocess} to be of independent interest.
\subsection{Key entropy inequality}
The key observation in \cite{backhausz2019almost} is that the empirical distribution of an eigenvector in the configuration model is a \emph{typical process}, meaning that it defines a labeling which, in the weak topology, must be close to a labeling realized by a randomly selected graph, with high probability. 

\begin{dfn}
Let $Y$ be a separable, metrizable topological space, and let $\mu$ be a probability distribution on processes in $Y^{V(\cT_{\bd})}$. We say $\mu$ is \emph{typical} if there exists an increasing sequence $N$ such that for $\cG_N\sim\cG(N,\bd)$, with probability $1$, there are labelings $f_N:V(\cG_N)\to Y$ for which $\distr^*(f_N)$ converges to $\mu$ in the weak topology.
\end{dfn}

Our main theorem is the following.

\begin{thm}\label{thm:mainthm}
For any expanding degree matrix $\bd$ and any $\lambda\in \spec(\cT_{\bd})\backslash \fD_{\bd}$, the only typical eigenvector process on $\cT_\bd$ at $\lambda$ is the Gaussian wave $\Psi^\lambda(\cT_\bd)$.
\end{thm}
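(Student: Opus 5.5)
We follow the entropy strategy of Backhausz and Szegedy, adapted to the Green's function decomposition \eqref{eq:gaussianprocess}. Let $\Phi=\Phi^\lambda(\cT_\bd)$ be a typical eigenvector process at $\lambda\in\spec(\cT_\bd)\backslash\fD_\bd$. The first step is the standard entropy inequality for typical processes in the configuration model $\cG(N,\bd)$. For each finite ball or edge-neighborhood $F$ of the root, typicality forces a counting inequality. Roughly, the number of labelings realizing $\distr^*(f_N)\approx\mu$ must be at least the inverse probability that a random configuration supports them. This gives a lower bound on a linear combination of differential entropies of marginals of $\Phi$ on vertex stars and edges, weighted by the type densities $|V_i|/N$ and the entries $\bd_{ij}$. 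Concretely, we expect an inequality of the form
\[
\sum_i \alpha_i\, h(\Phi|_{B_1(o_i)}) - \sum_{i,j}\tfrac{\alpha_i\bd_{ij}}{2}\, h(\Phi_{o_i},\Phi_{x_j})\ \ge\ \sum_i\alpha_i(1-\deg_i/2)\,h(\Phi_{o_i}),
\]
together with its analogues for larger edge-connected subtrees. Here $\alpha_i$ is the density of type $i$. For a Gaussian process the inequality holds with equality.

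The second step uses the fact that entropy is maximized by the Gaussian with the same covariance. Replacing each marginal entropy on the left by its Gaussian counterpart, the entropy inequality becomes an inequality between $\Phi$ and $\Psi^\lambda$. The goal is to exhibit a combination of marginals for which the Gaussian attains equality. Typicality then forces the marginals of $\Phi$ on each finite subtree to be Gaussian. The key is the reduction announced in the introduction. We compute the Gaussian entropies through the variables $\langle\Psi^\lambda,G(\lambda,\cT_\bd)_x\rangle$, which by \eqref{eq:gaussianprocess} and the Ward identity are explicit in terms of the Schur complement data \eqref{eq:finitecones}. Because $\cT_\bd$ has finite cone type, these quantities reduce to the finitely many values $G^{(y)}_{xx}(\lambda)$. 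Away from $\fD_\bd$ all of these are finite, with $\Im G^{(y)}_{xx}>0$ (\Cref{lem:conetypegf}), so every covariance matrix is nondegenerate and every log-determinant is finite. We would verify the equality case by checking that, conditionally on the root star, the Gaussian wave on the disjoint cones is a Markov-type extension. This conditional independence is exactly what the recursion \eqref{eq:finitecones} encodes, and it makes the entropy of a subtree telescope.

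The third step passes from Gaussian marginals to the full statement. Equality in the Gaussian maximum-entropy inequality on every finite subtree implies that all finite-dimensional marginals of $\Phi$ are Gaussian. Being centered, each marginal is then determined by the covariance $\Im G$, so $\Phi\overset{d}{=}\Psi^\lambda$.

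The main obstacle is step two. We must show that the Gaussian wave is extremal for the specific entropy functional produced by typicality in the non-regular, non-transitive tree $\cT_\bd$, where symmetry no longer identifies the covariance on stars. We would first attempt a direct proof that conditioning $\Psi^\lambda$ on a vertex $x$ and its parent leaves the cones independent, with explicit covariance given by $\Im G^{(x)}$. This would yield the telescoping identity for the Gaussian entropy. Any remaining slack would be absorbed by the expansion condition (4) on $\bd$, which keeps the leading coefficient in the inequality positive.
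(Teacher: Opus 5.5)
There is a genuine gap in your step 2, and it sits at the heart of the theorem. Typicality controls a \emph{signed} functional, $\Delta_0=\bE_o[\bH(\mu_{C_o})-\frac12\sum_{i\sim o}\bH(\mu_{e_{oi}})]$ (\Cref{lem:typicalentropy}), in which the edge entropies are subtracted. The maximum-entropy principle does give $\bD(\Phi|_{C_o})\le\bD(\Psi^\lambda|_{C_o})$ and $\bD(\Phi|_{e_{oi}})\le\bD(\Psi^\lambda|_{e_{oi}})$ term by term. But the second bound points the wrong way for the subtracted terms. So replacing each marginal by its Gaussian counterpart yields no comparison between $\Delta_0(\Phi)$ and $\Delta_0(\Psi^\lambda)$, let alone uniqueness of the maximizer. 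Nothing else in your proposal supplies this comparison, and saying that slack is ``absorbed by the expansion condition'' is not a mechanism.

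The paper proves it (\Cref{lem:heateq}) by a heat-flow argument:
\begin{enumerate}
\item It reduces to smooth processes via $\frac1{\sqrt2}(\Phi+\Psi^\lambda)$. This stays typical because $\Psi^\lambda$ is a limit of factor of i.i.d.\ processes by \eqref{eq:gaussianprocess}.
\item Along $\Phi+\sqrt t\,\Psi^\lambda$, de Bruijn's identity turns $\frac{\rd}{\rd t}\Delta_0$ into a difference of Fisher informations. These are written in the orthogonal Green's-function coordinates $\xi_i^{(o)},\xi_o^{(i)}$, linked by \eqref{eq:nextstar}.
\item A Jensen inequality shows the star term dominates the averaged edge terms, so $\Delta_0$ is nondecreasing along the flow.
\item Typicality then forces equality for every $t$, which gives a conditional independence statement on every edge.
\item \Cref{lem:edgetypes}, the Darmois--Skitovich theorem, \Cref{lem:smallcoefficients} and the entropy power inequality on the finite cone-type system upgrade this to Gaussianity of the star marginals.
\end{enumerate}

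There are two further problems. First, the inequality you write in step 1 is not the right one. The extra term $\sum_i\alpha_i(1-\deg_i/2)h(\Phi_{o_i})$ is not scale invariant: after discretizing at scale $a$ it behaves like $(1-\deg_i/2)\log a\to-\infty$ when $\deg_i\ge 3$. Meanwhile the left side stays bounded, since the star marginal of an eigenvector process lives on a $\deg_i$-dimensional subspace; this is why the paper projects onto $\Pi(\Gamma)$. Your inequality is therefore vacuous, and the Gaussian does not attain equality in it. The correct statement is simply $\Delta_0\ge 0$.

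Second, your step 3 presupposes the step-2 comparison on every finite subtree. The paper only needs the maximization for $\Delta_0$. Once the star marginals are Gaussian, it combines \Cref{lem:levelcompare} ($\Delta_k\le\Delta_{k-1}$, with equality for all $k$ iff the process is 2-Markov) with $\lim_{\eta\to0}\Delta_k(\mu^\eta)=0$ for the Gaussian wave (\Cref{lem:gaussiantyp}). This shows a non-2-Markov $\Phi$ would violate $\Delta_k\ge 0$ for some $k$.

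Your plan to verify the Gaussian equality case by a Markov/telescoping computation is a reasonable alternative to the paper's $\eta$-deformation for \Cref{lem:gaussiantyp}. However, that is the easy direction.
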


The reduction of \Cref{thm:GWconverge} and \Cref{thm:GWconverge2} to \Cref{thm:mainthm} follows from \cite[Proposition~5.1]{backhausz2019almost}; for completeness, we reproduce the proof in \Cref{sec:gen}. The remainder of the paper is devoted to proving \Cref{thm:mainthm}. 

\section{Preliminaries}
\subsection{Notation and conventions}
We list notation that is used throughout the paper. We fix expanding degree matrix $\bd$ and the corresponding universal cover $\cT_\bd$. We denote by $V_\bd$ the vertex set of $\cT_\bd$.  For vertex $u\in V_{\bd}$, we define $\delta_u$ to be the indicator vector on the vertex $u$. For a set of vertices $\Gamma\subset V_\bd$, we define the boundary $\partial \Gamma\subseteq \Gamma$ to be the subset of vertices adjacent to $\cT_{\bd}\backslash \Gamma$.

For $z\in \overline{\bC^{+}}$, in our analysis of the Green's function $G^z\deq G(z,\cT_{\bd})$, we will often consider the function $G_x^z(y)\deq G^z(y,x)$, which is the column of the kernel of $G^z$. More generally, we have $G^z_{\Gamma}\in \bC^{V_{\bd}\times |\Gamma|}$. We will drop the dependence on $z$ when $z$ is clear from the context. We now introduce notation to consider the probability measure on marginals. 

\begin{dfn}
For any separable, metrizable, topological space $Y$ and probability distribution $\mu$ on $Y^{V_{\bd}}$, define $\mu_\Gamma$ to be the marginal distribution on $\Gamma\subset V_\bd$. Under the motivation of our decomposition \eqref{eq:gaussianprocess}, for the probability measure $\mu$ associated with the process $\Phi^\lambda$, we then define $\mu^{\eta}$ to be the probability measure on $\bR^{V_\bd}$, where $u\in V_{\bd}$ is labeled with $-\eta\ri\langle \Phi^\lambda,G^{\lambda+\ri\eta}_u\rangle$.
\end{dfn}

Following \cite{backhausz2019almost}, we reduce the problem to an entropy inequality. Some care is required in defining entropy for the following reasons:

\begin{enumerate}[(1)]
    \item 
As any distribution must satisfy the eigenvector equation at every vertex, the distribution on a connected subgraph $\Gamma$ with $\Gamma\backslash\partial \Gamma\neq \emptyset$ is not full rank.
    \item 
We will want an entropic inequality associated with an arbitrary distribution, whereas Shannon entropy is defined for discrete distributions, and differential entropy for smooth distributions. 
\end{enumerate}

For (1), rather than calculate entropy in the full ambient space, we project onto the intrinsic subspace.  For (2), we discretize the process then take the Shannon entropy of the resulting discrete distribution. The discretization is performed on the full-rank $|\partial \Gamma|$-dimensional subspace rather than coordinatewise, so that entropy reflects the true dimension of the distribution. To do this, we consider a basis that satisfies the eigenvector equation at each vertex in the interior.

\begin{dfn}[Discretization at scale $a$]
For any $\Gamma\subset V_\bd$, we fix an orthonormal basis $\Pi(\Gamma)=(\chi_1,\ldots,\chi_{|\partial\Gamma|})$ of vectors orthogonal to all vectors of the form $\lambda \delta_u-\sum_{v\sim u}\delta_v$ for $u\in \Gamma\backslash \partial \Gamma$. Note the law of $\mu_{\Gamma}$ is determined by
\[
\big(\langle \Phi^\lambda,\chi_1\rangle,\ldots,\langle\Phi^\lambda, \chi_{|\partial\Gamma|}\rangle\big).
\]
Therefore, for $a>0$, we define $\mu_{\Gamma,a}$ to be the discrete probability measure of
\[
\frac1{a}\big(\lfloor a\langle \Phi^\lambda,\chi_1\rangle\rfloor,\ldots,\lfloor a\langle \Phi^\lambda,\chi_{|\partial\Gamma|}\rangle\rfloor\big).
\]
As the quantity we are chiefly interested in is entropy, which is invariant under unitary transformation, it does not matter which basis of vectors we choose.
\end{dfn}

Specifically, the quantity we will be most interested in is the difference of entropies of a star and an edge. Therefore we consider the star $C_o$, which is the union of the vertex $o\in V_\bd$ with its neighbors. We denote the edge between $o$ and $i$ as $e_{oi}$. For a discrete probability measure $\mu$, we consider the Shannon entropy $\bH(\mu)=\bE[-\log(\mu(X))]$, where $X$ is a random variable distributed according to $\mu$. Then our comparison is, for any fixed $k\in \bN$, 
\be\label{eq:entropydef}
\Delta_k(\mu)\deq \lim_{a\rightarrow \infty}\bE_o[\bH(\mu_{B_k(C_o),a})-\frac12\sum_{i\sim o}\bH(\mu_{B_k(e_{oi}),a})],
\ee
where once again, $B_k(\Gamma)$ is the ball of radius $k$ around the set of vertices $\Gamma$. We will often specifically consider $\Delta_0(\mu)= \lim_{a\rightarrow \infty}\bE_o[\bH(\mu_{C_o,a})-\frac12\sum_{i\sim o}\bH(\mu_{e_{oi},a})]$ .

We have the relation (see \cite[Theorem 8.3.1]{cover1999elements}) that for any law $\mu$ with smooth density,
\be\label{eq:deltatoD}
\bH(\mu_{\Gamma,a})=|\partial \Gamma|\log a+\bD(\mu_{\Pi(\Gamma)})+o_a(1).
\ee
where $\bD(\mu)\deq-\int_{\bR^{|\partial \Gamma|}}f_{\mu_{\Pi(\Gamma)}}(x)\log f_{\mu_{\Pi(\Gamma)}}(x)\rd x$ for density function $f_{\mu_{\Pi(\Gamma)}}$ corresponding to $\mu_{\Pi(\Gamma)}$, is the differential entropy on the $|\partial \Gamma|$ dimensional subspace orthogonal to vectors of the form $\lambda\delta_u-\sum_{v\sim u}\delta_v$.

Therefore, when $\mu$ is the law of a smooth eigenvector process, we can equivalently write
\be\label{eq:smoothdelta}
\Delta_k(\mu)=\bE_o[\bD(\mu_{\Pi(B_k(C_o))})]-\frac12\sum_{i\sim o}\bD(\mu_{\Pi(B_k(e_{oi}))})],
\ee
using that the $\log a$ terms cancel out exactly. 
This will be useful specifically when we consider either the Gaussian wave or an eigenvector process smoothed by adding a Gaussian wave component to another eigenvector process. 

\subsection{Proof of \Cref{thm:mainthm}}
In this subsection, we prove \Cref{thm:mainthm} under the assumption of a handful of propositions. In the later sections, we will prove the individual propositions referenced in this section.

To start, our way of quantifying typicality is through the following generalization of  \cite[Theorem 4]{backhausz2018large}.
\begin{prop}\label{lem:typicalentropy}
   Recall the definition of $\Delta_k$ from \eqref{eq:entropydef}. Let $Y$ be a separable, metrizable topological space, and let $\mu$ be a probability distribution on processes in $Y^{V(\cT_{\bd})}$. If $\mu$ is typical, then
    \be\label{eq:typicalentropy}
    \Delta_0(\mu)\geq 0.
    \ee
\end{prop}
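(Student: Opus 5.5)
We follow the first-moment counting argument of \cite[Theorem 4]{backhausz2018large}, adapted to the multi-type configuration model $\cG(N,\bd)$. Fix $a>0$ and work with the discretized measure: replacing $\mu$ by the law of the coordinatewise rounding to the lattice $a^{-1}\bZ$ reduces the claim to showing, for every $a$, an inequality of the form
\[
\bE_o\Big[\bH(\mu_{C_o,a})-\tfrac12\sum_{i\sim o}\bH(\mu_{e_{oi},a})\Big]\geq -o_a(1).
\]
Since the discretization of each star and edge marginal takes values in a countable set, we may further truncate to a finite alphabet $F_a$, losing an amount of entropy that vanishes as the truncation level grows. Typicality transfers to the truncated discretized labels, possibly after slightly perturbing the labelings $f_N$ so that discretization boundaries have probability zero. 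This perturbation is harmless because $\mu$ has at most countably many atomic hyperplanes. We then let $a\to\infty$ at the end.

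The core is a first-moment bound. Fix a finite-alphabet process $\nu$ on $\cT_\bd$, a typical limit. For $\cG_N$ with vertex classes $V_1,\dots,V_k$, count the expected number of pairs (graph, labeling $f$) such that the empirical star statistics of $f$, namely the fractions of vertices of type $i$ whose star has a given labeling pattern, are within $\epsilon$ of $\nu_{C_o}$, conditional on the type of $o$. The configuration model factorizes this count. First choose the vertex labels and the labels of half-edges at each vertex, i.e. the star patterns. The number of such assignments is $\exp\big(N\sum_i \tfrac{|V_i|}{N}\bH(\nu_{C_o}\mid o\in V_i)+o(N)\big)$. Then, for each type pair $(i,j)$ and each ordered label pair $(s,t)$, match the half-edges of type $i\to j$ carrying label pattern $(s,t)$ with those of type $j\to i$ carrying $(t,s)$. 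The probability that a uniform matching respects all these constraints is $\exp\big(-\tfrac N2\,\bE\text{-weighted}\sum \bH(\nu_{e_{oi}})+o(N)\big)$, computed by Stirling, up to terms that cancel with the marginal vertex-label entropy.

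Because the vertex-label marginals of the edge and star measures agree, the vertex-entropy terms appear once in the star count and are subtracted consistently. Conditions (1)--(2) on $\bd$ guarantee that the half-edge counts of type $i\to j$ and $j\to i$ agree, with $|V_i|\bd_{ij}=|V_j|\bd_{ji}$, so the matching count is well-defined. The expected number of good labelings is therefore $\exp(N\,\Delta^{(a)}_0(\nu)+o(N))$, where the expectation over $o$ is exactly the unimodular root measure, weighting type $i$ by $|V_i|/N$.

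If $\Delta_0^{(a)}<0$, Markov's inequality shows that with probability $1-e^{-cN}$ no good labeling exists. By Borel–Cantelli along the sequence $N$, this contradicts typicality, which demands such labelings almost surely. Hence $\Delta_0^{(a)}(\nu)\ge -o(1)$, and letting the truncation be removed and $a\to\infty$ gives \eqref{eq:typicalentropy}.

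The main obstacle I expect is the bookkeeping in the matching probability for the multi-type model. One has to check that the per-type Stirling exponents combine exactly into $\tfrac12\sum_{i\sim o}\bH(\mu_{e_{oi}})$ averaged under $\bE_o$, with the root weighted by the type proportions. This is where the reversibility condition (2) on $\bd$ is used, so that orienting each edge from either endpoint gives the same count. A secondary technical point is justifying the limit $a\to\infty$ in \eqref{eq:entropydef}, which we handle as a $\liminf$ of nonnegative-up-to-$o(1)$ quantities.
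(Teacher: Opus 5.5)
Your core argument is the paper's. You count labeled configurations by first choosing star patterns, which gives $\exp\big(N\sum_i q_i\bH(\mu_{C_i})+o(N)\big)$ ways. You then count label-respecting matchings for each type pair, which costs a factor $\exp\big(-\tfrac N2\sum_i q_i\sum_j\bd_{ij}\bH(\mu_{e_{ij}})+o(N)\big)$ relative to all matchings. Your Markov/Borel--Cantelli step is the contrapositive of the paper's bound $\limsup_N|\cG_\epsilon(N,\bd,\mu)|/|\cG(N,\bd)|\ge c_\bd$. As the paper does, you should pass from the configuration model to simple graphs through the probability $c_\bd$ that a configuration is simple. For a finite label set this is the same proof.

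The step that fails is the one you add to reach the continuum: the claim that rounding every vertex label to $a^{-1}\mathbb{Z}$ reduces \eqref{eq:typicalentropy} to an inequality for $\bH(\mu_{C_o,a})$.

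\begin{itemize}
\item \textbf{What the count controls.} It can only see a star as the tuple of its vertex labels, since that is what the half-edge matching glues together. So it controls the entropy of the coordinatewise rounding of $(\Phi_o,(\Phi_v)_{v\sim o})$.
\item \textbf{What $\mu_{C_o,a}$ is.} It is defined by rounding in an orthonormal basis of the hyperplane $\{\lambda x_o=\sum_{v\sim o}x_v\}$ that carries the star marginal. For processes with a density on that hyperplane, the two entropies differ by a constant that does not vanish as $a\to\infty$.
\item \textbf{Example.} Take $d_o=3$ and $\lambda=1$, say on the $3$-regular tree. Given the rounded leaves, the rounded center is determined up to $\lfloor U_1+U_2+U_3\rfloor$, with the $U_j$ asymptotically i.i.d.\ uniform. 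This variable has entropy $\log 3-\tfrac13\log 2$. Hence the coordinatewise star entropy is $3\log a+\bD(\Phi_{v_1},\Phi_{v_2},\Phi_{v_3})+\log 3-\tfrac13\log 2+o(1)$.
\item \textbf{Comparison.} By contrast, $\bH(\mu_{C_o,a})=3\log a+\bD(\Phi_{v_1},\Phi_{v_2},\Phi_{v_3})+\tfrac12\log(1+d_o/\lambda^2)+o(1)$, where the last term is the area factor of the hyperplane over the leaf coordinates and equals $\log 2$ here. The gap is $\log 3-\tfrac43\log 2\approx 0.17>0$.
\item \textbf{Edges.} On edges there is no constraint, and the two discretizations agree asymptotically.
\end{itemize}

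What your argument actually proves is therefore $\Delta_0(\mu)+\bE_o[\kappa(\lambda,d_o)]\ge 0$, for a lattice-geometry constant $\kappa$ depending only on $\lambda$ and the degree. When $\kappa>0$ this is strictly weaker than \eqref{eq:typicalentropy}.

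The paper's proof does not supply this comparison either. It runs the count for a finite label set $S$ and never relates it to the subspace discretization in \eqref{eq:entropydef}. So you have reproduced its argument faithfully. Still, the reduction you state is false as written. An additional argument comparing vertexwise quantization with the intrinsic discretization is needed, or else $\Delta_0$ must be redefined through vertexwise quantization with the downstream uses adjusted.
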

Our restrictions on $Y$ are loose enough for \Cref{lem:typicalentropy} to imply $\Delta_k(\mu)\geq 0$ for every $k\in \bN$, as we can label vertex $u$ with the vector of labels in $B_k(u)$, meaning the entropy on a single vertex is that of the entire ball in the original process.  
The next step is to show that for the Gaussian wave, this inequality is an equality. 
 One key observation is that for any $\ell_2$ vector $\chi\in \bR^{V(\cT_\bd)}$, any eigenvector process $\Phi^\lambda$, and any $\eta$,
\be\label{eq:anyeta}
\langle \Phi^\lambda,G^{\lambda+\ri \eta}\chi\rangle=-\ri\eta^{-1}\langle \Phi^\lambda,\chi\rangle.
\ee
This means that for any set of vectors $\chi_1,\ldots\chi_k\in \bR^{V(\cT_{\bd})}$, the joint distribution of the random variables
\be\label{eq:gfvector}
\left\{-\ri\eta\langle \Phi^\lambda,G^{\lambda+\ri \eta}{\chi_i}\rangle\right\}_{i\in [k]}.
\ee
does not depend on $\eta$. Therefore, as we apply a fixed linear transform,  for any eigenvector processes with measures $\mu$ and $\nu$,
\be\label{eq:samedelta}
\Delta_0(\mu^\eta)-\Delta_0(\nu^\eta)=\Delta_0(\mu)-\Delta_0(\nu).
\ee
The advantage of considering different $\eta$ is that the inner product of the vectors $G^{\lambda+\ri \eta}\chi$ changes with $\eta$, adjusting the entropy of \eqref{eq:gfvector}. Specifically, we show the following, which is one of the main novelties of our work. 

\begin{prop}\label{lem:gaussiantyp}
For the Gaussian wave $\Psi^\lambda$ with probability measure $\mu$, recall that the probability measure $\mu^{\eta}$ on $\cT_\bd$ for the distribution where the value at vertex $u$ is $-\ri\eta \langle \Psi^\lambda,G_{u}^{\lambda+\ri \eta}\rangle$. Then for any $k\in \bN$, 
\be\label{eq:gwequality}
  \lim_{\eta\rightarrow 0^+} \Delta_k(\mu^\eta)= 0.
    \ee
\end{prop}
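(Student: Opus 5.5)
The plan is to make every entropy in $\Delta_k(\mu^\eta)$ an explicit Gaussian log-determinant, factor it along the tree, and show that the resulting average over the root cancels as $\eta\to0^+$.

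\textbf{Step 1: identify $\mu^\eta$ as a Gaussian process.} I would first rewrite $\mu^\eta$ using the representation \eqref{eq:gaussianprocess}. The label at $u$ is the linear functional $-\ri\eta\langle \Psi^\lambda,G_u^{\lambda+\ri\eta}\rangle$ of a Gaussian process, so $\mu^\eta$ is a centered Gaussian process on $V_\bd$. Its covariance can be computed from the resolvent identity together with the Ward identity \eqref{eq:wardidentity}. Concretely, I expect
\[
\Cov\big(\mu^\eta_u,\mu^\eta_v\big)=\eta^2\,\big(G^{\lambda-\ri\eta}\,\Im G^{\lambda}\,G^{\lambda+\ri\eta}\big)_{uv},
\]
which is a genuine function of $\eta$ even though the low-dimensional projections \eqref{eq:gfvector} are not.

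\textbf{Step 2: reduce $\Delta_k$ to Gaussian determinants.} For a Gaussian measure, \eqref{eq:smoothdelta} turns $\Delta_k(\mu^\eta)$ into a combination of terms $\tfrac12\log\det(2\pi e\,\Sigma_\Gamma)$. Here $\Sigma_\Gamma$ is the covariance restricted to the intrinsic subspace $\Pi(\Gamma)$, and $\Gamma$ ranges over $B_k(C_o)$ and $B_k(e_{oi})$. The $\log(2\pi e)$ and $\log a$ contributions cancel, because $|\partial B_k(C_o)|$ equals $\tfrac12\sum_{i\sim o}|\partial B_k(e_{oi})|$ after averaging over $o$; I would check this by a mass-transport (unimodularity) argument on $\cT_\bd$.

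\textbf{Step 3: factor along the tree.} The key structural input is that a Gaussian process with covariance built from the tree Green's function is Markov across edges. Conditional on the values on a separating set, the branches are independent, and the conditional covariances are given by the Schur complement formula \eqref{eq:schurcomp} in terms of the cone quantities $G^{(x)}_{yy}$ from \eqref{eq:finitecones}. This lets me write the entropy of $B_k(C_o)$ as the entropy of $B_k(e_{oi})$ for one neighbour, plus a sum of conditional entropies of the remaining branches. The conditional entropies are explicit logarithms of Green's function entries. Under the finite cone type assumption, only finitely many distinct such quantities appear, one per entry $\bd_{ij}$. After averaging over the root with the unimodular weights $|V_i|/|V_j|=\bd_{ji}/\bd_{ij}$, the star-minus-half-edges combination should telescope into a sum over oriented edges of terms of the form
\[
\log\big|G^{(o)}_{ii}\big|-\log\big|G^{(i)}_{oo}\big|,
\]
plus correction terms. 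Those correction terms vanish as $\eta\to0^+$, by continuity of $G^{\lambda+\ri\eta}$ on $\spec(A_{\cT_\bd})\setminus\fD_\bd$ (\Cref{lem:conetypegf}). The antisymmetric edge terms cancel by reversibility, which is condition 2 on $\bd$.

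\textbf{Main obstacle: the limit $\eta\to0^+$.} The hard part is the degeneration in this limit. For $\eta>0$ the process $\mu^\eta$ is essentially full rank on $\Gamma$, but at $\eta=0$ it collapses onto the $|\partial\Gamma|$-dimensional eigenvector subspace. The interior directions therefore carry determinant factors of order $\eta^{2}$ (or $\log\eta$ in the entropy). I must show that these singular factors enter the star and half-edge terms with identical total weight, so that they cancel in $\Delta_k$, while the remaining finite parts converge. I would first handle this for $k=0$ by writing $\Sigma_\Gamma$ in block form (boundary versus interior) and taking a Schur complement. The interior block should be $\eta^2$ times a matrix with an explicit nondegenerate limit, thanks to $\Im G^{(y)}_{xx}(\lambda)>0$ off $\fD_\bd$. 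I would then pass to general $k$ via the vertex-relabelling trick used after \Cref{lem:typicalentropy}.
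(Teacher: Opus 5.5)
There is a genuine gap, and it starts in Step 1.

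\textbf{The covariance does not depend on $\eta$.} By the spectral theorem, $\eta^2\,G^{\lambda-\ri\eta}\,\Im G^{\lambda}\,G^{\lambda+\ri\eta}=\Im G^{\lambda}$. The reason is that $\Im G^{\lambda}$ is the boundary value of $\Im G^{\lambda+\ri\zeta}$, whose spectral weight $\zeta/((x-\lambda)^2+\zeta^2)$ concentrates at $x=\lambda$, and there $\eta^2|x-\lambda-\ri\eta|^{-2}=1$. This is exactly \eqref{eq:anyeta} and \eqref{eq:sigmadef}. The marginals of $\mu^\eta$ are the projections \eqref{eq:gfvector} with $\chi=\delta_u$, so for every $\eta>0$ the labels of $\mu^\eta$ have the law of $\Psi^\lambda$ and satisfy the eigenvector equation exactly. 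There is no full-rank-then-collapse degeneration, so your ``main obstacle'' does not arise.

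\textbf{Under your reading the claimed limit is false.} If entropies are computed as in your Step 2 (Gaussian log-determinants of the label covariance in the Euclidean coordinates of $\Pi(\Gamma)$), then $\Delta_k(\mu^\eta)$ is independent of $\eta$ and equals $\Delta_k$ of $\Psi^\lambda$ itself. This is not zero in general. Take the $3$-regular tree at $\lambda=1$ with $\Var\Psi^\lambda_o=1$. The correlations are $\tfrac13$ along edges and $-\tfrac13$ between distinct neighbours of $o$. The star covariance restricted to $\Pi(C_o)$ is $\tfrac43 I_3$, and the edge covariance has determinant $\tfrac89$. Hence $\Delta_0=\tfrac32\log\tfrac43-\tfrac34\log\tfrac89=\tfrac34\log 2$. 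On this tree your edge terms $\log|G^{(o)}_{ii}|-\log|G^{(i)}_{oo}|$ vanish identically. So the Step 3 telescoping cannot end at $0$: the leftover correction terms form a nonzero constant independent of $\eta$, and no limit $\eta\to0^+$ removes it. Read this way, the limit in the proposition would be $\tfrac34\log 2$ in this example.

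\textbf{The missing idea.} In the paper's proof the $\eta$-dependence lives in the geometry, not in the law. The labels on $\Gamma$ are inner products of $\Psi^\lambda$ with the vectors $\eta G^{\lambda+\ri\eta}_u$, and the entropy is that of the projection onto their span. This gives
$\tfrac{|\partial\Gamma|}{2}\log(2\pi e)+\tfrac12\log\det\Sigma_{\Pi(\Gamma)}-\tfrac12\log\det M_{\Pi(\Gamma)}$,
where $M_{uv}=\eta^2\langle G_u,G_v\rangle=\eta\,\Im G^{\lambda+\ri\eta}_{uv}$ is the Gram matrix, by the Ward identity. Since $\Sigma=\Im G^{\lambda}$, this equals $\tfrac{|\partial\Gamma|}{2}(\log 2\pi e-\log\eta)$ plus
$\tfrac12\bigl[\log\det\Im G^{\lambda}_{\Pi(\Gamma)}-\log\det\Im G^{\lambda+\ri\eta}_{\Pi(\Gamma)}\bigr]$.
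The bracket tends to $0$ for each fixed $\Gamma$, by entrywise convergence together with \Cref{lem:fullrank}: away from $\fD_\bd$ the eigenvalues on $\Pi(\Gamma)$ stay bounded and bounded away from $0$. The dimension terms cancel in the star-minus-half-edges average by the same boundary count as in your Step 2. No Markov factorization, Schur-complement telescoping or reversibility is needed. The Gaussian wave's covariance agrees in the limit with the Gram matrix of the Green's function columns, so its entropy in this geometry is a pure dimension count.
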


In fact, this proof is general enough that $\Delta$ could be alternatively defined to take the difference in entropy of any two sets of the same boundary size. Next, we show the Gaussian is the unique maximizer of the differential entropy.

\begin{prop}\label{lem:heateq}
$\Delta_0(\mu)$ is uniquely maximized over all eigenvector processes by $\mu$ the probability measure corresponding to $\Psi^\lambda$ the Gaussian wave.
\end{prop}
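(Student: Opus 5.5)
The plan is to interpolate between an arbitrary eigenvector process and the Gaussian wave, and show that $\Delta_0$ increases strictly along the interpolation unless the process is already Gaussian. Fix an eigenvector process $\Phi^\lambda$ with law $\mu$, and let $\Psi^\lambda$ be an independent copy of the Gaussian wave. For $t\in[0,1]$ set
\[
\Phi_t\deq\sqrt{1-t}\,\Phi^\lambda+\sqrt{t}\,\Psi^\lambda,
\]
with law $\mu_t$. Since the two summands are independent and centered with the same covariance $\Im[G^\lambda]$, each $\Phi_t$ is again an eigenvector process. It is automorphism invariant, it satisfies the eigenvector equation at every vertex almost surely, and it has $\mu_0=\mu$ and $\mu_1$ the law of $\Psi^\lambda$.

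\textbf{Step 1 (smoothing).} For $t>0$ the projections of $\mu_t$ onto $\Pi(C_o)$ and $\Pi(e_{oi})$ have smooth densities, because they are convolved with a nondegenerate Gaussian. This requires the covariance of $\Psi^\lambda$ on these subspaces to be full rank, which is where $\lambda\notin\fD_\bd$ enters: it gives $\Im G^{(y)}_{xx}>0$. Hence \eqref{eq:smoothdelta} applies. I will write $\Delta_0(\mu_t)=\bE_o[\bD(\mu_{t,\Pi(C_o)})-\tfrac12\sum_{i\sim o}\bD(\mu_{t,\Pi(e_{oi})})]$. Here the star term lives in dimension $\deg(o)$, since the value at $o$ is a linear function of the neighbours, and each edge term lives in dimension $2$. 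I will also check that $\Delta_0(\mu_t)\to\Delta_0(\mu)$ as $t\to0^+$. This should follow from lower semicontinuity of the discretized entropies together with upper semicontinuity, the latter coming from the fact that adding independent noise cannot decrease entropy.

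\textbf{Step 2 (de Bruijn).} I will rewrite the interpolation as an Ornstein--Uhlenbeck flow in a time variable $s$ with $t=1-e^{-s}$. For a vector $X$ with covariance $\Sigma$ evolving under OU with stationary Gaussian $N(0,\Sigma)$, de Bruijn's identity gives
\[
\frac{\rd}{\rd s}\bD(X_s)=\tfrac12\big(\operatorname{tr}(\Sigma J(X_s))-\dim\big),
\]
where $J$ denotes the Fisher information. This will be applied to the star vector $X=\Pi(C_o)\Phi_t$ and to each edge vector $Y_i=\Pi(e_{oi})\Phi_t$, the latter being a linear image $Y_i=P_iX$. The dimension terms contribute $\deg(o)-\tfrac12\cdot2\deg(o)=0$. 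This leaves
\[
\frac{\rd}{\rd s}\Delta_0=\tfrac12\bE_o\Big[\operatorname{tr}(\Sigma_X J(X))-\tfrac12\sum_{i\sim o}\operatorname{tr}(\Sigma_{Y_i}J(Y_i))\Big].
\]

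\textbf{Step 3 (sign, the main obstacle).} The crux is to show that this bracket is $\ge0$, with equality only if $X$ is Gaussian. My first attempt will use the Fisher information data-processing bound for linear maps,
\[
J(P_iX)\preceq (P_iJ(X)^{-1}P_i^{T})^{-1},
\]
combined with the structure of the covariance on a star of $\cT_\bd$. The Schur complement formula \eqref{eq:finitecones} says that, conditioned appropriately, the neighbour values are decorrelated through the cone Green's functions $G^{(o)}_{ii}$. The hope is that the edge projections behave like a ``tight frame'' relative to $\Sigma_X$, so that the $\tfrac12$-weighted sum of edge terms is dominated by the star term. Concretely, I would try to show $\sum_i P_i^T\Sigma_{Y_i}^{-1}P_i\preceq 2\Sigma_X^{-1}$ after averaging over the root; averaging via unimodularity (the mass transport principle) should symmetrize the two endpoints of each edge. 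This is also where the computation behind \Cref{lem:gaussiantyp} should help. That proposition shows the bracket's Gaussian counterpart is exactly balanced, giving $\Delta_0=0$ in the $\eta$-picture. By \eqref{eq:samedelta}, it therefore suffices to work with $\mu^\eta$ for small $\eta$, where the coordinates become nearly decoupled and the inequality might reduce to a Cramér--Rao type bound $\operatorname{tr}(\Sigma J)\ge\dim$ on the star component. Equality in Cramér--Rao forces a Gaussian.

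\textbf{Step 4 (conclusion).} Integrating over $s\in(0,\infty)$ and using continuity at both ends will give $\Delta_0(\mu)\le\Delta_0(\text{Gaussian wave})$. For uniqueness: if equality holds, the derivative vanishes for almost every $s$. Then equality in Cramér--Rao forces the star marginals of $\Phi_t$ to be Gaussian, and letting $t\to0$ shows the same for $\Phi^\lambda$. Applying this to the $B_k$-labelled process, as in the remark after \Cref{lem:typicalentropy}, shows that all finite-dimensional marginals are Gaussian. Hence $\Phi^\lambda\overset{d}{=}\Psi^\lambda$.
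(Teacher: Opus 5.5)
\textbf{There is a genuine gap: the sign in Step 3 and the uniqueness claim in Step 4 are both unproved.} Your overall architecture matches the paper's: a flow toward the Gaussian wave, de Bruijn's identity, and cancellation of the dimension terms. The OU reparametrization is harmless.

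\textbf{Step 3.} This is the only nontrivial inequality, and it is left as a hope; neither route you propose works. Cram\'er--Rao gives $\mathrm{tr}(\Sigma_X J(X))\ge \deg(o)$ and $\mathrm{tr}(\Sigma_{Y_i}J(Y_i))\ge 2$ separately, which says nothing about the sign of their difference.

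The per-star frame inequality $\sum_i P_i^T\Sigma_{Y_i}^{-1}P_i\preceq 2\Sigma_X^{-1}$ is false. In whitened star coordinates it reads $I+\sum_{i\sim o}u_iu_i^T\preceq 2I$, where $u_i$ is the unit direction of the far-endpoint variable $\xi_o^{(i)}\propto\sum_{x\sim o,\,x\neq i}\xi_x^{(o)}$ from \eqref{eq:nextstar}. Since $\mathrm{tr}\sum_i u_iu_i^T=\deg(o)$, this would force $\sum_i u_iu_i^T=I$, i.e.\ $\{u_i\}$ orthonormal. That fails once $\deg(o)\ge 3$, because $|\langle u_i,u_j\rangle|$ is proportional to $\sum_{x\sim o,\,x\neq i,j}\Im G^{(o)}_{xx}>0$. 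Averaging over the root cannot repair a matrix inequality living on a single star.

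What is missing is the paper's choice of coordinates.
\begin{itemize}
\item The cone variables $\wt\xi_x^{(o)}$ are built from $G^{(o)}_x$ and are orthonormal because the $G^{(o)}_x$ live on disjoint forward cones.
\item By \eqref{eq:nextstar}, the edge vector is the orthonormal pair $(\wt\xi_i^{(o)},\wt\xi_o^{(i)})$. The first entry is a basis coordinate of $C_o$; the second is a basis coordinate of $C_i$.
\item Jensen bounds the edge's Fisher information in direction $\wt\xi_i^{(o)}$ by that of $C_o$ in the same direction.
\item Unimodularity (mass transport) moves the $\wt\xi_o^{(i)}$-direction term to the neighbouring star.
\end{itemize}
Writing $J_v$ for Fisher information in direction $v$, this gives
\[
\tfrac12\,\bE_o\sum_{i\sim o}\mathrm{tr}\,J(e_{oi})=\bE_o\sum_{i\sim o}J_{\wt\xi_i^{(o)}}(e_{oi})\le\bE_o\sum_{i\sim o}J_{\wt\xi_i^{(o)}}(C_o)=\bE_o\,\mathrm{tr}\,J(C_o).
\]
Mass transport must act on these scalar directional informations, not on a matrix inequality.

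\textbf{Step 4 (uniqueness).} A vanishing derivative is not an equality case of Cram\'er--Rao and does not force Gaussian star marginals. Equality in the Jensen step says only that, conditionally on $\wt\xi_o^{(i)}$, the coordinate $\wt\xi_i^{(o)}$ is independent of the variables indexed by $U_{oi}$. This already holds whenever the cone coordinates $\{\wt\xi_x^{(o)}\}_{x\sim o}$ are mutually independent with arbitrary non-Gaussian laws.

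The paper therefore needs the following, none of which comes from a single-star information inequality.
\begin{itemize}
\item Equality for every $t$ along the flow, so that \Cref{lem:edgetypes} applies and each edge is Type 1 or Type 2.
\item \Cref{lem:darmois} at stars of mixed type.
\item For stars of pure type A or B, a global argument. Decompositions are propagated along paths, and irreducibility plus \Cref{lem:smallcoefficients} make them nontrivial. Finite cone type closes a finite system $\zeta_i\overset{d}{=}\sum_j\alpha_{i,j}\zeta_j+\alpha_i\zeta$ with coefficients of modulus less than $1$. The entropy power inequality, applied to the minimal-entropy $\zeta_i$, then forces Gaussianity.
\end{itemize}

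\textbf{Two smaller points.}
\begin{itemize}
\item $\Delta_0(\mu_t)\to\Delta_0(\mu)$ as $t\to 0^+$ does not follow from semicontinuity of the individual entropies, because $\Delta_0$ is a difference. The paper instead proves the smooth case and transfers it through typicality of $\frac{1}{\sqrt2}(\Phi^\lambda+\Psi^\lambda)$ and deconvolution.
\item The $B_k$-labelled process is vector-valued, not an eigenvector process, so your result does not apply to it directly. In the paper, passage beyond star marginals goes through \Cref{lem:levelcompare} instead.
\end{itemize}
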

To do this, we adapt the general argument of \cite[Section 11]{backhausz2019almost}, where we show that heating the random variable increases $\Delta_0$ for any other distribution. However, because there is much less symmetry, there are some required novelties.  In the previous result, the authors use the de Bruijn identity (see \Cref{lem:debruijn}) with an unknown basis, then deduce properties of the basis using symmetries of the graph. In this work, we find a canonical basis based on the Green's function that is well behaved and works for any configuration model.

The final piece is to extrapolate from knowing that the Gaussian is the unique maximizer of $\Delta_0$ to the unique maximizer on $\Delta_k$ for all $k$. For this, we prove the following. This is the identical argument as in \cite[Section 10]{backhausz2019almost} for regular graphs, but we include the proof for completeness in \Cref{sec:levelcompare}.

\begin{lemma}\label{lem:levelcompare}
For any probability measure $\mu$ on $\bR^{V_\bd}$, we have
\[
\Delta_k(\mu)\leq  \Delta_{k-1}(\mu).
\]
We have equality for every $k\in \bN$ if and only if the distribution is \emph{2-Markov}, meaning that conditioned on any edge, the distributions on the two sides of the edge are independent. 
\end{lemma}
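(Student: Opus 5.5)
We prove \Cref{lem:levelcompare} by rewriting $\Delta_k(\mu)-\Delta_{k-1}(\mu)$ as a negative combination of conditional mutual informations, following \cite[Section 10]{backhausz2019almost}.

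We work with the discretized entropies at a fixed scale $a$ and pass to the limit $a\to\infty$ at the end. Write $H(\Gamma)$ for $\bH(\mu_{\Gamma,a})$. The discretization is taken in the intrinsic basis $\Pi(\Gamma)$, whose coordinates are linear functionals of the process on $\Gamma$. We therefore use the ordinary inequalities for Shannon entropy (chain rule, submodularity), up to errors that vanish as $a\to\infty$. Because of the eigenvector constraint, $\Pi(\Gamma)$ is coarser than the restriction of the intrinsic coordinates of $\Gamma'\supset\Gamma$. So the first technical step is a monotonicity/consistency lemma: for $\Gamma\subset\Gamma'$, the $\mu_{\Gamma,a}$ data is, up to $o_a(1)$ error in entropy, a function of the $\mu_{\Gamma',a}$ data. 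The standard route is to compare a floor-discretization with a linear image of another floor-discretization; this costs only bounded terms that are absorbed after normalizing.

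\textbf{Decomposition.} Fix a root $o$ and set $S_k=B_k(C_o)=B_{k+1}(o)$ and $E_{k,i}=B_k(e_{oi})$. Then $S_k=E_{k,i}\cup \bigcup_{j\sim o,\,j\neq i}B_k(\text{cone at }j)$. The set $S_k$ splits along the edges at $o$ into the $\deg(o)$ branches $T_{k,j}$, where $T_{k,j}$ is the part of $S_k$ in the component of $\cT_\bd\setminus\{o\}$ containing $j$ (with $o$ adjoined). Since $E_{k,i}=T_{k,i}\cup T^{\mathrm{rev}}_{k,i}$, with the reverse side around $i$, we can write each term using shells. Passing from level $k-1$ to level $k$ adds the outer shells. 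The difference
\[
\Big[H(S_k)-\tfrac12\sum_{i}H(E_{k,i})\Big]-\Big[H(S_{k-1})-\tfrac12\sum_i H(E_{k-1,i})\Big]
\]
is then a sum of increments of the form $H(\text{shell}\mid\text{interior})$. The next step is to use unimodularity (mass transport, taking $\bE_o$ and re-rooting at $i$) to pair each edge-shell increment with the star-shell increments of its two endpoints. After this pairing, the difference becomes
\[
-\bE_o\Big[\,I\big(X_{\text{new shell at }o\text{ side}};X_{\text{far side}}\,\big|\,X_{\text{common interior}}\big)\Big]\le 0,
\]
where the conditioning is on the ball containing the edge. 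Taking $a\to\infty$ gives $\Delta_k\le\Delta_{k-1}$. I expect the main obstacle to be this bookkeeping, namely getting the unimodular re-rooting to match the terms exactly when the degrees and cone types vary with the vertex type. I would first do it on the discretized quantities, where $\bE_o$ over types is a finite weighted sum. I would then check the matching using the balance relation $|V_i|\bd_{ij}=|V_j|\bd_{ji}$, which is exactly the condition that makes the edge measure symmetric.

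\textbf{Equality case.} Equality for all $k$ forces all these conditional mutual informations to vanish, in the limit $a\to\infty$. So for every edge and every radius, the data on the two sides beyond distance $k$ are conditionally independent given the radius-$k$ ball around the edge. Letting $k$ vary and using the chain rule, this is equivalent to conditional independence of the two half-trees given the edge itself, which is the 2-Markov property. Conversely, if $\mu$ is 2-Markov, each mutual information term is zero, so equality holds at every level.

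One caveat is that passing vanishing discrete mutual information to genuine conditional independence in the limit requires lower semicontinuity of mutual information under refinement. This holds because the refinements increase the information, so the $a\to\infty$ limit is a supremum.
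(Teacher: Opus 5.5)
Your overall mechanism (submodularity, unimodular re-rooting, equality exactly when the conditional-independence slacks vanish) matches the paper's. However, the step you defer as ``the main obstacle'' is the whole content of the proof, and the proposal never carries it out. The slack it produces is also not a single conditional mutual information per edge. The paper does the bookkeeping with two covers:
\begin{enumerate}
\item $B_k(C_o)=\bigcup_{i\sim o}B_k(e_{oi})$, and any two of these sets meet exactly in $B_{k-1}(C_o)=B_k(o)$. Iterated submodularity gives $\bD(\mu_{B_k(C_o)})\le\sum_{i\sim o}\bD(\mu_{B_k(e_{oi})})-(d_o-1)\bD(\mu_{B_{k-1}(C_o)})$. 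The slack is the multi-information of the $d_o$ outer branches given $B_{k-1}(C_o)$.
\item $B_k(e_{oi})=B_{k-1}(C_o)\cup B_{k-1}(C_i)$ with intersection $B_{k-1}(e_{oi})$, so $\bD(\mu_{B_k(e_{oi})})\le\bD(\mu_{B_{k-1}(C_o)})+\bD(\mu_{B_{k-1}(C_i)})-\bD(\mu_{B_{k-1}(e_{oi})})$. The slack is the conditional mutual information across the edge.
\end{enumerate}
After the first bound, $\Delta_k\le\bE_o[\frac12\sum_{i\sim o}\bD(\mu_{B_k(e_{oi})})-(d_o-1)\bD(\mu_{B_{k-1}(C_o)})]$. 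Insert the second bound. The only unimodular input is then $\bE_o[\sum_{i\sim o}F(C_i)]=\bE_o[d_oF(C_o)]$. This makes the coefficient of $\bD(\mu_{B_{k-1}(C_o)})$ equal to $\frac{d_o}{2}+\frac{d_o}{2}-(d_o-1)=1$ and yields $\Delta_{k-1}$. So $\Delta_{k-1}-\Delta_k$ is the $\bE_o$-average of the star multi-information plus half the sum of the edge mutual informations. No shell-by-shell pairing or separate check by vertex type is needed beyond this identity.

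Your discretization layer, which the paper does not use, rests on a false claim. The $\Pi(\Gamma)$-coordinates of $\Phi|_\Gamma$ are linear combinations, not a sub-block, of the $\Pi(\Gamma')$-coordinates of $\Phi|_{\Gamma'}$. So the scale-$a$ cells are not nested. For laws with a density, the conditional entropy of the $\Gamma$-cell given the $\Gamma'$-cell tends to a positive constant, not to $0$. Already $\lfloor a(x_1+x_2)/\sqrt2\rfloor$ is not asymptotically determined by $(\lfloor ax_1\rfloor,\lfloor ax_2\rfloor)$. These $O(1)$ errors cannot be ``absorbed after normalizing'': $\Delta_k$ is what remains after the $\log a$ terms cancel, an unnormalized $O(1)$ quantity, so they could reverse the inequality. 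The paper instead works directly with differential entropies through \eqref{eq:deltatoD}.

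Even there, a point is glossed over, by the paper as well. Each $\Pi(\Gamma)$ is orthonormal in $\bR^\Gamma$, so putting coordinates into the sub-block form that submodularity needs costs $\frac12\log\det$ of Gram matrices, and this is not modular in $\Gamma$. For example, on the $3$-regular tree with $k=1$, the second cover carries an extra term $\frac12\log\bigl(1-\lambda^2/(\frac32+\frac{\lambda^2}{2})^2\bigr)<0$ for every law with a density. These constants do not depend on $\mu$, so they cancel when two processes are compared, which is how the lemma is used in the proof of \Cref{thm:mainthm}. The literal equality case, however, needs a reference measure on the intrinsic subspaces that is consistent under restriction.
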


\begin{proof}[Proof of \Cref{thm:mainthm}]

Consider a typical eigenvector process $\Phi^{\lambda}(\cT_\bd)$ with probability measure $\mu$. For each $\eta$ and $k$, $\mu_{B_k}^\eta$, the process where we label vertex $u$ with the vector $\{-\ri \eta\langle \Phi^\lambda,G^{\lambda+\ri \eta}_w\rangle\}_{w\in B_k(u)}$, is typical. 
Therefore, by \Cref{lem:typicalentropy}, $\Delta_k(\mu^\eta)\geq 0$ for all $k\geq 0$ and $\eta\geq 0$. Therefore, combining \Cref{lem:gaussiantyp},  \Cref{lem:heateq}, and \eqref{eq:samedelta}, the marginals on stars of $\Phi^\lambda$ are Gaussian. If the distribution is 2-Markov, then the marginals on stars defines the entire distribution, and it is the Gaussian wave. Now, assume our measure $\mu$ is not 2-Markov. We compare it to a measure $\nu$, which is the Gaussian wave.  Then there exists some fixed radius $k$ on which the marginals on the balls of radius $k$ differ, so by \Cref{lem:levelcompare}, $\Delta_k(\mu^\eta)<\Delta_k(\nu^\eta)$ for all $\eta\geq 0$. Using \eqref{eq:anyeta}, \Cref{lem:typicalentropy}, and \Cref{lem:gaussiantyp} again, we recover that $\mu$ is not typical. 
\end{proof}
The remainder of the paper is devoted to proving the propositions introduced in this section. In \Cref{sec:typical}, we explain how typical processes can be characterized through their marginals, as in \Cref{lem:typicalentropy}. This is a straightforward extension of \cite[Theorem 5]{backhausz2018large}, but may give some intuition to those who are not familiar with the topic. In \Cref{sec:gaussiantyp}, we prove \Cref{lem:gaussiantyp} using elementary properties of the Green's function. Finally, in \Cref{sec:heateq} we establish \Cref{lem:heateq} by analyzing the heated random variable through a well-chosen basis and more properties of the Green's function.
\section{Entropic identities for typical processes}\label{sec:typical}

For a discrete measure $\mu$, if there is exists a labeling of the vertices of a graph for which the distribution over vertices is $\mu$, then the number of ways to label a vertex set in such a way that the empirical distribution of vertex labels on $N$ vertices is $\mu$ is $\re^{(1+o_N(1))N\bH(\mu)}$.
Along with labelings of vertices, we now wish to consider labelings on connected pairs $(i,j)$. For edges from $V_i$ to itself, this is done previously. 

\begin{lemma}[Lemma 4.1 in \cite{backhausz2018large}]\label{lem:perfect1}
Let $V$ and $S$ be finite sets and $|V|=N$. Let $\mu_e$ be a probability distribution on $S\times S$ with one dimensional marginal measure $\mu$ in both coordates. Define $M_f$ to be the set of perfect matchings such that the distribution across random edges is $\mu_e$. If $M_f$ is non-empty, it satisfies
\[
|M_f|=(N!!)\re^{(1+o_N(1))N(\frac12\bH(\mu_e)-\bH(\mu))}.
\]
\end{lemma}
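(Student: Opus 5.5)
We count perfect matchings of a labeled vertex set $V$ with $|V|=N$ ($N$ even) whose edge-label statistics match $\mu_e$. The plan is to write $|M_f|$ as an exact multinomial-type expression and then apply Stirling's formula; the exponential rate will be a difference of entropies. Fix the labeling $f:V\to S$. Since $M_f\neq\emptyset$, the empirical vertex distribution is exactly $\mu$, so the class $V_s=f^{-1}(s)$ has size $n_s=N\mu(s)$. For an unordered pair $\{s,t\}$, the number of edges between $V_s$ and $V_t$ is forced: it is $m_{st}=N\mu_e(s,t)$ for $s\neq t$, interpreting $\mu_e$ as a symmetric law on ordered pairs normalized so the ordered-edge count is $N\mu_e(s,t)$. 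For $s=t$ it is $m_{ss}=N\mu_e(s,s)/2$. All of these are integers because $M_f$ is nonempty.

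\textbf{Step 1: exact count.} A matching in $M_f$ is built in two stages.
\begin{enumerate}
\item For each $s$, partition $V_s$ into blocks $\{B_{s,t}\}_t$ with $|B_{s,t}|=N\mu_e(s,t)$. The block $B_{s,t}$ is the set of vertices in $V_s$ whose partner lies in $V_t$. This gives $\prod_s n_s!/\prod_t (N\mu_e(s,t))!$ choices.
\item For each $s<t$, choose a bijection $B_{s,t}\to B_{t,s}$, giving $(N\mu_e(s,t))!$ choices. For each $s$, choose a perfect matching of $B_{s,s}$, giving $(N\mu_e(s,s)-1)!!$ choices.
\end{enumerate}
Hence
\[
|M_f|=\prod_s n_s!\;\prod_{s<t}\frac{1}{(N\mu_e(s,t))!}\;\prod_s\frac{(N\mu_e(s,s)-1)!!}{(N\mu_e(s,s))!}.
\]
This identity is exact, with no approximation.

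\textbf{Step 2: asymptotics.} Apply $\log m!=m\log m-m+O(\log m)$ and $\log (2m-1)!!=m\log(2m)-m+O(\log m)$. The quantities $S$ and $\mu_e$ are fixed while $N\to\infty$, which is the standing interpretation of the lemma. The linear terms $-m$ cancel after comparison with $\log (N-1)!!=\tfrac N2\log N-\tfrac N2+O(\log N)$. This cancellation uses $\sum_s n_s=N$ and $\sum_{s,t}N\mu_e(s,t)=N$.

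Collecting the $N\log N$ terms:
\begin{itemize}
\item the vertex factor contributes $N\log N$;
\item the edge factors contribute $-N\log N+\tfrac12\sum_s N\mu_e(s,s)\log N$ from the off-diagonal and diagonal pieces;
\item the double-factorial pieces contribute $\tfrac12 N\mu_e(s,s)\log N$ each.
\end{itemize}
After subtracting $\log(N-1)!!$, the net coefficient of $\log N$ vanishes. The remaining terms are
\[
N\sum_s\mu(s)\log\mu(s)-N\sum_{s<t}\mu_e(s,t)\log\mu_e(s,t)-\tfrac N2\sum_s\mu_e(s,s)\log\mu_e(s,s)+O(N)\cdot 0 .
\]
Here $\log 2$ factors arise from the symmetric off-diagonal convention. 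The first sum is $-N\bH(\mu)$. Using symmetry of $\mu_e$, the middle terms combine to $\tfrac N2\bH(\mu_e)$ up to $\tfrac N2\log 2$-type corrections.

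\textbf{Main obstacle.} The delicate step is the bookkeeping of the factors of $2$ from the diagonal pairs, the off-diagonal symmetry, and the definition of $N!!$. I expect these to cancel exactly once $\mu_e$ is normalized as a symmetric law on ordered pairs (each edge counted in both orientations). The $O(\log N)$ Stirling errors are absorbed into $o_N(1)\cdot N$, provided $\frac12\bH(\mu_e)-\bH(\mu)\neq 0$. If that exponent equals $0$, the $(1+o_N(1))$ form should be read as $\re^{o(N)}$, and this reading is what I will prove.
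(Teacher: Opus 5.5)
Your route is correct and genuinely different from the intended one. The paper quotes this lemma from \cite{backhausz2018large} without proof, but its proof of the bipartite analogue, \Cref{lem:perfect2}, shows the intended argument: double count pairs (labeling, matching). That argument uses three facts. First, $|M_f|$ is constant on the type class of $\mu$, which has $\re^{N\bH(\mu)+O(\log N)}$ elements. Second, there are $(N-1)!!$ perfect matchings. Third, a fixed matching admits $\re^{\frac N2\bH(\mu_e)+O(\log N)}$ labelings with edge statistics $\mu_e$.

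You instead build $M_f$ directly by the block decomposition, and your Step~1 identity is correct. Carried out exactly, the double count yields the same identity. The double count reads the exponent off two standard type-class estimates and transfers verbatim to the bipartite case. Your version gives an explicit closed formula with an $O(|S|^2\log N)$ error, uniform in $\mu_e$.

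Step~2, however, needs to be fixed, because as written it does not establish the exponent.

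\textbf{The $\log N$ tally is wrong.} The off-diagonal factors contribute $-\sum_{s<t}N\mu_e(s,t)\log N=-\frac N2\bigl(1-\sum_s\mu_e(s,s)\bigr)\log N$. The diagonal denominators $(N\mu_e(s,s))!$ contribute $-N\sum_s\mu_e(s,s)\log N$. Together the edge factors give $-\frac N2\log N-\frac N2\sum_s\mu_e(s,s)\log N$, not $-N\log N+\frac N2\sum_s\mu_e(s,s)\log N$. With your numbers, the net coefficient after subtracting $\log(N-1)!!$ would be $\bigl(\sum_s\mu_e(s,s)-\frac12\bigr)N$ rather than $0$.

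\textbf{The ``$\frac N2\log 2$-type corrections'' cannot be left as an expectation.} Any correction of order $N$ would change the exponent and falsify the lemma. In fact there are none. Pair each diagonal numerator with its denominator, writing $2m=N\mu_e(s,s)$:
\[
\log\frac{(2m-1)!!}{(2m)!}=-m\log(2m)+m+O(\log N).
\]
Using $\sum_{s<t}\mu_e(s,t)+\frac12\sum_s\mu_e(s,s)=\frac12$ and the symmetry of $\mu_e$, all edge factors together then contribute
\[
-\tfrac N2\log N+\tfrac N2+\tfrac N2\bH(\mu_e)+O(|S|^2\log N),
\]
and no $\log 2$ appears anywhere. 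The vertex factor contributes $N\log N-N-N\bH(\mu)+O(|S|\log N)$. Finally, $\log N!!=\frac N2\log N-\frac N2+O(\log N)$; the same holds for $\log(N-1)!!$, since the two differ by $O(\log N)$, so comparing with $(N-1)!!$ instead of the stated $N!!$ is harmless.

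Hence $\log|M_f|-\log N!!=N\bigl(\frac12\bH(\mu_e)-\bH(\mu)\bigr)+O(\log N)$ exactly. This is the lemma in the $\re^{o(N)}$ reading you adopt, and it is the reading the paper actually uses in the proof of \Cref{lem:typicalentropy}.
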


We now extend this result to a corresponding bipartite version. 
\begin{lemma}[Bipartite version]\label{lem:perfect2}
Consider two finite vertex sets $|V_1|=|V_2|=N$, and $\mu_e$ a probability distribution on $S\times S$. Define $\mu_1$ and $\mu_2$ to be the marginals on the first and second coordinates with respective distributions $f_1$ and $f_2$. Define $M_{f_1f_2}$ to be the set of perfect matchings from $V_1$ to $V_2$ for which, given a labeled graph, the marginal on edges is $\mu_e$. If $M_{f_1f_2}$ is non-empty, then 
\[
|M_{f_1f_2}|=(N!)\re^{(1+o_N(1))N(\bH(\mu_e)-(\bH(\mu_1)+\bH(\mu_2)))}.
\]
\end{lemma}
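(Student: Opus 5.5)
The count reduces to a multinomial calculation, in the same way as the proof of \Cref{lem:perfect1}. Since $M_{f_1f_2}$ is non-empty, there is a labeling of $V_1$ and of $V_2$ and a matching realizing $\mu_e$. It follows that $\mu_e$ takes values in $\frac1N\bN$, $V_1$ has exactly $N\mu_1(s)$ vertices with label $s$, and $V_2$ has exactly $N\mu_2(t)$ vertices with label $t$. Because only finitely many labels occur, we may restrict $S$ to the support, which has size at most $N$ (if $S$ is allowed to grow with $N$, the error terms below have to be tracked more carefully; I treat $|S|$ as fixed, as in the intended application). With the labelings fixed, a perfect matching from $V_1$ to $V_2$ has edge distribution $\mu_e$ exactly when, for every pair $(s,t)$, it contains exactly $n_{st}\deq N\mu_e(s,t)$ edges joining a label-$s$ vertex of $V_1$ to a label-$t$ vertex of $V_2$.

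The matchings are counted in two stages. First, for each $s$, the $N\mu_1(s)$ vertices of $V_1$ with label $s$ are split into groups of sizes $(n_{st})_t$, recording which label class each one will be matched to. This gives $\prod_s \binom{N\mu_1(s)}{(n_{st})_t}$ choices. Similarly, for each $t$, the $N\mu_2(t)$ vertices of $V_2$ with label $t$ are split into groups of sizes $(n_{st})_s$, giving $\prod_t\binom{N\mu_2(t)}{(n_{st})_s}$ choices. Second, for each pair $(s,t)$, the two groups of size $n_{st}$ are matched bijectively, which gives $\prod_{s,t} n_{st}!$ choices. Every matching arises exactly once in this way, so
\[
|M_{f_1f_2}|=\frac{\prod_s (N\mu_1(s))!\,\prod_t (N\mu_2(t))!}{\prod_{s,t} n_{st}!}.
\]

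It remains to compare this with $N!$ by Stirling's formula, $\log m! = m\log m - m + O(\log(m+1))$. The linear terms $-m$ cancel: the numerator contributes $-2N$, the denominator contributes $+N$, and the leftover $-N$ matches the $-N$ in $\log N!$. For the $m\log m$ terms, write $N\mu\log(N\mu)=N\mu\log N+N\mu\log\mu$. The $\log N$ parts sum to $2N\log N$ in the numerator and $N\log N$ in the denominator, leaving $N\log N$, which again matches $\log N!$. The remaining terms are $N\sum_s\mu_1\log\mu_1+N\sum_t\mu_2\log\mu_2-N\sum\mu_e\log\mu_e$, which equals $N\big(\bH(\mu_e)-\bH(\mu_1)-\bH(\mu_2)\big)$. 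Hence
\[
\log|M_{f_1f_2}| = \log N! + N\big(\bH(\mu_e)-\bH(\mu_1)-\bH(\mu_2)\big) + O(|S|^2\log N).
\]

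The main obstacle is the form of the error term. The error is $o(N)$ as an additive term, but the statement writes it as a multiplicative $(1+o_N(1))$ factor on the entropy term. Following the convention of \Cref{lem:perfect1}, I would interpret the claim as stating an $o(N)$ additive error in the exponent. When $\bH(\mu_e)-\bH(\mu_1)-\bH(\mu_2)$ is bounded away from zero, the two readings agree.
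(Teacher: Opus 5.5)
Your proof is correct, but it is organized differently from the paper's. The paper double counts the set $M'$ of pairs (vertex labeling, perfect matching) with edge distribution $\mu_e$. Fixing the labelings first gives $|M'|=|M_{f_1f_2}|\,\re^{(1+o_N(1))N(\bH(\mu_1)+\bH(\mu_2))}$; this tacitly uses that $|M_{f_1f_2}|$ is the same for every pair of labelings with marginals $\mu_1,\mu_2$. Fixing one of the $N!$ matchings first and then labeling its $N$ edges gives $|M'|=N!\,\re^{(1+o_N(1))N\bH(\mu_e)}$.

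You instead fix the labelings and enumerate matchings directly, by splitting label classes into groups and matching groups bijectively. Written with exact multinomials, the paper's two counts say $\binom{N}{(N\mu_1(s))_s}\binom{N}{(N\mu_2(t))_t}|M_{f_1f_2}|=N!\binom{N}{(n_{st})_{s,t}}$. This rearranges to precisely your closed form, so the two arguments are the same computation viewed from two sides. The double count is shorter and parallels \Cref{lem:perfect1}. Your direct count produces the exact identity and an explicit $O(|S|^2\log N)$ error.

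That explicit error also supports your closing remark. The paper divides two expressions carrying separate $(1+o_N(1))$ factors, one on $\bH(\mu_1)+\bH(\mu_2)$ and one on $\bH(\mu_e)$. So its argument likewise yields only an additive $o(N)$ error in the exponent. The literal multiplicative reading fails when $\bH(\mu_e)=\bH(\mu_1)+\bH(\mu_2)$: for $\mu_e$ uniform on $\{1,2\}^2$, the ratio $|M_{f_1f_2}|/N!$ decays like $N^{-1/2}$ instead of being $1$.

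Your restriction to fixed $|S|$ is also needed, not merely convenient. Take $N$ distinct labels on each side and $\mu_e$ uniform on the diagonal. Then $|M_{f_1f_2}|=1$, while $N!\,\re^{N(\bH(\mu_e)-\bH(\mu_1)-\bH(\mu_2))}=N!/N^N=\re^{-N+O(\log N)}$, so even the additive $o(N)$ form breaks.
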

\begin{proof}
Consider the set $M'$ of labelings of perfect matchings that satisfy $\mu_e$. We will count $|M'|$ in two ways. In the first way, label with marginals $f_1$ and $f_2$, then match edges. Under this counting, we can see $|M'|=|M_{f_1f_2}|\re^{(1+o_N(1))N(\bH(\mu_1)+\bH(\mu_2))}$. 

The second way to count is that there are $N!$ perfect matchings. Given a perfect matching, counting the number of viable labelings gives $|M'|=(N!)\re^{(1+o_N(1))N\bH(\mu_e)}$. Combining these two equalities gives the result.
\end{proof}

We can now use this to compare a star to edges. 
\begin{proof}[Proof of \Cref{lem:typicalentropy}]
Consider a labeling $f:\vec{E}(\cT_{\bd})\to S\times S$, where $\vec{E}(\cT_{\bd})$ is the set of half-edges of $\cT_{\bd}$. We require that if two half-edges $\vec{e_1}$ and $\vec{e_2}$ originate at the same vertex, then $f(\vec{e_1})$ and $f(\vec{e_2})$ share the same first coordinate. 

We construct the configuration model by matching half-edges so that a half-edge of type $i\to j$ with label $(s_1,s_2)$ is paired with a half-edge of type $j\to i$ labeled $(s_2,s_1)$. Define $\cG_\epsilon(N,\bd,\mu)$ to be the set of labeled graphs such that the star and edge marginals are within $\epsilon$ in total variation of $\mu_{C_i}$ and $\mu_{e_{ij}}$ for all pairs $i,j$. We compare $|\cG_\epsilon(N,\bd,\mu)|$ with the total number of graphs in the configuration model, $|\cG(N,\bd)|$. This is within a constant factor of the total number of simple graphs (see \cite[Theorem 2.2]{wormald1999models}). Define $q_i\deq \frac{|V_i|}{N}$ to be the fraction of vertices of type $i$, we have
\be\label{eq:confignumber}
|\cG(N,\bd)|
=
\left(\prod_{i=1}^k \frac{(\bd_{ii} q_i N)!!}{(\bd_{ii}!)^{q_i N}}\right)
\left(\prod_{1\le i<j\le k} \frac{(\bd_{ij} q_i N)!}{(\bd_{ij}!)^{q_i N}(\bd_{ji}!)^{q_j N}}\right),
\ee
obtained by matching all half-edges while accounting for those of the same type.

We can construct a graph in $\cG_\epsilon(N,\bd,\mu)$ by first randomly sampling the labelings such that they have the correct distribution on stars, for which there are $\re^{(1+o_N(1))\,N\sum_i q_i \bH(\mu_{C_i})}$. We then match edges between $V_i$ and $V_j$ in such a way that respects the matching.  Since edges of type $i\to j$ are independent of edges between other type pairs, we may compute $|\cG_\epsilon(N,\bd,\mu)|$ by evaluating, for each $(i,j)$,  $|M_{ij}|$, the number of matchings within $\epsilon$ of $\mu_{e_{ij}}$. Because the entropy of a half-edge equals that of the union of a half-edge with its paired reverse edge, we apply
\Cref{lem:perfect1} to obtain
\[
|M_{ii}|=\frac{(\bd_{ii} q_iN)!!\,
\re^{(1+o(1))\bH(\mu_{e_{ii}})\,\bd_{ii} q_i N/2}}
{(\bd_{ii}!)^{q_iN}\,
\re^{\bH(\mu_{e_{ii}})\,\bd_{ii} q_i N}},
\]
where $o(1)$ is a term that goes to zero when we send $N$ to infinity, then $\epsilon$ to 0. We have divided by $(\bd_{ii}!)^{q_iN}$ because we do not distinguish edges that stem from the same vertex. Similarly, by \Cref{lem:perfect2},
\[
|M_{ij}|=\frac{(\bd_{ij} q_iN)!\,
\re^{(1+o(1))\bH(\mu_{e_{ij}})\,\bd_{ij} q_iN}}
{(\bd_{ij}!)^{q_iN}(\bd_{ji}!)^{q_jN}\,
\re^{(\bd_{ij}q_i+\bd_{ji}q_j)N\,\bH(\mu_{e_{ij}})}}.
\]
Therefore, to estimate $\cG_\epsilon(N,\bd,\mu)$, we have
\begin{align}
\begin{split}\label{eq:Gepsilonest}
|\cG_\epsilon(N,\bd,\mu)|
&= \exp\left((1+o(1))\,N\sum_i q_i \bH(\mu_{C_i})\right)
   \left(\prod_{1\le i\le k} |M_{ii}|\right)
   \left(\prod_{1\le i<j\le k} |M_{ij}|\right)
\\[0.3em]
&=\exp\!\left((1+o(1))\,N\sum_i q_i\Big(\bH(\mu_{C_i})-\frac12\sum_j \bd_{ij}\bH(\mu_{e_{ij}})\Big)\right)\,
|\cG(N,\bd)|,
\end{split}
\end{align}
where in the last line, we use \eqref{eq:confignumber}.

Since $\mu$ is typical, for any $\epsilon>0$ we have
\be\label{eq:almostyp}
\limsup_{N\to\infty} \frac{|\cG_\epsilon(N,\bd,\mu)|}{|\cG(N,\bd)|} \ge c_{\bd},
\ee
for $c_\bd\geq \exp[(1-(\max_{i}\{\sum_{j}\bd_{ij}\})^2)/4]$ the probability a graph sampled from the configuration model is simple.  Combining \eqref{eq:almostyp} with \eqref{eq:Gepsilonest}  yields \eqref{eq:typicalentropy}.

\end{proof}

\section{Entropy of the Gaussian wave}\label{sec:gaussiantyp}
We first need to confirm the rank of the covariance matrix is indeed the size of the boundary. This will also be a  warm-up for the analysis to come in \Cref{sec:heateq}.
\begin{lemma}\label{lem:fullrank}
    For any $\Gamma\in  \{B_k(C), B_k(e)\}_{k\in \bN}$, the rank of $\lim_{\eta\rightarrow 0^+}\Im(G_{\Gamma}^{\lambda+\ri \eta})$ is $|\partial \Gamma|$.
\end{lemma}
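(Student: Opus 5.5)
We need to show that the rank of $\Im G_\Gamma^\lambda$, the $|\Gamma|\times|\Gamma|$ covariance of the Gaussian wave restricted to $\Gamma$, is exactly $|\partial\Gamma|$. The plan is to prove an upper bound from the eigenvector equation and a lower bound by exhibiting $|\partial\Gamma|$ independent directions.

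\emph{Upper bound.} Set $K\deq\lim_{\eta\to0^+}\Im G^{\lambda+\ri\eta}$, which exists and is finite entrywise since $\lambda\notin\fD_\bd$ (\Cref{lem:conetypegf}). The Ward identity gives
\[
(A-\lambda)\,\Im G^{\lambda+\ri\eta}=\eta\,\Re G^{\lambda+\ri\eta}.
\]
The right-hand side tends to $0$ entrywise, so the columns of $K$ satisfy the eigenvector equation at every vertex. For $u\in\Gamma\setminus\partial\Gamma$ all neighbours of $u$ lie in $\Gamma$. Hence each vector $\lambda\delta_u-\sum_{v\sim u}\delta_v$ is supported in $\Gamma$ and lies in the kernel of $K_\Gamma$. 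These vectors are linearly independent: order the interior vertices by decreasing distance from a centre of $\Gamma$, and each vector involves a neighbour (a child, or for the outermost interior layer a boundary vertex) that no earlier vector uses. So $\operatorname{rank}K_\Gamma\le|\Gamma|-|\Gamma\setminus\partial\Gamma|=|\partial\Gamma|$.

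\emph{Lower bound.} The Schur complement formula \eqref{eq:schurcomp} and the tree structure give, for $x,y$ joined by the path $x=x_0,\dots,x_m=y$,
\[
G_{xy}=G_{xx}\prod_{j=1}^{m} G^{(x_{j-1})}_{x_jx_j}\qquad\text{(up to sign)}.
\]
Order the tree so that each boundary vertex $b\in\partial\Gamma$ has the outward cones $\cT^{(p)}_b$ hanging off it outside $\Gamma$, where $p$ is its parent in $\Gamma$. I would show the Gram matrix $K_{\partial\Gamma}$ is nonsingular. This requires care, because the eigenvector equation determines values in $\Gamma$ from boundary values, and the subspace of admissible functions on $\Gamma$ has dimension $|\partial\Gamma|$.

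It suffices to show: if $c\in\bR^{\partial\Gamma}$ satisfies $c^{T}K_{\partial\Gamma}\,c=0$, then $c=0$. Write
\[
c^{T}K\,c=\lim_{\eta\to0^+}\eta\,\|G^{\lambda+\ri\eta}c\|^2
\]
by the Ward identity, and decompose $\ell^2$ into the outward cones at boundary vertices plus the finite region. The decay of $G^{\lambda+\ri\eta}c$ into the cone at $b$ is governed by $G_{bb}$-type factors, and its $\eta$-weighted mass contributes
\[
|(\text{value at }b)|^2\cdot \Im G^{(p)}_{bb}.
\]
This quantity is strictly positive because $\Im G^{(y)}_{xx}(\lambda)>0$ by the definition of $\fD_\bd$. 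Thus vanishing of the quadratic form forces the limiting function $K c$ to vanish at every boundary vertex. Propagating inward with the eigenvector equation then gives $K c\equiv0$ on $\Gamma$.

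The main obstacle is getting from $Kc\equiv 0$ on $\Gamma$ to $c=0$. I would use the explicit two-sided form $\Im G_{xy}=\Im(\ldots)$ along paths, together with the fact that a nonzero combination of columns must have nonzero boundary flux into some cone, to get a contradiction. Equivalently, one can directly compute the determinant of $K$ restricted to $\partial\Gamma$ when $\Gamma$ is a star or an edge. These cases are explicit via \eqref{eq:finitecones}, and the general balls $B_k(\cdot)$ follow by induction on $k$, adding one layer of cones at a time and using the Schur complement again. Making this flux positivity rigorous via the $\eta\to0$ limit is the step I expect to be delicate.
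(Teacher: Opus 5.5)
Your upper bound is correct and supplies a direction the paper leaves implicit. One fix is needed there: your ordering is reversed. With decreasing distance, the child $c$ of $u$ already occurs in earlier vectors such as $\lambda\delta_c-\sum_{v\sim c}\delta_v$; order by increasing distance instead.

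\textbf{The lower bound has a genuine gap.} You set out to show that the principal block $K_{\partial\Gamma}$, the limiting Gram matrix of the boundary columns alone, is nonsingular. This is false in general. If $a$ is a $\lambda$-eigenvector of the adjacency matrix of the finite tree $\Gamma\setminus\partial\Gamma$, then $\sum_u a_u(\lambda\delta_u-\sum_{v\sim u}\delta_v)$ is a nonzero vector supported on $\partial\Gamma$. By your own upper-bound computation it lies in $\ker K_\Gamma$.

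The simplest instance is the $d$-regular tree (where $\fD_\bd=\emptyset$) with $\Gamma=C_o$ and $\lambda=0$. Then $\sum_{i\sim o}K_{ij}=\lambda K_{oj}=0$, so the all-ones vector on $\partial C_o$ is in the kernel of $K_{\partial C_o}$. Explicitly, $K_{ii}=\sqrt{d-1}/d$ and $K_{ij}=-1/(d\sqrt{d-1})$ for $i\neq j$. Equivalently, $\sum_{i\sim o}G_i=\delta_o+zG_o\to\delta_o$ has bounded norm, so $\eta\|Gc\|^2\to0$. The same happens for $B_1(C_o)$ at $\lambda=\pm\sqrt d$.

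So the step you flag as delicate, passing from $Kc\equiv0$ to $c=0$, is actually false at such $\lambda$. So is your claim that the eigenvector equation determines values on $\Gamma$ from boundary values. Your route can be completed only when $\lambda$ is not an eigenvalue of the adjacency matrix of $\Gamma\setminus\partial\Gamma$ (e.g.\ always for an edge), but the lemma is needed for every admissible $\lambda$ and every $k$. Note also that $Kc=0$ already follows from $c^{T}Kc=0$ by positive semidefiniteness alone, so the cone-flux computation was not doing the real work. (The cone mass is $|(Gc)(b)|^2\Im G^{(p)}_{bb}/|G^{(p)}_{bb}|^2$, and it controls the complex value $(Gc)(b)$, not $(Kc)(b)$.)

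\textbf{The missing idea} is to use the parent column, with a complex $\eta$-dependent coefficient, to cancel the part of $G_x$ that leaks back into $\Gamma$.

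For $x\in\partial\Gamma$ with parent $y\in\Gamma$, \eqref{eq:walkdecomp} gives $G_x-\frac{G_{xy}}{G_{yy}}G_y=G^{(y)}_x$, which is supported on the forward cone of $x$. Distinct boundary vertices have disjoint cones. Hence $\{\sqrt\eta\,G^{(y)}_x\}_{x\in\partial\Gamma}$ is an orthogonal family in the span of $\{\sqrt\eta\,G_u\}_{u\in\Gamma}$, with squared norms $\Im G^{(y)}_{xx}$. These stay bounded below as $\eta\to0^+$ because $\lambda\notin\fD_\bd$.

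Since $G_{xy}/G_{yy}=-G^{(y)}_{xx}$ stays bounded, the coefficient map $T$ has bounded norm. Therefore $\sigma_{|\partial\Gamma|}(\sqrt\eta\,G_\Gamma)\ge\min_x(\Im G^{(y)}_{xx})^{1/2}/\|T\|$ uniformly in $\eta$, and the limit has rank at least $|\partial\Gamma|$. This is the paper's argument.

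In your language: apply the cone-mass bound to the complex vectors $\delta_x+G^{(y)}_{xx}\delta_y$, whose image under $G$ lives in a single cone. Do not apply it to real vectors supported on $\partial\Gamma$.
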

\begin{proof}
    For any $\eta\geq 0$, $\Im[G_{\Gamma}^{\lambda+\ri \eta}]$ is the Gram matrix of $\{\sqrt{\eta}G_x^{\lambda+\ri \eta}\}_{x\in \Gamma}$, by \eqref{eq:wardidentity}. Therefore, it is sufficient to lower bound the minimum singular value of this latter matrix.
We construct an explicit linear transformation giving 
$|\partial\Gamma|$ orthogonal vectors. 
Let $x\in\partial\Gamma$ and let $y\in\Gamma$ denote its parent. 
By the Schur complement formula \eqref{eq:walkdecomp},
\begin{equation}\label{eq:span}
G_x^{(y)}
= G_x - \frac{G_{xy}}{G_{yy}}\, G_y,
\end{equation}
where $G^{(y)}=[A_{\cT^{(y)}}-z\dI]^{-1}$ is the Green's function of the graph with $y$ removed.

If $x_1\neq x_2$ are distinct vertices of $\partial\Gamma$ with respective
parents $y_1,y_2\in\Gamma$, then the vectors 
$G_{x_1}^{(y_1)}$ and $G_{x_2}^{(y_2)}$ are supported on disjoint forward cones,
hence are orthogonal. Therefore, $\{\sqrt{\eta}\, G_{x}^{(y)}\}_{x\in\partial\Gamma}$ 
is an orthogonal family of size $|\partial\Gamma|$, and thus linearly
independent for every $\eta>0$.

Define a linear operator $T$ by prescribing its rows as $
(TG_\Gamma)_x
= G_x - \frac{G_{xy}}{G_{yy}} G_y$ for $x\in\partial\Gamma$, 
so that $T(\sqrt{\eta}\, G_\Gamma)
= \{\sqrt{\eta}\, G_x^{(y)}\}_{x\in\partial\Gamma}$. 
Applying the singular value inequality 
$\sigma_{\min}(M)\ge \sigma_{\min}(TM)/\|T\|$ yields
\[
\sigma_{|\partial\Gamma|}
(\sqrt{\eta}\, G_\Gamma)
\;\ge\;
\frac{
\min_{x\in\partial\Gamma}
\Im[ G_{xx}^{(y)}]
}{
\|T\|
},
\]
where we used that $\|\sqrt{\eta}\, G_x^{(y)}\|^2
=\Im[ G_{xx}^{(y)}]$. 
As $\lambda\notin\mathfrak D_\bd$, each quantity 
$\Im [G_{xx}^{(y)}]$ admits a finite positive limit as $\eta\to 0^+$.
Moreover, $\|T\|$ remains uniformly bounded in this limit since it
depends only on ratios $G_{xy}/G_{yy}$, which also converge. Therefore, the smallest nonzero singular value of $\sqrt{\eta}\, G_\Gamma$
remains bounded away from zero as $\eta\to 0^+$.
\end{proof}

Given \eqref{eq:anyeta} and \eqref{eq:deltatoD}, we can analyze the entropy of the Gaussian by dealing directly  with the differential entropy. 
\begin{proof}[Proof of \Cref{lem:gaussiantyp}]
As a Gaussian vector, the entropy is defined by the covariance matrix $\Sigma$ and the Gram matrix $M$. Specifically, define $\Sigma^\eta$ to be the matrix with $\Sigma_{uv}^\eta=\Cov(\langle  \Psi^\lambda,\eta G^{\lambda+\ri \eta}_u\rangle,\langle  \Psi^\lambda,\eta G^{\lambda+\ri \eta}_v\rangle )$, and define $M$ to be the matrix with $M_{uv}=\eta^2\langle G^{\lambda+\ri \eta}_u, G^{\lambda+\ri \eta}_v\rangle$. Assuming $M$ is full rank, the multivariate Gaussian of dimension $m$ has differential entropy 
\[
\frac{m}2\log(2\pi e)+\frac12\log\det(\Sigma)-\frac12\log\det(M).
\]

Examining $M$, it reduces by a Ward identity \eqref{eq:wardidentity} to
\be\label{eq:Mreduction}
M_{uv}=\eta^2(G^{\lambda +\ri \eta}_u)^*G^{\lambda +\ri \eta}_v=\eta\Im[G^{\lambda+\ri \eta}_{uv}].
\ee
 On the other hand, by \eqref{eq:anyeta}, 
\be\label{eq:sigmadef}
\Sigma_{uv}=\lim_{\zeta\rightarrow 0}\Im(G_{uv}^{\lambda+\ri \zeta}).
\ee
Therefore, for any set $\Gamma$, we can use \eqref{eq:deltatoD} to write
\begin{align*}
&\phantom{{}={}}\bD(\mu_{\Gamma}^\eta)\\
&= \bD(\Psi_{\Pi(\Gamma)})\\
&=\frac{|\partial \Gamma|}{2}\log(2\pi e)+\frac12\log\det(\Sigma_{\Pi(\Gamma)})-\frac12\log\det(M_{\Pi(\Gamma)})\\
&=\frac {|\partial \Gamma|}2\log(2\pi e)+\frac12\log\det(\lim_{\zeta\rightarrow 0^+}\Im(G_{\Pi(\Gamma)}^{\lambda+\ri \zeta}))-\frac{|\partial \Gamma|}{2}\log(\eta)-\log\det(\Im(G_{\Pi(\Gamma)}^{\lambda+\ri \eta})).
\end{align*}

In the difference of entropies $\bE_o \left[\bD(\mu_{B_k(C_o)}^\eta)-\frac12 \sum_{i\sim o} \bD(\mu_{B_k(e_{oi})}^\eta)\right]$,  for $o\sim i$, the boundary vertices of distance $k$ from $i$ and $k+1$ from $o$ appear once on the left-hand side, and twice on the right-hand side, each with prefactor $1/2$. Therefore, the terms $\frac{|\partial \Gamma|}{2}(\log2\pi e-\log \eta)$ cancel exactly, leaving us with
\begin{align}
\label{eq:detcompare}
\begin{split}
  &\phantom{{}={}}\bE_o \left[\bD(\mu_{B_k(C_o)}^\eta)-\frac12 \sum_{i\sim o} \bD(\mu_{B_k(e_{oi})}^\eta) \right]\\
  &=\frac12\bE_o \left[\log\det(\lim_{\zeta\rightarrow 0^+}\Im(G_{\Pi(B_k(C_o))}^{\lambda+\ri \zeta}))-\log\det(\Im(G_{\Pi(B_k(C_o))}^{\lambda+\ri \eta}))\right]\\
  &- \frac{1}{4} \bE_o\left[\sum_{i\sim o}\left(\log\det(\lim_{\zeta\rightarrow 0^+}\Im(G_{\Pi(B_k(e_{ij}))}^{\lambda+\ri \zeta}))-\log\det(\Im(G_{\Pi(B_k(e_{ij}))}^{\lambda+\ri \eta}))\right)\right] .
  \end{split}
\end{align}
By \Cref{lem:fullrank}, $\Pi$ exactly projects away the 0 eigenspace. Thus, the eigenvalues of $\lim_{\zeta\rightarrow 0^+}\Im(G_{\Pi(\Gamma)}^{\lambda+\ri \zeta})$ are uniformly bounded away from 0. Moreover, they are uniformly bounded as $\lambda\notin\fD_{\bd}$.  Therefore, entrywise convergence implies that 
as $\eta\rightarrow 0$, \eqref{eq:detcompare} converges to 0.

\end{proof}
\section{Maximizing the entropy inequality}\label{sec:heateq}
We wish to use a de Bruijn identity for a heated random variable. Therefore, it will be convenient to reduce to only considering smooth processes. 
\begin{prop}\label{prop:smoothmaximizer}
    $\Psi^\lambda$ is the unique maximizer of $\Delta_0(\Phi^\lambda)$ over smooth eigenvector processes $\Phi^\lambda$.
\end{prop}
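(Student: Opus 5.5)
We run the heat-flow argument of \cite[Section 11]{backhausz2019almost}, with the Green's function supplying the basis. Fix a smooth eigenvector process $\Phi^\lambda$, independent of a Gaussian wave $\Psi^\lambda$. For $t\in[0,1]$ set
\[
\Phi_t\deq \sqrt{1-t}\,\Phi^\lambda+\sqrt{t}\,\Psi^\lambda .
\]
Each $\Phi_t$ is a smooth eigenvector process: it is centered, automorphism invariant, and has covariance $\Im[G^\lambda]$. It interpolates between $\Phi_0=\Phi^\lambda$ and $\Phi_1\overset{d}{=}\Psi^\lambda$. By \eqref{eq:smoothdelta}, $\Delta_0(\Phi_t)$ is a finite combination of differential entropies of the projected laws on stars $C_o$ and edges $e_{oi}$. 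Since every $\Phi_t$ has the same covariance, the plan is to prove
\[
\frac{\rd}{\rd t}\Delta_0(\Phi_t)\ge 0,
\]
with equality for all $t$ only when $\Phi^\lambda$ is Gaussian. Continuity at $t=0,1$ then gives $\Delta_0(\Phi^\lambda)\le\Delta_0(\Psi^\lambda)$ with equality only at the Gaussian, which is the claim.

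\emph{Step 1 (de Bruijn).} We reparametrize so that the Gaussian component is added as a heat flow with covariance $\Im[G^\lambda_\Gamma]$ restricted to $\Pi(\Gamma)$. By \Cref{lem:fullrank} this covariance is nondegenerate on the $|\partial\Gamma|$-dimensional subspace. The de Bruijn identity (\Cref{lem:debruijn}) then writes the derivative of $\bD$ on each of $C_o$ and $e_{oi}$ as $\tfrac12\mathrm{tr}(\Sigma_\Gamma J_\Gamma)$, up to the standard correction from the $\sqrt{1-t}$ scaling. Here $J_\Gamma$ is the Fisher information matrix of the marginal in the $\Pi(\Gamma)$ coordinates. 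This correction depends only on $|\partial\Gamma|$, and it cancels in $\Delta_0$ exactly as the $\log a$ terms do.

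\emph{Step 2 (canonical basis).} For a star, use the decomposition from the proof of \Cref{lem:fullrank}. For each neighbour $x$ of $o$, the vector $\sqrt\eta\,G_x^{(o)}$ is supported on the cone at $x$, and these vectors are mutually orthogonal. As $\eta\to0$, the corresponding linear functionals of $\Phi_t$ are the ``incoming'' coordinates $\xi_x=\langle\Phi_t,\sqrt\eta\,G_x^{(o)}\rangle$ via \eqref{eq:anyeta}. For an edge $e_{oi}$, use the two coordinates $\xi_i$ (cone at $i$ from $o$) and $\xi_o'$ (cone at $o$ from $i$). The star's Gaussian covariance in the $\xi$-coordinates is then diagonal, with entries $\Im[G^{(o)}_{xx}]$, finite and positive since $\lambda\notin\fD_\bd$. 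The key relation to establish is that the Fisher information of each edge marginal is the same quadratic form as the star's, restricted to a coordinate pair. Concretely, we compare score functions: the edge score is the conditional expectation of the star score. We aim for the inequality
\[
\mathrm{tr}(\Sigma_{C_o}J_{C_o})\ \ge\ \tfrac12\sum_{i\sim o}\mathrm{tr}(\Sigma_{e_{oi}}J_{e_{oi}})
\]
after averaging over the root $o$ and pairing each edge with its two endpoints by unimodularity. The $\tfrac12$ matches the edge coefficient in $\Delta_0$. To prove it, write $\Sigma_{e_{oi}}$ as $P\Sigma_{C_o}P^{T}$ for the projection $P$ of the star's basis onto the edge's two-dimensional space, and use $J_{e}\preceq (PJ_{C}^{-1}P^T)^{-1}$-type inequalities (Fisher information decreases under marginalization and conditional expectation). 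Equality forces the scores to be linear, i.e.\ the law is Gaussian.

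\emph{Main obstacle.} The hard step is Step 2: verifying that the Green's-function basis makes the star and edge covariances compatible, so that the marginalization inequality lands exactly with constant $\tfrac12$. Without vertex transitivity we cannot argue by symmetry. Instead we will lean on the Schur-complement identities \eqref{eq:span} and \eqref{eq:finitecones}. The specific identity to check is that the edge functional $\xi_o'$ is, in the limit, a linear combination of the star coordinates $\xi_x$, $x\neq i$, with coefficients $G^{(i)}_{oo}$-weighted ratios. That identity would make $P$ explicit.

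The equality case then reduces to Gaussianity of $(\xi_x)_{x\sim o}$. Since these coordinates span the star subspace, this gives Gaussian star marginals, and hence uniqueness.
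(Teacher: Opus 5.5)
\textbf{Main gap: the equality case.} Your inequality half follows the paper's route: heat flow, de Bruijn (\Cref{lem:debruijn}), the cone-supported coordinates $\wt{\xi}_x^{(o)}$, Fisher information decreasing under marginalization, and unimodular averaging. The equality case, however, has a genuine gap, because equality does \emph{not} force linear scores.

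For a directed edge $(o,i)$, consider the marginalization bound on the Fisher information in the $\wt{\xi}_i^{(o)}$ direction. It is tight iff the star score in that direction depends only on the edge coordinates, i.e.\ iff, conditioned on $\xi_o^{(i)}$, the variable $\xi_i^{(o)}$ is independent of $\{\xi_\chi\}_{\chi\in U_{oi}}$. That is a conditional independence, not Gaussianity. For example, suppose the incoming coordinates $(\xi_x^{(o)})_{x\sim o}$ are independent with arbitrary smooth non-Gaussian laws. Then the score in direction $\wt{\xi}_i^{(o)}$ is a function of $\wt{\xi}_i^{(o)}$ alone, so equality holds. It also persists for every $t$, since the Gaussian wave's incoming coordinates are independent too. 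So a vanishing derivative yields no Gaussianity star by star. Gaussianity has to be extracted globally, from these independences combined with the linear relations \eqref{eq:nextstar} linking adjacent stars and a stability argument.

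This is the bulk of the paper's proof, and your proposal has no counterpart for it:
\begin{enumerate}
\item \Cref{lem:edgetypes} turns the conditional independence, holding for all $t$, into the Type 1/Type 2 dichotomy on each edge.
\item \Cref{lem:darmois} handles stars with mixed types.
\item \eqref{eq:nextstar}, \eqref{eq:nextstar2} and \eqref{eq:fardecomp} propagate the decomposition along paths, including through degree-$2$ vertices. \Cref{lem:smallcoefficients} guarantees that these decompositions are nontrivial, with coefficients of modulus below $1$.
\item Finite cone type then gives a finite system $\zeta_i\overset{d}{=}\sum_j\alpha_{i,j}\zeta_j+\alpha_i\zeta$. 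The entropy power inequality, applied to the variable of least entropy, forces Gaussianity.
\end{enumerate}

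\textbf{Secondary point: the constant $\tfrac12$.} The mechanism you describe for this constant cannot work star by star, because the per-star inequality is false. Take the $d$-regular case with $d\ge 3$, and a star law that is standard Gaussian except along $\mathbf{1}/\sqrt{d}$ in whitened coordinates, where it is non-Gaussian with variance $1$ and Fisher information $1+K$, $K>0$. Then $\mathrm{tr}J_{C_o}=d+K$. Each edge marginal, however, has Fisher trace $2+K$, so $\tfrac12\sum_{i}\mathrm{tr}J_{e_{oi}}=d+\tfrac{dK}{2}>\mathrm{tr}J_{C_o}$. Hence bounding the whole edge $e_{oi}$ by the star at $o$ alone, e.g.\ via $J_{e_{oi}}\preceq (PJ_{C_o}^{-1}P^{T})^{-1}$, cannot close.

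What the paper's half-edge averaging amounts to is different. You bound only the $\wt{\xi}_i^{(o)}$ direction of $e_{oi}$ by the star at $o$. By mass transport, you charge the $\wt{\xi}_o^{(i)}$ direction to the star at $i$, where it is a coordinate direction. Then $\sum_{i\sim o}J_{C_o}(\wt{\xi}_i^{(o)})=\mathrm{tr}J_{C_o}$ by orthonormality. This needs only the orthogonality of $\xi_i^{(o)}$ and $\xi_o^{(i)}$, not an explicit $P$. The identity \eqref{eq:nextstar} you single out as the main obstacle is really needed in the equality analysis above.
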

We first show how this is sufficient. 
\begin{proof}[Proof of \Cref{lem:heateq}]
    Recall that by \eqref{eq:gaussianprocess}, $\Psi^\lambda$ is a weak limit of factor i.i.d. processes. Therefore, if $\Phi^\lambda$ is a typical eigenvector process, by \cite[Lemma C.1]{backhausz2019almost}, so is $\frac1{\sqrt{2}}(\Phi^\lambda+\Psi^\lambda)$, where the sum is of independent processes. This is a smooth eigenvector process, so by \Cref{prop:smoothmaximizer}, it must be Gaussian. As $\frac1{\sqrt{2}}(\Phi^\lambda+\Psi^\lambda)$ and $\Psi^\lambda$ are both Gaussian, $\Phi^\lambda$ must be the Gaussian wave. 
\end{proof}

In the rest of this section, we give the proof of \Cref{prop:smoothmaximizer}. We divide this into steps. First, we want to choose a basis of our probability space that is easy to work with.

\subsubsection*{Decomposition into orthogonal parts}

\begin{figure}\label{fig:fig1}
    \centering
    \includegraphics[width=0.6\linewidth]{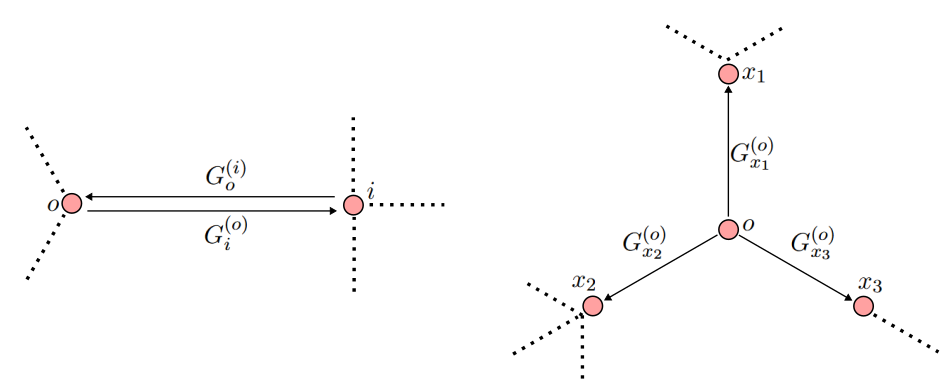}
    \caption{The decomposition of the edge and the star into orthogonal components as given in \eqref{eq:reducedcovar}.}
\end{figure}

 We want a canonical basis to associate with the Green's function. 
Therefore, we define 
\[\xi_y^{(x)}\deq \lim_{\eta\rightarrow 0^+}-\ri \eta\langle \Phi^\lambda,G_y^{(x)}(\lambda+\ri \eta)\rangle=\langle \Phi^\lambda,\delta_y-\frac{G_{yx}}{G_{xx}}\delta_x\rangle.
\]  
Similarly, we can define
\be\label{eq:infinnerprod}\xi_o\deq \lim_{\eta\rightarrow 0}-\ri \eta\langle \Phi^\lambda,G_o\rangle,\qquad \xi_\chi\deq \lim_{\eta\rightarrow 0}-\ri \eta\langle \Phi^\lambda,\chi\rangle.\ee 
where $\chi$ is any linear combination of $\{G_x\}_{x\in V(\cT_\bd)}$. Moreover, we will be interested in the inner product of such $\chi$; therefore, we take our inner product as \[\langle \chi_i,\chi_j\rangle\deq \lim_{\eta\rightarrow 0} \eta \chi_i^*\chi_j,\] which is the correct normalization to guarantee nontrivial inner products. Throughout the rest of this section, unless otherwise specified, we write $G\deq\lim_{\eta\rightarrow 0^+}G^{\lambda+\ri \eta}$. For any $\xi$, we will write the normalized version as
\[
\wt{\xi}\deq \frac{\xi}{\sqrt{\bE[|\xi|^2]}}.
\]

 For $y_1\neq y_2$ that both neighbor $x$, we once again use that $\Im[G_\Gamma^{\lambda+\ri \eta}]$ is the Gram matrix of $\{\sqrt{\eta}G_x^{\lambda+\ri\eta}\}$. This gives that if $y_1,y_2\sim x$ but $y_1\neq y_2$, then
\[
\bE\left[\overline{\xi_{y_1}^{(x)}}\xi_{y_2}^{(x)}\right]=\lim_{\eta\rightarrow 0}\eta\bE[\overline{G_{y_1}^{(x)}(\lambda+\ri \eta)}G_{y_2}^{(x)}(\lambda+\ri \eta)]=0,
\]
as for every $\eta$, $G_{y_1}^{(x)}$ and $G_{y_2}^{(x)}$ are supported on disjoint forward cones. 
Therefore, the $\{\wt{\xi}_x^{(y)}\}$ give us an isotropic basis of both the space spanned by the Green's function of the star $G_C$ and the basis spanned by the Green's function of the edge $G_e$. Since entropy is invariant under fixed full-rank linear changes of coordinates, $\Delta_0$ reduces to a finite-dimensional functional of the joint law of these boundary coordinates. Thus to show \Cref{prop:smoothmaximizer}, it is sufficient to show that the fixed linear transform (full rank by \Cref{lem:fullrank})
\be\label{eq:reducedcovar}
 \bE_o\left[\bD((\wt{\xi}_{i}^{(o)})_{i\sim o})-\frac12 \sum_{i\sim o} \bD(\wt{\xi}_{i}^{(o)},\wt{\xi}_{o}^{(i)})\right]
\ee
is maximized over smooth distributions by the Gaussian.

\subsubsection*{Maximizers must have some independence.}
 We first show that any maximizer has independence between many directions. For any constant $c$,
 \[
\Delta_0(c\Phi^\lambda(G))=\Delta_0(\Phi^\lambda(G)),
 \]
 since differential entropy shifts by $\log|c|$ per coordinate and these shifts cancel in the star–edge difference.
 Therefore, it is sufficient to show that when $\Phi^\lambda$ is not the Gaussian wave,  $\Delta_0(\Phi^\lambda)<\Delta_0(\Phi^\lambda+\sqrt{t}\Psi^\lambda)$ for some $t>0$, where $\Psi^\lambda$ is an independent copy of the Gaussian wave on $\cT_{\bd}$. For this we will use de Bruijn's identity that gives the change in entropy in terms of Fisher information. A simplified version is as follows.
\begin{lemma}\label{lem:debruijn}[Theorem 17.7.2 of \cite{cover1999elements}]
    For any distribution $\hat{\Phi}$ with smooth density function $f:\bR^m\rightarrow \bR$, we add an isotropic Gaussian vector $\hat\Psi$. Then
\[
\frac{\rd}{\rd t} \bD(\hat\Phi+\sqrt{t} \hat\Psi)=\frac12\sum_{i=1}^m\int_{\bR^m}  \frac{\left(\partial_{i} f(x_1,\ldots x_m)\right)^2}{f(x_1,\ldots x_m)} \rd x_1\cdots \rd x_m.
\]
\end{lemma}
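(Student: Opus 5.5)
The plan is the classical heat-equation argument. Let $f_t$ denote the density of $\hat\Phi+\sqrt t\,\hat\Psi$, so $f_t=f*\gamma_t$, where $\gamma_t(x)=(2\pi t)^{-m/2}\re^{-|x|^2/(2t)}$. I read the identity as holding at every $t\ge0$ with $f$ replaced by $f_t$; for $t=0$ it is the right derivative with $f_0=f$. This is the form used in the heating argument, since we only need the sign and size of the derivative along the flow.

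\emph{Step 1 (heat equation).} The Gaussian kernel satisfies $\partial_t\gamma_t=\frac12\Delta\gamma_t$. Differentiating under the convolution, which is allowed for $t>0$ because $\gamma_t$ and all its derivatives are integrable, gives
\[
\partial_t f_t=\tfrac12\Delta f_t .
\]
For $t>0$ the density $f_t$ is $C^\infty$ and strictly positive. If $\hat\Phi$ has a finite second moment, which holds for the marginals of an eigenvector process because their covariance is finite, then $f_t$ and its derivatives have Gaussian-type tails.

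\emph{Step 2 (differentiate the entropy).} Formally,
\[
\frac{\rd}{\rd t}\Big(-\int f_t\log f_t\Big)=-\int \partial_t f_t\,(\log f_t+1)=-\tfrac12\int \Delta f_t\,(\log f_t+1).
\]
The constant term vanishes because $\int\Delta f_t=0$. Integrating by parts in each coordinate $x_i$ then gives
\[
\tfrac12\sum_i\int \frac{(\partial_i f_t)^2}{f_t}.
\]
This is the claimed Fisher-information formula.

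\emph{Step 3 (justification).} This is the main obstacle: justifying the interchange of $\frac{\rd}{\rd t}$ with the integral, and showing that the boundary terms $\partial_i f_t\log f_t$ vanish at infinity. I would first work on a ball of radius $R$ and let $R\to\infty$. The boundary terms vanish because $|\partial_i f_t|\lesssim$ a Gaussian tail, while $|\log f_t|$ grows at most quadratically; the quadratic bound follows from a lower bound on $f_t$ obtained by comparison with $\gamma_t$ on a set of positive $\hat\Phi$-mass. For the derivative interchange, I would dominate $|\partial_t f_t(\log f_t+1)|$ uniformly for $t$ in compact subsets of $(0,\infty)$ by an integrable function, using the same bounds.

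\emph{Step 4 (the endpoint $t=0$).} Integrate the identity over $[s,t]$ with $s>0$, then let $s\to0^+$. This uses continuity of $\bD(f_s)\to\bD(f)$ and lower semicontinuity, in fact continuity for smooth $f$ with finite Fisher information, of the Fisher information $s\mapsto J(f_s)$. Dividing by $t$ and letting $t\to0^+$ recovers the right derivative at $0$.
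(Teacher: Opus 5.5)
Your route is the one the paper relies on. The paper gives no argument of its own: it cites Cover--Thomas, Theorem~17.7.2, and that proof is exactly your heat-equation computation in Steps 1--2. The right-hand side is read as the Fisher information of $f_t$, and the displayed formula is the right derivative at $t=0$. Your added finite-second-moment hypothesis is also theirs, and it is available in the paper's application because eigenvector processes have covariance $\Im[G]$.

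One justification in Step 3 is false and should be replaced. A finite second moment does \emph{not} give $f_t$ or $\partial_i f_t$ Gaussian-type tails. For the smooth finite-variance density $f\propto(1+|x|^2)^{-(m+3)/2}$, $f_t$ decays only like $|x|^{-m-3}$. What you need is integrability, not Gaussian decay.

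\begin{enumerate}
\item The estimates $|u|\,\gamma_t(u)\le C_t\,\gamma_{2t}(u)$ and $|\Delta\gamma_t(u)|\le C_t\,\gamma_{2t}(u)$ give $|\nabla f_t|+|\partial_t f_t|\le C_t f_{2t}$.
\item For $t\in[a,b]\subset(0,\infty)$, $C_t$ is uniform and $\gamma_{2t}\le (b/a)^{m/2}\gamma_{2b}$.
\item Your bound $|\log f_t(x)|\le C(1+|x|^2)$ is correct, uniformly on $[a,b]$. Combined with the above, both $|\partial_t f_t|\,|\log f_t+1|$ and $|\nabla f_t|\,|\log f_t|$ are dominated by $C f_{2b}(x)(1+|x|^2)$.
\item This dominating function is integrable precisely because $\hat\Phi+\sqrt{2b}\,\hat\Psi$ has a finite second moment.
\end{enumerate}

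This domination justifies differentiating under the integral. Integrability of $|\nabla f_t|\,|\log f_t|$ also forces the flux through $\partial B_R$ to vanish along some sequence $R_n\to\infty$. That is all the integration by parts needs, and it shows $J(f_t)<\infty$ for $t>0$.

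In Step 4 the two continuity facts you invoke are true, but you should state why.
\begin{itemize}
\item The score of $f_s$ is the conditional expectation of the score of $f$ given $\hat\Phi+\sqrt{s}\,\hat\Psi$, so Jensen gives $J(f_s)\le J(f)$. With lower semicontinuity this yields $J(f_s)\to J(f)$.
\item $\bD(f_s)\ge \bD(f)$ holds because adding independent noise increases entropy. Entropy is upper semicontinuous under weak convergence with convergent second moments; write it as minus relative entropy to a Gaussian plus a second-moment term. Together these give $\bD(f_s)\to\bD(f)$.
\end{itemize}
With these repairs the proof is complete.
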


To apply this, we need to compare the density function of an edge and the density function of the star. Specifically, we want to compare the change in distribution of $(\xi_{j}^{(o)})_{j\sim o}$ versus that of  $(\xi_{i}^{(o)},\xi_{o}^{(i)})$ as we take $\Phi^\lambda+\sqrt{t}\Psi^\lambda$.
Now fixing $i\sim o$, $\xi_{i}^{(o)}$ is contained in both spaces. On the other hand, to write $\xi_{o}^{(i)}$ in terms of the variables associated with $(\xi_{j}^{(o)})_{j\sim o}$, we note that by \eqref{eq:gffactor}, for $v\in V(\cT_\bd\backslash \{o,i\})$,
\begin{equation}\label{eq:eqforfig}
    G_{ov}^{(i)}=-\sum_{x\sim o, x\neq i} G_{oo}^{(i)}G_{xv}^{(o)}.
\end{equation}
Therefore, as $\lambda\notin \fD_\bd$, $\lim_{\eta\rightarrow 0^+}G_x^{\lambda+\ri \eta}$ has bounded entries, but, by the Ward Identity \eqref{eq:wardidentity}, $\lim_{\eta\rightarrow 0^+}G_x^{\lambda+\ri \eta}$ also has infinite norm. Therefore, 
\be\label{eq:nextstar}
 \xi_{o}^{(i)}=-G_{oo}^{(i)}\sum_{x\sim o, x\neq i} \xi_x^{(o)}.
\ee

 Thus, for our fixed $i\sim o$, we take $U_{oi}$ to be the space $\{\chi\in G_{C_o}:\chi\perp G_{o}^{(i)},G_{i}^{(o)}\}$ (where we mean perpindicular in the sense of \eqref{eq:infinnerprod}). We then will analyze the $d_o$-dimensional density of $\Phi^*G_{C_o}$ by taking $\chi_1,\ldots \chi_{d_o-2}$ as any orthonormal basis of $U_{oi}$, then considering $(\wt{\xi}_{o}^{(i)},\wt{\xi}_{i}^{(o)},\wt{\xi}_{\chi_1},\ldots, \wt{\xi}_{\chi_{d_o-2}})$.
Denote the density function of this random vector as $f(z_1,z_2,\bw)$. We then have
\begin{align}
\begin{split}
&\phantom{{}={}}\int_{\bR^2}  \frac{\left(\partial_{1} f_{e_{oi}}(z_1,z_2)\right)^2}{f_{e_{oi}}(z_1,z_2)} \rd z_1\rd z_2\\
&=\int_{\bR^2}  \frac{\left( \int_{U_{oi}}\partial_{1}f_{C_o}(z_{1},z_2,\bw)\rd \bw\right)^2}{\int_{U_{oi}}f_{C_o}(z_1,z_2,\bw)\rd \bw} \rd z_1\rd z_2\\
&=\int_{\bR^2}  \left( \int_{U_{oi}}\frac{\partial_1f_{C_o}(z_1,z_2,\bw)}{f_{C_o}(z_1,z_2,\bw)}\cdot\frac{f_{C_o}(z_1,z_2,\bw)}{\int_{U_{oi}}f_{C_o}(z_1,z_2,\bw)}\rd \bw\right)^2 \left(\int_{U_{oi}}f_{C_o}(z_1,z_2,\bw)\rd \bw\right)\rd z_1\rd z_2\\
&\leq \int_{\bR^2}  \int_{U_{oi}}\left( \frac{\partial_1f_{C_o}(z_1,z_2,\bw)}{f_{C_o}(z_1,z_2,\bw)}\right)^2\cdot\frac{f_{C_o}(z_1,z_2,\bw)}{\int_{U_{oi}}f_{C_o}(z_1,z_2,\bw)\rd \bw}\rd \bw \left(\int_{U_{oi}}f_{C_o}(z_1,z_2,\bw)\rd \bw\right)\rd z_1 \rd z_2\\
&= \int_{\bR^2}\int_{U_{oi}}  \frac{\left( \partial_1f_{C_o}(z_1,z_2,\bw)\right)^2}{f_{C_o}(z_1,z_2,\bw)}\rd \bw\rd z_1\rd z_2\\
&= \int_{\bR^{d_o}}   \frac{\left( \partial_1f_{C_o}(z_1,z_2,\bw)\right)^2}{f_{C_o}(z_1,z_2,\bw)}\rd \bw\rd z_1\rd z_2.
\end{split}
\end{align}

\begin{figure}\label{fig:fig2}
    \centering
    \includegraphics[width=0.6\linewidth]{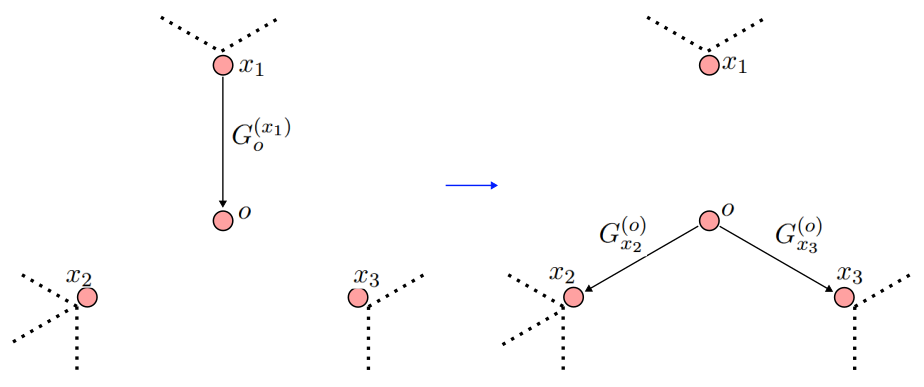}
    \caption{In \eqref{eq:eqforfig}, we can write the Green's function going into a vertex as a sum over Green's functions leaving it.}
    
\end{figure}

In the middle, we use a Jensen inequality. Taking the above inequality, we can take the expectation over half-edges according to the unimodular measure. This then becomes
\begin{align}\label{eq:entropyderivativeineq}
    \bE_o\left[ \sum_{j=1}^{d_o}\int_{\bR^{d_o}}   \frac{\left( \partial_jf_{C_o}(z_1,z_2,\ldots,z_{d_o})\right)^2}{f_{C_o}(z_1,z_2,\ldots,z_{d_o})}\rd \mathbf{z}\right]\geq \frac12\bE_o\left[\sum_{i\sim o} \int_{\bR^{2}}   \frac{\left( \partial_1f_{e_{oi}}(z_1,z_2)\right)^2+\left(\partial_2f_{e_{oi}}(z_1,z_2)\right)^2}{f_{e_{oi}}(z_1,z_2)}\rd z_1\rd z_2\right].
\end{align}

We denote by $\nu_{(t)}$ the probability measure of $\Psi^\lambda+\sqrt{t}\Psi^\lambda$.  For $\Phi^\lambda$ to be typical,  by \Cref{lem:typicalentropy} and \Cref{lem:gaussiantyp},\[\lim_{\zeta\rightarrow 0}\Delta_0(\nu_{(0)}^\zeta)\geq 0= \lim_{t\rightarrow \infty}\lim_{\zeta_{\rightarrow 0}}\Delta_0(\nu_{(t)}^\zeta).\]
By \eqref{eq:entropyderivativeineq}, $t\mapsto \Delta_0(\nu_{(t)})$ is nondecreasing. Therefore, recalling \eqref{eq:samedelta}, for $\Phi^\lambda$ to be typical, \eqref{eq:entropyderivativeineq} must be an equality for every $t$. Thus  for every $t\geq 0$ and every edge $(o,i)$, conditioned on $\xi_{o}^{(i)}$, the variables $\xi_{o}^{(i)}$ and $\{\xi_\chi\}_{\chi\in U_{oi}}$ are mutually independent. 
In fact, we can say something even stronger using the following lemma.

\begin{lemma}[Lemma A.6 of \cite{backhausz2019almost}]\label{lem:edgetypes}
Assume that, for every $0\leq t\leq \infty$, the following holds for the process $\Phi^\lambda+\sqrt{t}\Psi^\lambda$: conditioned on $\xi_{x}^{(y)}$, the random variables $\xi_{y}^{(x)}$ and $\{\xi_\chi\}_{\chi\in U_{xy}}$ are mutually independent. Then one of the following must be true at $t=0$:
\begin{enumerate}
\item $\xi_{x}^{(y)}$ is independent of $\xi_{y}^{(x)}$ and $\{\xi_\chi\}_{\chi\in U_{xy}}$ \quad(Type 1),
\item $\{\xi_\chi\}_{\chi\in U_{xy}}$ is independent of $\xi_{y}^{(x)}$ and $\xi_{x}^{(y)}$ \quad(Type 2).
\end{enumerate}
\end{lemma}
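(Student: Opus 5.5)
The plan is to reduce Lemma~\ref{lem:edgetypes} to a statement about three random vectors and one Gaussian perturbation, and then argue with characteristic functions. Write $A=\xi_x^{(y)}$, $B=\xi_y^{(x)}$, and $W=\{\xi_\chi\}_{\chi\in U_{xy}}$, all computed from $\Phi^\lambda$. Let $A',B',W'$ be the same functionals computed from an independent Gaussian wave $\Psi^\lambda$. By the orthogonality built into the Green's-function basis, $(\wt A',\wt B',\wt W')$ has a covariance that is block diagonal between $W'$ and $(A',B')$. The variables $A',B'$ themselves may be correlated; by \eqref{eq:nextstar}, $B$ is a combination of star coordinates that are partly collinear with $A$-directions through $U_{xy}$.

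I would first change coordinates linearly so that the Gaussian perturbation is isotropic on $(A,B,W)$. Independence statements of the form ``conditioned on $A$'' survive only under transformations that fix $A$ up to scaling and mix $B$ and $W$ separately with $A$. So I would work directly with the covariance structure of the Gaussian and keep track of this constraint.

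The hypothesis at every $t\in[0,\infty]$ says that the density of $(A_t,B_t,W_t)=(A,B,W)+\sqrt t(A',B',W')$ factors as
\[
p_t(a,b,w)\,p_t(a)=p_t(a,b)\,p_t(a,w).
\]
Equivalently, $\log p_t(a,b,w)=g_t(a,b)+h_t(a,w)$. Passing to characteristic functions, conditional independence given $A$ can be encoded as follows. For each fixed $t$, the function
\[
\varphi_t(s,u,v)=\bE\big[e^{\ri(sA_t+uB_t+vW_t)}\big]
\]
satisfies $\varphi_t(\cdot,u,v)*\varphi_t(\cdot,0,0)=\varphi_t(\cdot,u,0)*\varphi_t(\cdot,0,v)$, where $*$ is convolution in $s$ (normalized). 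Moreover, $\varphi_t=\varphi_0\cdot e^{-tQ(s,u,v)/2}$ with $Q$ the Gaussian quadratic form, which splits as $Q=Q_1(s,u)+Q_2(v)$ plus possibly an $s$--$u$ cross term.

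The key step is to expand both sides in $t$ near $t=\infty$, or equivalently to rescale. For large $t$, the heated variables are nearly Gaussian, and conditional independence given $A_t$ forces the cross-covariance $\Cov(B_t,W_t\mid A_t)$ to vanish exactly for every $t$. Writing this as a rational function of $t$ built from the covariance matrices $\Sigma_0$ of $(A,B,W)$ and $\Sigma'$ of the Gaussian, I would get
\[
\big(\Sigma_{BW}+t\Sigma'_{BW}\big)\big(\Sigma_{AA}+t\Sigma'_{AA}\big)=\big(\Sigma_{BA}+t\Sigma'_{BA}\big)\big(\Sigma_{AW}+t\Sigma'_{AW}\big)
\]
for all $t$. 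Since $\Sigma'_{AW}=\Sigma'_{BW}=0$ and $\Sigma'_{AA}>0$, comparing coefficients of $t$ gives $\Sigma'_{AA}\Sigma_{BW}=\Sigma'_{BA}\Sigma_{AW}$ and $\Sigma_{BA}\Sigma_{AW}=\Sigma_{BW}\Sigma_{AA}$. A second-order identity of this kind splits into the alternatives $\Sigma_{AW}=0$ (the Type~1 direction, after further argument) or $\Sigma_{BA}$ aligned with $\Sigma'$ (the Type~2 direction).

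To upgrade from covariances to full independence, I would repeat the same polynomial-in-$t$ argument on the full characteristic function. Fix $(u,v)$ and view the factorization identity as an analytic identity in $t$. The mixed term in $s$ coming from $e^{-tQ_1(s,u)/2}$ forces, as $t$ varies, either that $\varphi_0(s,u,v)$ factors as $\varphi_0(s,0,v)\cdot(\cdot)$ with no $s$-dependence coupling to $u$ (Type~2: $W\perp(A,B)$), or that the $s$-dependence decouples entirely (Type~1: $A\perp(B,W)$).

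The main obstacle is this dichotomy step. Mixed cases, where the alternative chosen depends on $(u,v)$, must be excluded. I would try to do this by analyticity, using that the smoothness of the heated laws makes $\varphi_t$ analytic-in-$t$ families, together with a connectedness argument: each alternative defines a closed set of $(u,v)$, and their union is all of $\bR^{1+\dim W}$. This is exactly the content of \cite[Lemma A.6]{backhausz2019almost}, whose proof transfers verbatim once the Gaussian covariance has the block structure above. In the end I would expect to simply cite that argument after verifying this structure.
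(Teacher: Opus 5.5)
The paper gives no argument here beyond citing Lemma A.6 of Backhausz--Szegedy, so your closing sentence agrees with it. The sketch you give before that, however, does not prove the statement, and the dichotomy it steers toward cannot hold as printed.

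First, your worry about an $s$--$u$ cross term is unfounded. $G_x^{(y)}$ and $G_y^{(x)}$ are supported on opposite sides of the edge, so $A',B',W'$ are independent and the noise is isotropic in the normalised coordinates.

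That in turn shows ``$A$ independent of $(B,W)$'' cannot be one of the conclusions. Let $B$ be independent of $(A,W)$, with $A$ symmetric and $W$ the standardisation of $A^2$. Then all covariances are the identity, as for any eigenvector process, but $A$ and $W$ are dependent. Since $A',B',W'$ are independent, $B_t$ is independent of $(A_t,W_t)$ for every $t$. So the hypothesis holds, while neither printed alternative does.

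What the cited lemma actually delivers is: either $\xi_y^{(x)}$ is independent of $(\xi_x^{(y)},W)$, or $W$ is independent of $(\xi_x^{(y)},\xi_y^{(x)})$. This is also how the paper uses Type 1 later: a Type 1 edge $(o,i)$ makes $\xi_i^{(o)}$ independent of the rest of $C_o$.

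Your own covariance identity points the same way. With $\Sigma'_{BA}=0$ it gives $\Sigma_{BW}=0$ and $\Sigma_{BA}\Sigma_{AW}=0$, so labelling $\Sigma_{AW}=0$ ``the Type 1 direction'' is a misreading. That step is also unjustified and, here, vacuous:
\begin{itemize}
\item Conditional independence only gives $\Cov(B_t,W_t)=\Cov(\bE[B_t\mid A_t],\bE[W_t\mid A_t])$. This matches your partial-covariance formula only if the regressions on $A_t$ are linear.
\item Every eigenvector process has covariance $\Im G$, so $(\wt A,\wt B,\wt W)$ is uncorrelated from the start.
\end{itemize}

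The mechanism you propose for ruling out mixed cases also fails. A connected space can be a union of two proper closed sets, and $\varphi_0$ need not be analytic in $(u,v)$.

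Your Fourier set-up does, however, prove the corrected lemma directly. By the isotropy above, both sides of your convolution identity carry the same weight $e^{-t[(s-r)^2+r^2+u^2+|v|^2]/2}$. Cancel it, write $(s-r)^2+r^2=2(r-s/2)^2+s^2/2$, and invert a Laplace transform in $t\in(0,\infty)$; only the part even in $r-s/2$ survives. This gives, for all real $p,q,u$ and all $v$,
\[
\varphi_0(p,u,v)\,\varphi_0(q,0,0)+\varphi_0(q,u,v)\,\varphi_0(p,0,0)=\varphi_0(p,u,0)\,\varphi_0(q,0,v)+\varphi_0(q,u,0)\,\varphi_0(p,0,v).
\]
Setting $q=0$ expresses $\varphi_0$ through its two-dimensional marginals. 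Substituting back yields
\[
\tilde F(p,u)\tilde H(q,v)+\tilde F(q,u)\tilde H(p,v)=-2\tilde K(u,v)\varphi_0(p,0,0)\varphi_0(q,0,0),
\]
where $\tilde F(p,u)=\varphi_0(p,u,0)-\varphi_0(p,0,0)\varphi_0(0,u,0)$, and $\tilde H$, $\tilde K$ are the analogous defects for $(A,W)$ and $(B,W)$.
\begin{itemize}
\item Putting $p=q=0$ gives $\tilde K\equiv 0$.
\item Putting $q=p$ gives $\tilde F(p,u)\tilde H(p,v)=0$.
\item So a single nonzero value $\tilde F(p_0,u_0)$ forces $\tilde H(p_0,\cdot)\equiv 0$, and then, by the bilinear identity, $\tilde H\equiv 0$.
\end{itemize}
Hence, by the $q=0$ formula, one of two things holds globally, with no connectedness argument needed. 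Either $\tilde F\equiv0$ and $\varphi_0(p,u,v)=\varphi_0(0,u,0)\varphi_0(p,0,v)$, so $B$ is independent of $(A,W)$. Or $\tilde H\equiv 0$ and $\varphi_0(p,u,v)=\varphi_0(p,u,0)\varphi_0(0,0,v)$, so $W$ is independent of $(A,B)$.
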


Thus, for every typical process, one of the two alternatives in \Cref{lem:edgetypes} must occur for every edge.

\subsubsection*{Partial independence implies full Gaussianity}

We now want to show that this amount of independence forces the underlying distribution to be Gaussian, for which we use the following result.

\begin{lemma}[Darmois--Skitovich theorem, see Theorem~5.3.1 in \cite{bryc1995normal}]\label{lem:darmois}
Let $\xi_1,\xi_2,\ldots,\xi_d$ be mutually independent random variables. If there exist nonzero coefficients $a_1,b_1,\ldots,a_d,b_d$ such that
$\sum_{i=1}^d a_i\xi_i$ is independent of $\sum_{i=1}^d b_i\xi_i$, then $(\xi_1,\xi_2,\ldots,\xi_d)$ is a Gaussian vector.
\end{lemma}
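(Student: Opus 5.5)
We use the classical characteristic-function argument (Skitovich, Darmois; see also Linnik's finite-difference method). Let $\varphi_i(s)\deq\bE[\re^{\ri s\xi_i}]$. Mutual independence of the $\xi_i$ together with independence of $L_1=\sum_i a_i\xi_i$ and $L_2=\sum_i b_i\xi_i$ gives, for all $s,t\in\bR$,
\[
\prod_{i=1}^d \varphi_i(a_i s+b_i t)=\prod_{i=1}^d\varphi_i(a_i s)\,\prod_{i=1}^d\varphi_i(b_i t).
\]

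\textbf{Step 1: grouping by ratio.} Call $i,j$ equivalent if $a_i/b_i=a_j/b_j$. Within a class with common ratio $r$, the combination $\eta=\sum_{i\in\text{class}} b_i\xi_i$ satisfies $\sum_{i\in\text{class}}a_i\xi_i=r\eta$. It therefore suffices to prove the theorem for the independent variables $\eta$, which have pairwise distinct ratios. Once each $\eta$ is known to be Gaussian, Cramér's decomposition theorem says each independent summand $b_i\xi_i$ is Gaussian, so the $\xi_i$ are. So assume from now on that all ratios $a_i/b_i$ are distinct.

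\textbf{Step 2: local polynomiality.} Choose $\delta>0$ with $|\varphi_i|\ge 1/2$ on $[-\delta,\delta]$ for all $i$. On a neighborhood of the origin, take continuous logarithms $\psi_i=\log\varphi_i$ with $\psi_i(0)=0$. The equation becomes
\[
\sum_i\psi_i(a_is+b_it)=A(s)+B(t).
\]
We then apply finite differences. For a fixed $j$, take increments $h_k$ along the directions $(b_k,-a_k)$ for $k\neq j$. The difference along $(b_k,-a_k)$ kills the term $\psi_k$, because $a_k s+b_k t$ is constant along that line. One further difference in each of $s$ and $t$ kills $A$ and $B$. The result is that some iterated finite difference of order $d+1$ of $\psi_j$ vanishes on a small interval. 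Since $\psi_j$ is continuous, the vanishing of all $(d+1)$-th differences with small steps forces $\psi_j$ to coincide, near $0$, with a polynomial $P_j$ of degree at most $d$. This uses the standard fact that a continuous solution of the Fréchet equation $\Delta_h^{n}f=0$ is a polynomial. Distinctness of the ratios is exactly what allows each $\psi_k$ with $k\neq j$ to be annihilated while $\psi_j$ survives.

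\textbf{Step 3: from local to global, and degree $\le 2$.} This is the main obstacle. Since $\varphi_j=\re^{P_j}$ only near $0$, Marcinkiewicz's theorem does not apply directly. First I would try the standard route: local agreement of $\log\varphi_j$ with a polynomial implies that $\xi_j$ has moments of all orders, obtained by differentiating $\varphi_j$ at $0$ repeatedly via the even-derivative criterion. The cumulants of order $>d$ then vanish. Next, a moment-growth bound shows that $\varphi_j$ is entire and agrees with $\re^{P_j}$ on a real interval, hence everywhere by analytic continuation; Marcinkiewicz's theorem then gives $\deg P_j\le 2$, so $\xi_j$ is Gaussian. If the moment-growth bound is awkward, the fallback is to extend the functional equation step by step along $\bR$: at any first zero of some $\varphi_j$, the factorized identity evaluated at suitable $(s,t)$ inside the region already controlled produces a contradiction. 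This shows all $\varphi_i$ are zero-free, so the logarithms and the differencing argument extend globally and $\psi_j=P_j$ on all of $\bR$.

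Finally, the degenerate case of a constant $\xi_i$ counts as a Gaussian of variance zero. With this convention the conclusion is that $(\xi_1,\ldots,\xi_d)$, having independent Gaussian coordinates, is a Gaussian vector.
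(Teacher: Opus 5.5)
The paper does not prove this lemma; it quotes it from Bryc (Theorem~5.3.1), and your argument is the classical proof of that theorem and is correct: characteristic functions, grouping equal ratios $a_i/b_i$ and applying Cram\'er, Linnik-type finite differences to make $\log\varphi_j$ locally polynomial, then Marcinkiewicz. The local-to-global step you flag is routine: Cauchy estimates for the entire function $e^{P_j}$ give $\bE\xi_j^{2n}\le (2n)!\,M_R R^{-2n}$ for every $R>0$ (with $M_R=\max_{|z|=R}|e^{P_j(z)}|$), so all exponential moments are finite, $\varphi_j$ extends to an entire function equal to $e^{P_j}$ everywhere, and your fallback argument is not needed.
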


To apply this, we consider the two options in \Cref{lem:edgetypes}, Type 1 and Type 2. If $(o,i)$ is Type 1, we will usually consider $\xi_{i}^{(o)}$. We now introduce the random variable we usually consider for $(o,i)$ Type 2. In this case, we can do a further decomposition of the Green's function according to \eqref{eq:gffactor} and \eqref{eq:nextstar} to write 
\be\label{eq:gedecomp}
\xi_o= -G_{oo} \xi_{i}^{(o)}+\frac{G_{oo}}{G_{oo}^{(i)}}\xi_{o}^{(i)}.
\ee
Therefore, we take $\chi_i$ to be a unit vector in $\textnormal{span}\{G_o^{(i)},G_i^{(o)}\}$ that is orthogonal to $G_o$. $\chi_i$ is unique up to a unit scalar, and we claim that for every $i,j\sim o$, $\langle\chi_i,\chi_j\rangle\neq 0$. To see this, we can write $G_{i}^{(o)}$ as $c_{1,i}G_o+c_{2,i}\chi_i$, where $c_{1,i}$ and $c_{2,i}$ are nonzero. Then
\[
0=\langle G_{i}^{(o)},G_{j}^{(o)}\rangle=\langle c_{1,i}G_o+c_{2,i}\chi_i,c_{1,j}G_o+c_{2,j}\chi_j\rangle=c_{1,i}c_{1,j}\Im(G_{oo})+c_{2,i}c_{2,j}\langle \chi_i,\chi_j\rangle.
\]
Since $c_{1,i}c_{1,j}\Im(G_{oo})>0$, the second term must be nonzero to cancel it, and hence $\langle\chi_i,\chi_j\rangle\neq0$.

Given this, assume that $o$ has degree at least 3. We then split our analysis into different scenarios.
\begin{itemize}
    \item There are at least two directed edges, say $(o,i)$ and $(o,j)$ of Type 2. Then $\chi_{ij}\deq \chi_i-\langle\chi_i,\chi_j\rangle \chi_{j}\in U_{oj}$, meaning that $\xi_{\chi_j}$ and $\xi_{\chi_{ij}}$ are mutually independent. Similarly, so are $\xi_{\chi_i}$ and $\xi_{\chi_{ji}}$. Therefore, by \Cref{lem:darmois}, as $d_o\geq 3$, $\chi_{ij}$ and $\chi_{ji}$ are nontrivial, the random variables $\xi_\chi$ are Gaussian for $\chi$ in the span of $\chi_w$ for $(o,w)$ Type 2. Moreover, as $\xi_{\chi_{ij}}$ and $\xi_{\chi_{ji}} $ are independent of $\xi_o$, $\xi_o$ is independent of $\xi_\chi$ for $\chi$ in this space.
    \item 
    There is an edge $(o,i)$ of Type 1 and an edge $(o,j)$ of Type 2. We know $\xi_{i}^{(o)}$ is independent of $ \xi_{o}^{(j)} -\langle G_{i}^{(o)},G_{o}^{(j)}\rangle\xi_i^{(o)}$.  
Similarly, $\xi_{o}^{(j)}$ is independent of $\xi_{i}^{(o)}-\langle G_{i}^{(o)},G_{o}^{(j)}\rangle\xi_o^{(j)}$ as $(o,j)$ is of Type 2. Therefore, by \Cref{lem:darmois}, the space spanned by $\xi_i^{(o)}$ of Type 1 and $\xi_o^{(j)}$ of Type 2 is Gaussian. 
\end{itemize}

Combining these scenarios, if $d_o\geq 3$, and there is at least 1 edge of Type 1 and 2 edges of Type 2, the space spanned by $G_{i}^{(o)}$ for $(o,i)$ Type 1 and $\chi_j$ for $(o,j)$ Type 2 is Gaussian. We claim that this is in fact the entire space. To see this, we will create a basis by sequentially adding vectors. We start with $\{\chi_j\}_j$ for $(o,j)$ Type 2. This is full rank, as adding the vector $G_o$ would span a linear space which is also spanned by $\{G_j^{(o)}\}_j$ and $G_o$, which is full rank as long as there is at least one edge $(o,i)$ of Type 1. Next, the space spanned by $\{\chi_{j}\}_j$ cannot contain any $G_i^{(o)}$, as it only contains vectors orthogonal to $G_o$. Therefore, when we sequentially add vectors of the form $G_{i}^{(o)}$ for $(o,i)$ Type 1 to the basis, each of these is linearly independent from all vectors in our partial basis, as $G_{i}^{(o)}$ and $G_{\ell}^{(o)}$ are orthogonal when $i\neq \ell$.

If instead there is exactly one edge $G_{i}^{(o)}$ of Type 2, since every other edge is Type 1, the independence relations force the remaining Type 2 edge to satisfy the Type 1 condition as well.

We are then left to show Gaussianity under very specific conditions on the vertex $o$, which we call Type A and Type B:
\begin{enumerate}[(A)]
    \item All directed edges $(o,i)$ in $C_o$ are of Type 1, in which case we need to show all variables $\xi_{i}^{(o)}$ are Gaussian (we call this Type A); or
    \item All edges are  Type 2, in which case it is sufficient to show that $\xi_o$ is Gaussian (Type B). 
\end{enumerate}

We start by examining $\xi_{x}^{(o)}$ for a star $C_o$ of Type A. Then we use \eqref{eq:nextstar} to decompose it into variables associated with the star centered at $x$. If the degree of $x$ is at least 3, this is always a nontrivial (meaning multiple nonzero coefficients) decomposition into independent components, either into $\xi_{y}^{(x)}$ (for $C_x$ Type A) or into $\xi_{x}$ and a Gaussian (for $C_x$ Type B). If $x$ is degree 2, by \eqref{eq:nextstar} we can write $\xi_{x}^{(o)}$ as a scalar times $\xi_{y}^{(x)}$, for $y$ the unique neighbor of $x$ not equal to $o$. We can continue to perform this transition until we reach another vertex of degree at least 3, yielding the same decomposition. 

If $C_o$ is Type B, then we can expand \eqref{eq:gedecomp}  with reference to a specific neighbor $x$
to write
\begin{align}
    \begin{split}
\label{eq:nextstar2}
\xi_{o}&= -G_{oo} \xi_{{x}}^{(o)}+\left(\frac{G_{oo}}{G_{oo}^{({x})}}\right)\xi_{o}^{({x})}\\
&=G_{oo}G_{xx}^{(o)}\sum_{w\neq {o}, w\sim {x}}\xi_{w}^{(x)}+\left(\frac{G_{oo}}{G_{oo}^{(x)}}\right)\xi_{o}^{(x)}.
\end{split}
\end{align}
Assume the degree of $x$ is at least 3. If $C_x$ is Type A, then \eqref{eq:nextstar2} immediately gives a nontrivial decomposition. 
If $C_x$ is Type B,
we compare \eqref{eq:nextstar2} to
\[
\xi_x=-G_{xx}\sum_{w\neq o, w\sim x}\xi_{w}^{(x)}-G_{xx}\xi_{o}^{(x)}.
\]
Therefore, we can write $\xi_o$ in terms of $\xi_x$ and an independent Gaussian. 
Similarly, if the degree of $x$ is 2, we can do the above decomposition again along the path $o,x,\ldots,x_r=x$, to the next vertex of degree at least 3, as 
\be\label{eq:fardecomp}
\xi_o=-G_{oo}\left(\prod_{i=1}^{r}\left(-G_{x_i}^{(x_{i-1})}\right)\right)\sum_{w\neq {o}, w\sim {x}}\xi_{w}^{(x)}-\left(\frac{G_{oo}}{\prod_{i=1}^{r}\left(-G_{x_{i-1}}^{(x_i)}\right)}\right)\xi_{o}^{(x)}.
\ee
In this case, we have this is a nontrivial decomposition unless 
\begin{align*}
\prod_{i=1}^{r} G_{x_i}^{(x_{i-1})}=\left(\prod_{i=1}^{r}G_{x_{i-1}}^{(x_i)}\right)^{-1},
\end{align*}
which would force the two coefficients in \eqref{eq:fardecomp} to cancel and give a trivial decomposition. However, taking our path to go from one copy of $o$ to another (which must exist as $\bd$ is irreducible), if all vertices on this path of degree at least 3 are Type B, then by 
\Cref{lem:smallcoefficients}, we must have \[\left|\prod_{i=1}^{r} G_{x_i}^{(x_{i-1})}\right|<1<\left|\left(\prod_{i=1}^{r}G_{x_{i-1}}^{(x_i)}\right)^{-1}\right|,\] and there is some decomposition on this path into nontrivial parts.

Therefore, for vertices of degree at least 3, we take $\wt{\xi}_{i}^{(o)}$ for vertices of Type A and $\wt{\xi}_{o}$ on vertices of Type B. As $\cT_\bd$ has finite cone type, these form a finite system of linear equations in distribution of variance 1 random variables. Writing $\zeta_i$ as one of these variables, we can write this as
\[
\zeta_i\overset{d}{=}\sum_{j} \alpha_{i,j} \zeta_j+\alpha_{i} \zeta
\]
where each $|\alpha_{i,j}|<1$, $|\alpha_i|<1$ and $\zeta$ is an independent Gaussian. We claim that every variable here must be Gaussian. To see this, take $\zeta_i$ to be the variable with the smallest differential entropy. Then, by the Entropy Power Inequality (see \cite[Theorem 17.8.1]{cover1999elements}), writing it as the right hand side necessarily has higher differential entropy unless $\zeta_i$ is Gaussian. 

If the degree of $o$ is 2, then we can use \eqref{eq:nextstar} to reduce $\xi_{i}^{(o)}$ to a scalar times the nearest vertex of degree of at least $3$, meaning these variables are also Gaussian. This finishes the proof of \Cref{prop:smoothmaximizer}.

\appendix
\section{Moving from the finite to infinite graph}\label{sec:gen}
\subsection{Convergence of mean and covariance}\label{sec:covariance}
We show that the mean and covariance of the relevant random variables defined in \Cref{thm:GWconverge} and \Cref{thm:GWconverge2} converge to the desired limits. For the general configuration model, we show that $f_\lambda,\epsilon^\cG$ has the correct statistics. An identical proof works for $\wt{\psi}_{\lambda,\epsilon}^{\cG}$ as well.
\begin{lemma}\label{lem:zeromean}
    Let $\bd$ be an expanding degree matrix and let $\cG\sim \cG(N,\bd)$. 
    Recall the distribution $f_{\lambda,\epsilon}^{\cG}$ from \Cref{dfn:gaussianwindow}. 
    Then for any fixed $\epsilon>0$, and any part $V_i$ of the configuration model, if $o$ is sampled uniformly at random from $V_i$, we have $
        \bE_o\!\left[f_{\lambda,\epsilon}^{\cG}(o)\right]=O\!\left(1/N\right).$
\end{lemma}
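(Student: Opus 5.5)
The plan is to exploit the fact that the type indicators span a small invariant subspace of $A_\cG$. For $j\in[k]$ let $\mathbf 1_{V_j}$ be the indicator of the part $V_j$, and let $W\deq\textnormal{span}\{\mathbf 1_{V_1},\ldots,\mathbf 1_{V_k}\}$. Every vertex of $V_i$ has exactly $\bd_{ij}$ neighbours in $V_j$, so for every $x\in V_i$ we have $(A_\cG\mathbf 1_{V_j})(x)=\bd_{ij}$. Hence
\[
A_\cG\mathbf 1_{V_j}=\sum_{i}\bd_{ij}\mathbf 1_{V_i}.
\]
So $W$ is an $A_\cG$-invariant subspace of dimension at most $k$. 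The restriction of $A_\cG$ to $W$ is represented by the fixed matrix $\bd^{T}$.

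Since $A_\cG$ is symmetric, $W^\perp$ is invariant as well. We may therefore choose the orthonormal eigenbasis $\{\psi_\ell\}$ adapted to the splitting $W\oplus W^\perp$. This choice is harmless: within an eigenspace the basis is arbitrary, and the statistics only see the eigenspaces of the window. With this choice, at most $k$ eigenvectors lie in $W$ and all others are orthogonal to every $\mathbf 1_{V_i}$.

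Next we rewrite the quantity to be bounded. For a fixed eigenvector $\psi_\ell$ and $o$ uniform in $V_i$, using $|V_i|=q_iN$,
\[
\bE_o[\sqrt N\psi_\ell(o)]=\frac{\sqrt N}{q_iN}\langle\psi_\ell,\mathbf 1_{V_i}\rangle .
\]
Let $n_\epsilon\deq|\{\lambda_\ell\in[\lambda-\epsilon,\lambda+\epsilon]\}|$. Averaging over the uniformly chosen eigenvalue in the window gives
\[
\bE_o\!\left[f_{\lambda,\epsilon}^{\cG}(o)\right]=\frac{1}{n_\epsilon}\frac{1}{q_i\sqrt N}\sum_{\lambda_\ell\in[\lambda-\epsilon,\lambda+\epsilon]}\langle\psi_\ell,\mathbf 1_{V_i}\rangle .
\]
By the choice of basis, at most $k$ terms in this sum are nonzero. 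Each is bounded by $\|\mathbf 1_{V_i}\|=\sqrt{q_iN}$ via Cauchy--Schwarz. So the sum is at most $k\sqrt{q_iN}$, and the whole expression is $O\big(k/(n_\epsilon\sqrt{q_i})\big)$. If the window avoids the eigenvalues of $\bd$, the sum is in fact exactly zero.

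It remains to show $n_\epsilon\ge cN$ with high probability for fixed $\epsilon$, where $c>0$ depends on $\epsilon$, $\lambda$ and $\bd$. This is the step I expect to need the most care. For $\cG\sim\cG(N,\bd)$, the graph converges locally to $\cT_\bd$, since the number of short cycles is $O(1)$ with high probability. Hence the empirical spectral measure converges weakly to the expected spectral measure of $\cT_\bd$ at a uniformly chosen root.

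Because $\lambda$ lies in $\spec(A_{\cT_\bd})$ away from $\fD_\bd$, the limiting measure has a density $\frac1\pi\Im[m(\cdot)]$ that is positive on a neighbourhood of $\lambda$. Therefore it assigns positive mass $c(\epsilon)$ to $(\lambda-\epsilon,\lambda+\epsilon)$. Applying the Portmanteau theorem to this open interval gives $n_\epsilon\ge \tfrac12 c(\epsilon)N$ for large $N$, with probability $1-o_N(1)$.

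Combining the two steps yields $\bE_o[f^\cG_{\lambda,\epsilon}(o)]=O(1/N)$. The same computation applies verbatim to $\wt\psi^\cG_{\lambda,\epsilon}$. There the coefficients are $g_\ell/\sqrt{n_\epsilon}$, again only the at most $k$ eigenvectors in $W$ contribute, and averaging over the Gaussians makes the mean vanish identically.
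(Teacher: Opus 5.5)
Your proposal is correct and follows the paper's own argument. The block-constant vectors span a $k$-dimensional $A_\cG$-invariant subspace, so at most $k$ of the $\Theta(N)$ window eigenvectors can contribute to the mean; you make the Cauchy--Schwarz bound and the linear eigenvalue count explicit, which the paper simply asserts from absolute continuity.

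One caveat on your remark that the basis choice is harmless. The law of $f^\cG_{\lambda,\epsilon}$ is not basis-free inside a degenerate eigenspace $E$: with an arbitrary basis your bound only becomes $O(\sqrt{\dim E}/N)$. So choosing the basis adapted to $W\oplus W^\perp$ is an assumption, not a triviality. It is, however, exactly the implicit choice the paper makes when it asserts that all remaining eigenvectors are orthogonal to $F$, and for $\wt\psi^\cG_{\lambda,\epsilon}$ the choice is genuinely irrelevant.
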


\begin{proof}
The matrix $\bd$ is the quotient adjacency matrix corresponding to the equitable partition $
    V(\cG)=V_1\sqcup\cdots\sqcup V_k$, 
where $k$ is fixed independently of $N$.  
For the set of block-constant vectors $
    F\deq\bigl\{f:V(\cG)\to\bR \,:\, f \text{ is constant on each } V_i\bigr\}$,
 $\dim F = k$, and $F$ is invariant under the adjacency operator $A_\cG$.    
If $\bd$ is $k\times k$, then $\cG$ has precisely $k$ eigenvectors lying in $F$, obtained by lifting eigenvectors of $\bd$ to block-constant functions on $V(\cG)$.  
All remaining eigenvectors are orthogonal to $F$.

Since $\lambda$ lies in the absolutely continuous spectrum, the window defining $f_{\lambda,\epsilon}^{\cG}$ contains $\Theta(N)$ eigenvalues.  
Thus $f_{\lambda,\epsilon}^{\cG}$ is obtained by averaging over linearly many eigenvectors.  
As only $O(1)$ of these can lie in $F$, the contribution of the block-constant subspace to this average is $O(1/N)$.   Taking the expectation over $o$ sampled uniformly from $V_i$ therefore yields
 $   \bE_o\!\left[f_{\lambda,\epsilon}^{\cG}(o)\right]=O\!\left(1/N\right)$,
as claimed.
\end{proof}

We can now deal with the covariance. 
\begin{lemma}\label{lem:weakconverge}
    We fix a root $o$ for $\cT_\bd$, and let $i$ be any vertex at finite distance from $o$. Then, for $\cG\sim\cG(N,\bd)$, we consider the distribution over the randomly selected root $o_N$, which is a vertex selected uniformly at random across all vertices which $o$ covers. We then set $i_N$ to be the corresponding vertex which $i$ covers.
    
For the function $f^\cG_{\lambda,\epsilon}$ from \Cref{dfn:gaussianwindow}, we have 
    \[
\lim_{N\rightarrow \infty}\frac{\bE_{o_N,i_N}[f^\cG_{\lambda,\epsilon}(o_N)f^\cG_{\lambda,\epsilon}(i_N)]}{\bE_{o_N}[f^\cG_{\lambda,\epsilon}(o_N)^2]}= \frac{\Im[G_{oi}]}{\Im[G_{oo}]},
    \]
    
    \end{lemma}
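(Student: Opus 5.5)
We express the averaged correlation through the spectral projection of $A_\cG$ onto the window and then use local convergence of the Green's function. Let $P_\epsilon$ be the orthogonal projection onto the span of eigenvectors of $A_\cG$ with eigenvalues in $[\lambda-\epsilon,\lambda+\epsilon]$, and let $n_\epsilon$ be the number of such eigenvalues. Averaging over the uniformly chosen eigenvector gives
\[
\bE\big[f^\cG_{\lambda,\epsilon}(x)f^\cG_{\lambda,\epsilon}(y)\big]=\frac{N}{n_\epsilon}(P_\epsilon)_{xy}.
\]
Averaging also over the random lift pair $(o_N,i_N)$, the ratio in the statement becomes
\[
\frac{\sum_{o_N}(P_\epsilon)_{o_Ni_N}}{\sum_{o_N}(P_\epsilon)_{o_No_N}}.
\]
The normalizing factor $N/n_\epsilon$ cancels, so we only need the limit of the local averages $\frac1{|V_{o}|}\sum_{o_N}(P_\epsilon)_{o_Ni_N}$. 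Here $V_o$ denotes the set of vertices of $\cG$ covered by $o$.

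\textbf{Step 1: local weak convergence.} With high probability $\cG$ is locally tree-like: all but $o(N)$ vertices have a tree neighborhood of radius $R_N\to\infty$ isomorphic to that of $\cT_\bd$. Hence for every polynomial $p$ we have $\frac1{|V_o|}\sum_{o_N}p(A_\cG)_{o_Ni_N}\to p(A_{\cT_\bd})_{oi}$, since entries of $p(A)$ depend only on a bounded neighborhood. The spectra are uniformly bounded, so the Weierstrass theorem upgrades this to all continuous $h$:
\[
\frac1{|V_o|}\sum_{o_N}h(A_\cG)_{o_Ni_N}\to h(A_{\cT_\bd})_{oi}=\int h\,\rd\mu_{oi},
\]
where $\mu_{oi}$ is the (signed) spectral measure of the pair $(o,i)$ on the tree.

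\textbf{Step 2: passing to the indicator.} Since $\lambda\notin\fD_\bd$, on a neighborhood of $\lambda$ the measure $\mu_{oi}$ is absolutely continuous with continuous density $\frac1\pi\Im G_{oi}(E)$, by Stieltjes inversion together with continuity of $G$ off $\fD_\bd$ (\Cref{lem:conetypegf}). This holds in particular for $\mu_{oo}$. Sandwich $\mathbf 1_{[\lambda-\epsilon,\lambda+\epsilon]}$ between continuous functions differing on sets of $\mu_{oo}$-mass $\delta$. For the off-diagonal term, apply Cauchy–Schwarz to the positive semidefinite form $(u,v)\mapsto h(A)_{uv}$ with $h\ge0$; this controls the error by the diagonal masses. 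Together these give
\[
\frac1{|V_o|}\sum_{o_N}(P_\epsilon)_{o_Ni_N}\to\frac1\pi\int_{\lambda-\epsilon}^{\lambda+\epsilon}\Im G_{oi}(E)\,\rd E,
\]
and the analogous statement for the diagonal.

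\textbf{Step 3: the ratio and the limit in $\epsilon$.} Taking the ratio yields
\[
\frac{\int_{\lambda-\epsilon}^{\lambda+\epsilon}\Im G_{oi}}{\int_{\lambda-\epsilon}^{\lambda+\epsilon}\Im G_{oo}}.
\]
By continuity this equals $\frac{\Im G_{oi}(\lambda)}{\Im G_{oo}(\lambda)}+o_\epsilon(1)$. The denominator is bounded below because $\Im G_{oo}>0$ for $\lambda\notin\fD_\bd$ in the spectrum.

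Read literally, the statement fixes $\epsilon$. If the intended statement has $\epsilon$ tending to $0$ slowly with $N$, as in \Cref{thm:GWconverge}, the argument needs a quantitative form: local convergence of $G(z,\cG)_{o_Ni_N}$ at $\Im z=\eta_N\gg\epsilon_N$ rather than the Weierstrass step. This is the main obstacle. I would try a resolvent comparison at mesoscopic $\eta$ using the tree-like radius $R_N\sim c\log N$. Such a comparison gives errors of order $e^{-cR_N\eta}$, which is acceptable when $\epsilon_N$ decreases slowly enough, and it justifies letting $\epsilon\to0$ after $N\to\infty$ along a diagonal sequence.
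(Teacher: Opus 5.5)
Your argument is correct and is essentially the paper's: local convergence gives convergence of the moments $\langle\delta_{i_N},A_\cG^k\delta_{o_N}\rangle$, hence of the pair spectral measures (your $(P_\epsilon)_{o_Ni_N}$ is, up to normalization, the paper's $\mu^N_{oi}([\lambda-\epsilon,\lambda+\epsilon])$), then of the window masses, and finally Stieltjes inversion as $\epsilon\to0$; the paper also ends with $\lim_{\epsilon\to0}\lim_{N\to\infty}$, matching your reading of the statement as an iterated limit. Your Cauchy--Schwarz control of the signed off-diagonal measure fills a step the paper passes over, since weak convergence alone does not give convergence of interval masses for signed measures; and for the slowly decaying $\epsilon_N$ reading you do not need a quantitative resolvent comparison, because for fixed $\lambda$ a soft diagonal choice of $\epsilon_N$ already suffices.
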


\begin{proof}
    
    For the configuration model, we use the well known fact, (see, e.g. \cite[Theorem 4.1]{van2024random}), that a configuration model of fixed sparsity converges locally to its local weak limit. Therefore, for every fixed integer $k$, \be\label{eq:treeconverge}\lim_{N\rightarrow \infty }\bE_o[\langle \delta_{i_N},A_{\cG}^k\delta_{o_N}\rangle]=\bE_o[\langle \delta_{i},A_{\cT_\bd}^k\delta_{o}\rangle].
    \ee Therefore, define
    \[\mu_{oi}^N\deq\frac1{N}\sum_{\lambda\in \spec(A_{\cG})}\bE_{o}[\psi^N_{\lambda}(o_N)\psi^N_{\lambda}(i_N)]\delta_\lambda,\]
    and $\mu_{oi}$ defined by its moments
    \[
    \int_{\bR}x^k \rd \mu_{oi}=\langle \delta_{i},A_{\cT_\bd}^k\delta_{o}\rangle.
    \]
    Since the spectrum is uniformly bounded and moments determine the measure, convergence of all moments implies weak convergence of $\mu_{oi}^N$ to $\mu_{oi}$.

By the fact that $\lambda$ is in the absolutely continuous part of the spectrum, by the Stieltjes inversion formula (see \cite[Equation 2.11]{aizenman2015random}), for sufficiently small $\epsilon$,
\[
\int_{\lambda-\epsilon}^{\lambda+\epsilon} \rd\mu_{oo}(x)\geq \frac12\Im[G_{oo}]\epsilon.
\]
Moreover, by weak convergence, for any $\alpha>0$ there is some sufficiently large $N$ such that
\be\label{eq:treeconvergence2}
\left|\int_{\lambda-\epsilon}^{\lambda+\epsilon} \rd\mu_{oi}^N(x)-\rd\mu_{oi}(x)\right|,\left|\int_{\lambda-\epsilon}^{\lambda+\epsilon} \rd\mu_{oo}^N(x)-\rd\mu_{oo}(x)\right|< \alpha.
\ee
Therefore, as $\bE_o[f^\cG_{\lambda,\epsilon}(o_N)f^\cG_{\lambda,\epsilon}(i_N)]=\int_{\lambda-\epsilon}^{\lambda+\epsilon}\rd\mu_{oi}^N(x)$,
by \eqref{eq:treeconvergence2}, for $\alpha\leq \frac14\Im[G_{oo}]\epsilon$, we use that $\frac{x+\alpha}{y-\alpha}-\frac{x}{y}=\frac{\alpha(x+y)}{y(y-\alpha)}$, giving
\be\label{eq:ratioapprox}
\left|\frac{\bE_o[f^\cG_{\lambda,\epsilon}(o_N)f^\cG_{\lambda,\epsilon}(i_N)]}{\bE_o[f^\cG_{\lambda,\epsilon}(o_N)^2]}-\frac{\int_{\lambda-\epsilon}^{\lambda +\epsilon} \rd\mu_{oi}(x)}{\int_{\lambda-\epsilon}^{\lambda +\epsilon} \rd\mu_{oo}(x)}\right|\leq 16\Im[G_{oo}]^{-1}\epsilon^{-1} \alpha.
\ee
Therefore, as $\alpha$ is arbitrary, fixing $\epsilon$ and sending $\alpha$ to 0, we see this converges to 0. 
Thus, \eqref{eq:absolutelycontinuous} and \eqref{eq:ratioapprox} give 
\[
\lim_{\epsilon\rightarrow 0}\lim_{N\rightarrow \infty}\frac{\int_{\lambda-\epsilon}^{\lambda +\epsilon} \rd\mu_{oi}^N(x)}{\int_{\lambda-\epsilon}^{\lambda +\epsilon} \rd\mu_{oo}^N(x)}=\frac{\Im[G_{oi}]}{\Im[G_{oo}]}.
\]

\end{proof}

 We now give a similar decomposition for the bipartite biregular graph. 
 \begin{lemma}\label{lem:zeromeanreg}
     For $d_1\geq d_2$, $d_1\geq 3$, and $d_2\geq 2$, the following is true about the model $\cG\sim \cG(N,d_1,d_2)$.
    \begin{enumerate}[(1)]
    \item 
    The spectrum of the bipartite biregular infinite tree $\cT_{d_1,d_2}$ for $d_1>d_2$ is purely absolutely continuous away from $\fD_{d_1,d_2}=0$. If $d_1=d_2$, it is purely absolutely continuous with $\fD_{d_1,d_2}=\emptyset$.
        \item All eigenvectors of eigenvalue $|\lambda|<\sqrt{d_1d_2}$ are orthogonal to the set of vectors that are constant on each of the two parts $V_1\sqcup V_2$ of the configuration model. 
    \end{enumerate}
 \end{lemma}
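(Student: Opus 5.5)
For part (1), I will compute the Green's function of $\cT_{d_1,d_2}$ explicitly through the cone recursion \eqref{eq:finitecones}. There are only two cone types: a cone rooted at a vertex of $V_1$, which has $d_1-1$ children, and a cone rooted at a vertex of $V_2$, which has $d_2-1$ children. Write $g_1=G^{(y)}_{xx}$ for $x\in V_1$ and $g_2$ for the analogous quantity with $x\in V_2$. The recursion becomes
\[
g_1=(-z-(d_1-1)g_2)^{-1},\qquad g_2=(-z-(d_2-1)g_1)^{-1}.
\]
Eliminating $g_2$ gives a quadratic in $g_1$. Its discriminant, as a function of $\lambda^2$, vanishes exactly at $\lambda^2=(\sqrt{d_1-1}\pm\sqrt{d_2-1})^2$.

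Choosing the root with $\Im g_i>0$ for $z\in\Cplus$, and letting $\eta\to 0^+$, I get the following picture.
\begin{itemize}
\item On the open band $\sqrt{d_1-1}-\sqrt{d_2-1}<|\lambda|<\sqrt{d_1-1}+\sqrt{d_2-1}$, both $g_1$ and $g_2$ are finite with strictly positive imaginary part.
\item The full diagonal entries are $G_{oo}=(-z-d_1g_2)^{-1}$ and its $V_2$ analogue. These are finite and nonzero there, since $-z-d_1 g_2$ has imaginary part bounded away from $0$.
\item Outside the closed band the limits are real, so $\Im m(\lambda)=0$.
\end{itemize}
When $d_1>d_2$, the only remaining singularity is $\lambda=0$. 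There the $V_1$ diagonal has a pole, reflecting the kernel of dimension proportional to $|V_1|-|V_2|$. This gives $\fD_{d_1,d_2}=\{0\}$ together with the band edges, where $\Im g$ vanishes. When $d_1=d_2$, the band contains $0$ and no pole arises; this is the regular tree, and $\fD=\emptyset$ apart from the edges.

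Purely absolutely continuous spectrum then follows from \eqref{eq:absolutelycontinuous} applied pointwise on the band. To show there is no singular part, I would use the standard argument that $G_{oo}(\lambda+\ri\eta)$ has a continuous extension to the closed upper half-plane off finitely many points, so the spectral measure has a bounded density there.

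For part (2), I will use the equitable partition exactly as in \Cref{lem:zeromean}. The space $F$ of vectors constant on $V_1$ and on $V_2$ is two-dimensional and $A_\cG$-invariant. On $F$, $A_\cG$ acts by the quotient matrix $\begin{pmatrix}0&d_1\\d_2&0\end{pmatrix}$, whose eigenvalues are $\pm\sqrt{d_1d_2}$. Because $A_\cG$ is symmetric, $F^\perp$ is also invariant, and every eigenspace decomposes accordingly. Hence any eigenvector with eigenvalue $|\lambda|<\sqrt{d_1d_2}$ has zero component in $F$.

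One subtlety remains: for a repeated eigenvalue one must choose the eigenbasis compatibly. But the eigenvalues $\pm\sqrt{d_1d_2}$ are excluded by hypothesis, so the eigenspace for $\lambda$ already lies in $F^\perp$, and the claim holds for every eigenvector, not just for a chosen basis.

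The main obstacle is part (1). It requires a careful choice of branch of the square root and a verification that $\Im g_i>0$ on the whole open band. It also requires checking that $G_{oo}$ remains finite near $\lambda=0$ in the case $d_1=d_2$, and identifying correctly that the pole at $0$ occurs only for $d_1>d_2$. I would first solve the quadratic explicitly, then check the sign of $\Im$ at a single point of the band and extend it by continuity, using that the discriminant does not vanish inside the band.
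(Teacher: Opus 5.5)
Your proposal is correct. Part (2) is the same as the paper's argument: the block-constant space $F$ is $A_\cG$-invariant with quotient eigenvalues $\pm\sqrt{d_1d_2}$. Hence $F^\perp$ is invariant, and every eigenspace for $\lambda\neq\pm\sqrt{d_1d_2}$ lies in $F^\perp$.

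For part (1) you take a different route. The paper simply invokes the explicit spectral decomposition of the semiregular tree from Godsil--Mohar. You instead derive everything from the two-type cone recursion. The citation is shorter and gives the spectral measure of the whole tree. However, the definition of $\fD_\bd$ is phrased through the cone quantities $G^{(y)}_{xx}$, and your computation controls these directly.

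Your computation also exposes something the statement glosses over. At the band edges $\pm(\sqrt{d_1-1}\pm\sqrt{d_2-1})$ the discriminant vanishes, so $\Im g_i=0$ there. Under the literal definition these points therefore belong to $\fD_{d_1,d_2}$. The claims $\fD=\{0\}$ and $\fD=\emptyset$ should be read modulo these finitely many edge points, and your formulation is the accurate one.

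Three small corrections for the write-up:

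\begin{enumerate}
\item You put the pole on the wrong side. For $d_1>d_2$ one has $|V_1|<|V_2|$, and the zero modes live on $V_2$: the cone at a $V_2$ vertex carries an $\ell^2$ kernel vector of squared norm $\sum_k((d_2-1)/(d_1-1))^k$. As $z\to0$ one gets $g_1\sim z/(d_1-d_2)$ and $g_2\sim-(d_1-d_2)/((d_1-1)z)$. Hence $G_{oo}\sim-(1-d_2/d_1)/z$ for $o\in V_2$, while $G_{oo}\to0$ for $o\in V_1$. The conclusion $0\in\fD_{d_1,d_2}$ is unchanged.

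\item To justify that $0$ is the only singularity off the band edges, note that for $\lambda\neq0$ both $g_i$ are finite and nonzero. Moreover $-z-d_1g_2=g_1^{-1}-g_2$ and $-z-d_2g_1=g_2^{-1}-g_1$. A pole of a diagonal entry therefore forces $g_1g_2=1$, which in turn forces $z^2=d_1d_2$ and $g_1=-z/d_2$. This is the non-physical root: the physical one is the root of smaller modulus, continuing $g_1\sim-1/\lambda$ from $\lambda=\infty$.

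\item When $d_1=d_2$, the eliminated quadratic $z(d_2-1)g_1^2+(z^2-d_1+d_2)g_1+z=0$ has a common factor $z$. Divide it out before evaluating at $\lambda=0$; this gives $g=\ri/\sqrt{d-1}$.
\end{enumerate}
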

 \begin{proof}
 (1) follows immediately from the decomposition of the spectrum in \cite[Corollary 4.5]{godsil1988walk}. For (2) we can do a  similar argument to  \Cref{lem:zeromean}, as the $\pm\sqrt{d_1d_2}$ eigenspaces correspond to vectors which are constant on each part.
 \end{proof}
 \begin{lemma}\label{lem:weakconvergereg}
Fix $k\geq 0$. For the infinite biregular tree $\cT_{d_1,d_2}$, we fix a root $o$ for $\cT_{d_1,d_2}$, and let $i$ be any vertex at distance $k$ from $o$. Then, for the random bipartite biregular graph $\cG\sim\cG(N,d_1,d_2)$, we consider the distribution over the randomly selected vertex $o_N$ covered by $o$, and take $i_N$ to be the image of $i$ in the projection onto $\cG$. For any $\lambda\in \spec(\cT_{\bd})$ with any eigenvector $\psi$, \[
   \frac{\bE_{o_N,i_N}[\psi_{o_N}\psi_{i_N}]}{\bE_{o_N}[\psi_{o_N}^2]}= \frac{\Im[G_{oi}]}{\Im[G_{oo}]}.
    \]
\end{lemma}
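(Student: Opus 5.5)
The plan is to express both sides through the same polynomial in $\lambda$, using that nonbacktracking walks on the biregular tree satisfy a polynomial recursion in the adjacency operator. Fix the type of the root; say $o$ covers vertices of $V_1$, so $o_N$ is uniform on $V_1$, and the argument is symmetric for $V_2$. For each $k$, let $A^{(k)}_\cG$ be the nonbacktracking walk matrix: $(A^{(k)}_\cG)_{xy}$ is the number of nonbacktracking walks of length $k$ from $x$ to $y$. Define $A^{(k)}_{\cT}$ on $\cT_{d_1,d_2}$ the same way, where it is just the indicator of distance exactly $k$. By the definition of the universal cover, the pairs $(o_N,i_N)$ with $i$ ranging over the vertices at distance $k$ from $o$ are exactly the endpoints of nonbacktracking walks of length $k$ started at $o_N$. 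Hence no cycle or tree-likeness assumption on $\cG$ is needed. Radial symmetry of $\cT_{d_1,d_2}$ makes $\Im[G_{oi}]$ the same for every $i$ at distance $k$; let $n_k$ be the number of such $i$. It then suffices to prove
\[
\frac{\langle \psi, P_1 A^{(k)}_\cG \psi\rangle}{\langle \psi, P_1\psi\rangle}=\frac{\Im[(A^{(k)}_{\cT}G)_{oo}]}{\Im[G_{oo}]},
\]
where $P_1$ is the projection onto $V_1$. Dividing both sides by $n_k$ gives the statement, since the left side divided by $n_k$ is the empirical ratio.

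The key step is the recursion. For a walk ending at a vertex of type $1$ or type $2$, the number of nonbacktracking extensions is $d_1-1$ or $d_2-1$, respectively. This gives
\[
A A^{(k)}=A^{(k+1)}+(d_{\tau}-1)\,A^{(k-1)}
\]
for $k\geq 2$, with the usual modification at $k=1$. Here $d_\tau$ is read off from the type of the vertex at the end of the walk. Writing $A^{(k)}=\sum_\tau A^{(k)}P_\tau$ and using that the type alternates along walks, so that $P_\tau A=AP_{3-\tau}$, induction shows that $P_1A^{(k)}=P_1\,q_k(A)$ for an explicit polynomial $q_k$. The polynomial $q_k$ depends only on $(d_1,d_2,k)$ and has the parity of $k$. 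The identical derivation on $\cT_{d_1,d_2}$ gives $P_1A^{(k)}_{\cT}=P_1q_k(A_{\cT})$ with the same $q_k$, because the recursion is purely local. I expect the main obstacle to be the bookkeeping in this step: tracking the type-dependent coefficients and the base cases $k=0,1$ so that the graph and tree polynomials provably coincide.

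Both sides then evaluate to $q_k(\lambda)$. On the graph, $A\psi=\lambda\psi$ gives $\langle\psi,P_1q_k(A)\psi\rangle=q_k(\lambda)\langle\psi,P_1\psi\rangle$; this is nonzero for $\lambda\neq 0$, since then $\psi$ has mass on both parts. On the tree, $(q_k(A_\cT)G^{z})_{oo}=\int \frac{q_k(x)}{x-z}\,\rd\mu_{oo}(x)$, where $\mu_{oo}$ is the spectral measure at $o$. Since $q_k$ is real, Stieltjes inversion at $\lambda$, which lies in the absolutely continuous spectrum by \Cref{lem:zeromeanreg}(1), gives $\Im[(q_k(A_\cT)G)_{oo}]=q_k(\lambda)\Im[G_{oo}]$. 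Equating the two sides proves the identity, and hence the lemma. If $\psi$ is only an approximate eigenvector, the same computation yields an error of order $\|(A-\lambda)\psi\|$ times a constant depending on $k$, which is what is needed later.
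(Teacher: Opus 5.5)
Your proposal is correct and is essentially the paper's argument. The paper also reduces both sides to the same sphere-sum polynomial in $\lambda$: it iterates the eigenvector equation on spheres of the lifted eigenvector in $\cT_{d_1,d_2}$ (using radial symmetry), then transfers the identical recurrence to $\Im[G_{\cdot o}]$ because the Green's function column satisfies the eigenvector equation in the limit $\eta\to 0^+$. Your operator identity $P_1A^{(k)}=P_1q_k(A)$ for the nonbacktracking walk matrices, with functional calculus on the tree side, is a more explicit bookkeeping of that same computation.
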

\begin{proof}
We evaluate this covariance by lifting $\psi$ to $\distr^*(\psi)$. Because the infinite biregular tree is radially symmetric, it suffices to compute the covariance between $o\in V$ and any vertex at distance $k$, since all such covariances agree by automorphism invariance. Let $S_o(k)$ denote the set of vertices at distance exactly $k$ from $o$. Then
\[
\bE[\psi_o\psi_i]
=\frac{1}{|S_o(k)|}\,
\bE\!\left[\psi_o\sum_{w\in S_o(k)}\psi_w\right].
\]

Assume vertices at even distance from $o$ have degree $d_1$ and those at odd distance have degree $d_2$. Repeatedly applying the eigenvector equation yields the recurrence
\be\label{eq:biregularrec}
\left(\begin{array}{c}
    \displaystyle\sum_{y\in S_o(2j+1)} \psi_y \\
    \displaystyle\sum_{y\in S_o(2j)} \psi_y
\end{array}\right)
=
\left[
\left(\begin{array}{cc}
0 & 1 \\[0.2em]
\lambda & -(d_1-1)
\end{array}\right)
\left(\begin{array}{cc}
0 & 1 \\[0.2em]
\lambda & -(d_2-1)
\end{array}\right)
\right]^j
\left(\begin{array}{c}
\lambda\,\psi_o \\
\psi_o
\end{array}\right).
\ee

This means that $\bE[\psi_k\psi_o]/\bE[\psi_o^2]$ can be expressed as an explicit polynomial in $\lambda,d_1,d_2$, and that this must be satisfied by any eigenvector. As $\lim_{\eta\to0}\|(A-(\lambda+i\eta)\dI)(G_o)\|/\|G_o\|=0$, the same recurrence holds for the Green's function as well.

\end{proof}
\begin{remark}
    This proof also works for the regular tree (or indeed any graph with radial symmetry) by replacing \eqref{eq:biregularrec} with the correct degree sequence. 
\end{remark}

\subsection{Convergence to the Gaussian wave}
\label{sec:prevresults}
 \begin{proof}[Proof of \Cref{thm:GWconverge}]
 By \Cref{lem:zeromean}, the process is centered, and by \Cref{lem:weakconverge}, the covariance matrix of our process is $\sigma^2\Im[G]$, for some $0\leq \sigma^2\leq \frac1{\Im[m(\lambda)]}$. Therefore, it is sufficient to show that any convergent subsequence is typical. 
    
    Define $\mathcal{P}$ to be the set of Borel probability measures on $V(\cT_\bd)$ with second moment at most 1. As $\mathcal{P}$ is tight, it is compact under the metric $d_{LP}$ defined in \eqref{eq:levyprokhorov}. We then wish to define a corresponding metric on graphs. For any graph $\cG$, we can take the set $S(\cG)\deq\{\distr^*(f):f\in\bR^{V(\cG)},\frac1N\sum_{i=1}^Nf(i)^2\leq 1\}$, which once again, is compact in the weak topology. Therefore, we can define $d_N(\cG_1,\cG_2)=d_H(S(\cG_1),S(\cG_2))$, where $d_H$ is the Hausdorff distance under $d_{LP}$. Equivalently, $d_N(\cG_1,\cG_2)$ is the infimum over $\delta$ such that for any labeling $f_1$ of $\cG_1$ satisfying $\frac1N\sum_{i\in [N]}f_1(i)^2\leq 1$, there is a labeling $\hat{f}_1$ of $\cG_2$ such that $d_{LP}(\distr^*(f_1),\distr^*(\hat{f}_1))\leq \delta$, and for every $f_2$ a labeling of $\cG_2$, there is a labeling $\hat{f}_2$ of $\cG_1$ such that $d_{LP}(\distr^*(f_2),\distr^*(\hat{f}_2))\leq \delta$. 
    
   The Hausdorff distance of a compact metric between closed subsets is compact. Therefore
there exists a finite $\epsilon/3$ cover $M$ of the sets $S(\cG)$. This means there is set of subsets $T$ such that with probability at least $1/|M|$, our randomly selected graph $\cG$ has $d_H(S(\cG),T)\leq \epsilon/3$. Therefore, define the random variable $X(\cG)$ to be  $d_H(S(\cG),T)$ for the randomly selected graph $\cG\sim \cG(N,\bd)$. When we are performing a switch, we change the neighborhood of any fixed vertex with probability $O(1/N)$ as $N\rightarrow \infty$. Therefore, for L\'evy-Prokhorov distance, the change caused by this shift is at most $O(1/N)$. This means we can use a McDiarmid inequality on the Doob martingale (see \cite[Theorem 2.19]{wormald1999models} for an explicit derivation) to show that for some fixed $c>0$, with probability at least $1-\exp(-cN^{1/3})$, $|X(\cG)-\bE[X(\cG)]|\leq N^{-1/3}$. Therefore, for sufficiently large $N$ we must have $\bE[X]\leq \epsilon/2$. meaning, by the triangle inequality, with probability $1-\exp(-cN^{1/3})$, $d_N(\cG_1,\cG_2)\leq \epsilon$. By Borel-Cantelli II, this means that if there is an infinite subsequence that converges to a measure, then in fact this measure is $\epsilon$-typical. As $\epsilon$ is arbitrary, this implies that the limit is typical. 

 \end{proof}
The proof of \Cref{thm:GWconverge2} is identical to that of \Cref{thm:GWconverge}, we just replace \Cref{lem:zeromean} and \Cref{lem:weakconverge} with \Cref{lem:zeromeanreg} and \Cref{lem:weakconvergereg}.

\subsection{Comparing levels}\label{sec:levelcompare}
\begin{proof}[Proof of \Cref{lem:levelcompare}]
We use the fact that $\bD(X,Z)+\bD(Y,Z)-\bD(Z)\geq \bD(X,Y,Z)$, with equality if and only if $X$ and $Y$ are conditionally independent given $Z$. This gives the following equations, where $d_o$ is the degree of $o$:
\begin{align}\begin{split}\label{eq:levelwiseentropy}
\bD(\mu_{B_k(C_o)})&\leq \left(\sum_{i\sim o}\bD( \mu_{B_{k}(e_{oi})})\right)-(d_i-1)\bD(\mu_{B_{k-1}(C_o)})\\
\bD(\mu_{B_{k}(e_{oi})})&\leq \bD( \mu_{B_{k-1}(C_o)})+\bD( \mu_{B_{k-1}(C_i)})-\bD(\mu_{B_{k-1}(e_{oi})}).
\end{split}
\end{align}
The first equation in \eqref{eq:levelwiseentropy} follows as we can cover $B_{k}(C_o)$ with $d_o$ balls, one for each edge emerging from the central vertex. This will create an excess overlap of $d_o-1$ balls of radius $k-1$. We have equality if and only if the $d_o$ disconnected components of $B_k(C_o)\backslash B_{k-1}(C_o)$ are conditionally independent. 

Similarly, for the second equation of \eqref{eq:levelwiseentropy}, we can cover $B_{k}(e_{oi})$ with two balls surrounding each of the vertices, with an overlap of a ball of radius $k-1$. We only have equality if, when conditioned on $B_{k-1}(e_{oi})$, the two sides of $B_{k}(e_{oi})\backslash B_{k-1}(e_{oi})$ must be independent.

Therefore, taking an average over the root, we obtain
\begin{align}\begin{split}\label{eq:entropylevelshift}
&\phantom{{}={}}\bE_o\left[ \bD(\mu_{B_k(C_o)})-\frac12 \sum_{i\sim o}  \bD (\mu_{B_{k}(e_{oi})}) \right]\\
&\leq \bE_o\left[\sum_{i\sim o}\bD( \mu_{B_{k}(e_{oi})})-(d_o-1)\bD(\mu_{B_{k-1}(C_o)})-\frac12 \sum_{i\sim o}\bD( \mu_{B_{k}(e_{oi})})\right]\\
&=\bE_o\left[\frac12\left(\sum_{i\sim o}\bD( \mu_{B_{k}(e_{oi})})\right)-(d_o-1)\bD(\mu_{B_{k-1}(C_o)})\right]\\
&\leq\bE_o\left[\frac12\left(\sum_{i\sim o}\bD(\mu_{B_{k-1}(C_o)})+\bD(\mu_{B_{k-1}(C_i)})-\bD(\mu_{B_{k-1}(e_{oi})})\right)-(d_o-1)\bD(\mu_{B_{k-1}(C_o)})\right].
\end{split}
\end{align}
We now track the number of terms $\bD(\mu_{B_{k-1}(C_o)})$ in \eqref{eq:entropylevelshift}. We have $\frac12\sum_i{\bd_{oi}}-d_o+1$ from type $i$, and $\frac12\sum_{i}\bd_{oi}$ from neighbors of $o$. As $\sum_{i}\bd_{oi}=d_o$, the last row of \eqref{eq:entropylevelshift} reduces to 
\[
\bE_o\left[ \bD(\mu_{B_{k-1}(C_o)})-\frac12 \sum_{i\sim o}  \bD (\mu_{B_{k-1}(e_{oi})}) \right].
\]
We have equality for every $k$ if and only if we have equality in \eqref{eq:levelwiseentropy} for every $k$, which means that the distribution is 2-Markov.
\end{proof}
\section{Properties of the Green's Function}
Through this section, let $G=(H-z\dI)^{-1}$  denote the Green's function of Hermitian operator $H$ at $z=E+\ri \eta$. We recall the fundamental recursive properties of the Green's function. The first is the Schur complement formula. 
\begin{lemma}[Lemma 7.2 of \cite{erdHos2017dynamical}]\label{lem:schurcomp}
    For any subset of indices $\bT$, 
    \be\label{eq:schurcomp}
G|_{\bT}=(H|_\bT-z \dI-H|_{\bT \bT^c} G^{(\bT)}H|_{\bT^c \bT})^{-1}.
    \ee
\end{lemma}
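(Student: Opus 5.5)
This is the standard Schur complement formula for a block inverse. The plan is pure linear algebra applied to the block decomposition of $H-z\dI$ along $\bT\sqcup\bT^c$.

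Write
\[
H-z\dI=\begin{pmatrix} A & B\\ B^* & D\end{pmatrix},
\]
with $A=H|_{\bT}-z\dI$, $B=H|_{\bT\bT^c}$, $B^*=H|_{\bT^c\bT}$ (using Hermiticity of $H$), and $D=H|_{\bT^c}-z\dI$. By definition $G^{(\bT)}=(H|_{\bT^c}-z\dI)^{-1}=D^{-1}$, the Green's function of the operator with the indices of $\bT$ removed. For $z\in\Cplus$ this inverse exists and is bounded, since $\Im\langle v,(H|_{\bT^c}-z)v\rangle=-\eta\|v\|^2$. This gives $\|D^{-1}\|\le\eta^{-1}$, and the same bound gives invertibility of $H-z\dI$ itself.

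Next, I will verify the block LU factorization
\[
\begin{pmatrix} A & B\\ B^* & D\end{pmatrix}
=\begin{pmatrix} \dI & BD^{-1}\\ 0 & \dI\end{pmatrix}
\begin{pmatrix} A-BD^{-1}B^* & 0\\ 0 & D\end{pmatrix}
\begin{pmatrix} \dI & 0\\ D^{-1}B^* & \dI\end{pmatrix}.
\]
Both outer factors are invertible, with inverses obtained by negating the off-diagonal block. Hence the Schur complement $S\deq A-BD^{-1}B^*$ is invertible, because it equals a compression of the invertible operator $H-z\dI$. More directly, $\Im S=-\eta\dI-\eta B(D^{-1})^*D^{-1}B^*\le-\eta\dI$ as a form, so $S$ is invertible. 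Inverting the factorization and reading off the $\bT\bT$ block gives $G|_{\bT}=S^{-1}$, which is exactly \eqref{eq:schurcomp}.

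The only point needing care, which I regard as the main obstacle though a mild one, is that the operators may be infinite-dimensional, for instance on $\cT_\bd$. There $B$ and $D^{-1}$ must be bounded so that all products make sense. This holds because $H$ is a bounded adjacency operator of bounded degree, and because of the resolvent bound above. Alternatively, for the real-$\lambda$ statements used in the paper, one applies the identity at $\lambda+\ri\eta$ and takes $\eta\to0^+$ where the limits exist.
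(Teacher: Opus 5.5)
Your block-LU argument is correct and is the standard proof of the Schur complement formula. The paper gives no proof of its own and only cites Lemma 7.2 of Erd\H{o}s--Yau, which rests on exactly this block inversion. One wording slip: $S$ is not a compression of $H-z\dI$; rather $S^{-1}$ is the compression of $(H-z\dI)^{-1}$ to $\bT$. Invertibility of $S$ follows either because $\mathrm{diag}(S,D)$ is a product of invertible operators, or from your direct bound $\Im S\le -\eta\dI$, so nothing is lost.
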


The next is the decomposition of the Green's function.

\begin{lemma}[Lemma 8.3 of \cite{erdHos2017dynamical}] The following is true for the Green's function.
\begin{enumerate}
    \item 
    For $y,w\neq x$
    \be\label{eq:walkdecomp}
    G_{yw}^{(x)}=G_{yw}-\frac{G_{yx}G_{xw}}{G_{xx}}.
    \ee
    \item 
    If $x\neq y$, then
    \be\label{eq:gffactor}
    G_{xy}=-G_{xx}\sum_{w\neq x}H_{xw}G_{wy}^{(x)}.
    \ee
    \item (Ward Identity)
    We have that
    \be\label{eq:wardidentity}
    \sum_y |G_{xy}|^2=\frac{\Im(G_{xx})}{\eta}.
    \ee
    \end{enumerate}
\end{lemma}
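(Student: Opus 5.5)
The plan is to derive all three identities from the block-inversion (Schur complement) formula for $H-z\dI$ and the resolvent identity. We work with $z=E+\ri\eta$, $\eta>0$, where $H-z\dI$ is boundedly invertible; on our trees $H$ is a bounded self-adjoint operator since degrees are bounded. The real-$\lambda$ statements used in the paper then follow by taking $\eta\to0^+$ in the identities, whenever the limits exist.

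\emph{Block decomposition.} Take $\bT=\{x\}$ and write $H-z\dI$ in $2\times2$ block form with respect to $\{x\}\sqcup\{x\}^c$. Set $G^{(x)}=(H|_{\{x\}^c}-z\dI)^{-1}$. The standard block-inverse formula expresses the diagonal block through \Cref{lem:schurcomp}:
\[
G_{xx}=\Big(H_{xx}-z-\sum_{u,w\neq x}H_{xu}G^{(x)}_{uw}H_{wx}\Big)^{-1}.
\]
It also gives the two off-diagonal blocks:
\[
G_{yx}=-\sum_{w\neq x}G^{(x)}_{yw}H_{wx}G_{xx},\qquad G_{xy}=-G_{xx}\sum_{w\neq x}H_{xw}G^{(x)}_{wy}.
\]
The second of these is exactly \eqref{eq:gffactor}. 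Finally, the block-inverse formula gives the lower-right block:
\[
G|_{\{x\}^c}=G^{(x)}+G^{(x)}H_{\{x\}^c x}\,G_{xx}\,H_{x\{x\}^c}G^{(x)}.
\]
To verify these formulas directly, multiply the candidate inverse by $H-z\dI$ and check each block. This is routine; in infinite dimensions one only needs $G^{(x)}$ and $G_{xx}$ to exist, which holds for $\eta>0$.

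\emph{Proof of \eqref{eq:walkdecomp}.} Substitute the two off-diagonal expressions into the last term of the lower-right block. Its $(y,w)$ entry is
\[
\Big(\sum_u G^{(x)}_{yu}H_{ux}\Big)G_{xx}\Big(\sum_v H_{xv}G^{(x)}_{vw}\Big)=\frac{G_{yx}}{-G_{xx}}\,G_{xx}\,\frac{G_{xw}}{-G_{xx}}=\frac{G_{yx}G_{xw}}{G_{xx}}.
\]
Hence $G_{yw}=G^{(x)}_{yw}+G_{yx}G_{xw}/G_{xx}$, which rearranges to the claim. One must check $G_{xx}\neq0$; this holds for $\eta>0$, since $\Im G_{xx}=\eta\|G\delta_x\|^2>0$ by part 3, which is proved independently.

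\emph{Proof of \eqref{eq:wardidentity}.} Use the resolvent identity
\[
G-G^*=(z-\bar z)\,GG^*,
\]
which follows from $G^*=(H-\bar z\dI)^{-1}$ because $H$ is self-adjoint. Take the $(x,x)$ entry: $2\ri\,\Im G_{xx}=2\ri\eta\sum_y|G_{xy}|^2$, and dividing by $2\ri\eta$ gives the identity. The same computation for the $(u,v)$ entry gives $\eta\,(G_u)^*G_v=\Im$-part of $G_{uv}$ in the Hermitian sense; for real symmetric $H$ we have $G^T=G$, so this becomes $\Im G_{uv}$. This is the Gram-matrix form used in \eqref{eq:Mreduction}.

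\emph{Main obstacle.} The only delicate point is justifying the block-inverse manipulations for infinite operators. I would handle it by noting that, for $\eta>0$, all blocks are bounded with norm at most $\eta^{-1}$, so every sum appearing above is an absolutely convergent $\ell^2$ pairing. On a tree, the sums over $w\sim x$ are moreover finite.
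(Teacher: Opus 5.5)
Your proof is correct. The paper gives no argument of its own and simply cites \cite{erdHos2017dynamical}, where these identities come from the same $2\times 2$ block-inversion (Schur complement) computation and, for the Ward identity, the relation $G-G^*=(z-\bar z)GG^*$; your extra justification for a bounded self-adjoint $H$ with $\eta>0$ is worth keeping, since the paper applies the lemma on the infinite tree $\cT_\bd$ while the cited source works with finite matrices.
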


We collect the following results from \cite[Proposition 4.2]{anantharaman2019recent}.
\begin{lemma}\label{lem:conetypegf}
\begin{enumerate}[\rm (1)]
\item For any tree $\cT$ of finite cone type, there is a finite set $ \fD_1\subset \bR$ such that for every edge $(x,y)\in \cT$, $G_{xx}^{(y)}(\lambda+\ri \eta),G_{xx}(\lambda+\ri\eta)$ is finite for all $\lambda\in \bR\setminus  \fD_1$. The map $\lambda\mapsto G_{xx}^{(y)}(\lambda+i0)$ is continuous on $\bR\setminus \fD_1$.
\item Moreover,
there is a finite set $\fD_2$ such that for $\lambda\in \spec(\cT)\backslash (\fD_1\cup \fD_2)$, for every edge $(x,y)$, $\Im[G_{xx}^{(y)}]>0$ and $\Im[G_{xx}]>0$. 
\end{enumerate}
\end{lemma}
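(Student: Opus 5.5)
The plan is to reduce everything to a finite system of self-consistent equations indexed by cone types, and then read off analyticity and positivity from that system. Since $\cT$ has finite cone type, let the cone types be $1,\dots,m$. For an oriented edge $(y,x)$, the quantity $G^{(y)}_{xx}(z)$ depends only on the type $t$ of the cone rooted at $x$; write it as $g_t(z)$. By the Schur complement formula \eqref{eq:schurcomp} applied at $x$, as in \eqref{eq:finitecones}, these satisfy the closed polynomial system
\[
g_t(z)^{-1} = -z-\sum_{s} M_{ts}\, g_s(z),\qquad t\in[m],
\]
where $M_{ts}$ counts the children of type $s$ of a type-$t$ cone. Similarly $G_{xx}=(-z-\sum_{w\sim x}G^{(x)}_{ww})^{-1}$ is a rational function of finitely many $g_s$. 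For $z\in\Cplus$ each $g_t$ is a Herglotz function with $\Im g_t>0$, and it is the unique solution of the system in $(\Cplus)^m$ (standard by a contraction argument in the hyperbolic metric for $\Im z$ large, then by analytic continuation).

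For part (1), I would note that $(z,g)\mapsto$ the system defines an algebraic variety, so each $g_t$ is an algebraic function of $z$. Its boundary values $g_t(\lambda+\ri 0)$ therefore exist everywhere, and they are continuous, indeed real-analytic, away from a finite set $\fD_1$. This set consists of branch points (where the Jacobian of the system in $g$ is singular) and poles of the algebraic branch. It is finite because the discriminant and leading-coefficient polynomials are nonzero polynomials in $z$; they are nonzero since for large $|z|$ the solution is $g_t\approx -1/z$ and nondegenerate. Then $G_{xx}$, as a rational function of the $g_s$, is finite except where its denominator vanishes, and that again happens at finitely many real $\lambda$ unless it vanishes identically. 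Identical vanishing is excluded by the same large-$|z|$ asymptotics. Adding these finitely many points to $\fD_1$ gives (1).

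For part (2), the key step, and the one I expect to be the main obstacle, is showing that positivity of the imaginary part propagates through the whole system once it holds for one type. Taking imaginary parts of the system gives
\[
\frac{\Im g_t}{|g_t|^2} = \Im z+\sum_s M_{ts}\,\Im g_s .
\]
So at $\lambda+\ri0$ the vector $(\Im g_t)$ satisfies $v=DMv$ with $D=\mathrm{diag}(|g_t|^2)$ and $v\ge 0$. If the cone-type graph is irreducible (every type is reachable from every other, which holds for $\cT_\bd$ by irreducibility of $\bd$), then Perron–Frobenius shows that $v$ is either identically zero or strictly positive. On the real set where all $\Im g_t=0$, the $g_t$ are real solutions of the algebraic system. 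I would argue that if this happened on an interval inside $\spec(\cT)$, then by analyticity of the branch the density of the spectral measure, which is $\pi^{-1}\Im G_{oo}$, would vanish there. That would make $\lambda$ not in the support, a contradiction except at finitely many points. Hence the zero set inside the spectrum, outside $\fD_1$, is a finite set $\fD_2$ of points where the real-analytic function $\Im g_t$ vanishes without vanishing identically on a component. The delicate part is controlling this on the spectrum rather than on intervals: isolated atoms and endpoints of spectral bands must be absorbed into $\fD_2$, which is why these are finitely many algebraic points. Finally, $\Im G_{xx}>0$ follows from $\Im G_{xx}=|G_{xx}|^2(\Im z+\sum_{w}\Im G^{(x)}_{ww})>0$ once all $\Im g_s>0$ and $G_{xx}$ is finite.
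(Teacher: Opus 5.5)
Your proposal is correct in substance, but it does not follow the paper. The paper gives no argument of its own for this lemma; it imports it from Proposition 4.2 of Anantharaman--Sabri, which rests on the existing literature on finite-cone-type trees. You instead prove it directly, in three steps:
\begin{enumerate}[(i)]
\item The cone recursion $g_t^{-1}=-z-\sum_s M_{ts}g_s$ makes each $g_t$ a branch of an algebraic function. This gives existence, finiteness and real-analyticity of boundary values off a finite set. Say explicitly that invertibility of the Jacobian at the true solution for large $|z|$ is what places that solution on a one-dimensional component of the variety.
\item The identity $\Im g_t=|g_t|^2(\Im z+\sum_s M_{ts}\Im g_s)$, together with irreducibility of the type matrix, gives the all-or-nothing dichotomy for positivity.
\item On a component of $\bR\setminus\fD_1$ where all $\Im g_t\equiv 0$, every $\Im G_{xx}$ vanishes and every $G_{xx}$ is finite. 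So every spectral measure $\mu_x$, singular part included, gives it zero mass, and the component misses $\spec(\cT)$.
\end{enumerate}

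A few points need a line more.
\begin{itemize}
\item Finiteness, not just discreteness, of the zeros of $\Im g_t(\lambda+\ri 0)$ on a component requires non-accumulation at its endpoints. This follows from the Puiseux expansions there.
\item Irreducibility for $\cT_\bd$ should be checked on directed type-edges $(i\to j)$, whose children are $(j\to k)$ with multiplicity $\bd_{jk}-\delta_{ki}$. This is a quotient of the non-backtracking matrix of a connected graph realizing $\bd$, which has minimum degree at least $2$ and is not a cycle, so it is irreducible.
\item Your uniqueness claim in $(\Cplus)^m$ has a justification (``analytic continuation'') that does not work as stated: analytic continuation cannot rule out a second solution at a given $z$. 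The claim is never used, so you can drop it.
\end{itemize}

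What your route buys is transparency about hypotheses. Part (2) really needs something like your irreducibility assumption. Take a finite-cone-type tree with leaves, e.g.\ $\mathbb{Z}$ with a leaf attached at every vertex. Its leaf cones have $G^{(y)}_{xx}=-1/z$, whose boundary value is real at every $\lambda\neq 0$, while the spectrum is a union of intervals. So no finite $\fD_2$ works, and the lemma as literally stated for ``any tree of finite cone type'' is too broad. Your formulation covers exactly the case $\cT_\bd$ that the paper actually uses. The citation, in exchange, is short and defers all these technicalities to the literature.
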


We also a general property of the Green's function for generically chosen $z$.
\begin{lemma}\label{lem:smallcoefficients}
    For expanding degree matrix $\bd$, $\lambda\in\spec(\cT_\bd)\backslash \fD_\bd$, consider $G\deq \lim_{\eta\rightarrow 0}G^{\lambda+\ri\eta}$. Assume there is an automorphism on $\cT_{\bd}$ mapping directed edge $(x,y)$ to directed edge $(x',y')$ and the path has at least one vertex of degree 3. Then for the path $x=\gamma_0,\gamma_1,\ldots,\gamma_t=x'$, 
    \[
    \prod_{i\in [t]} |G_{\gamma_i\gamma_i}^{(\gamma_{i-1})}|< 1.
    \]
\end{lemma}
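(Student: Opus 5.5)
The plan is to derive the inequality from the imaginary part of the Schur complement recursion \eqref{eq:finitecones} along the path, and then telescope. Throughout I work at $z=\lambda+\ri\eta$ and only pass to $\eta\to0^+$ at the end, using \Cref{lem:conetypegf}. I set $\gamma_{-1}\deq y$ and orient the path so that the cone at $\gamma_i$ pointing away from $\gamma_{i-1}$ is the one that appears in the recursion. The automorphism sends the directed edge $(x,y)$ to $(x',y')$, with $\gamma_{t-1}$ playing the role of $y'$. Define
\[
g_i\deq G^{(\gamma_{i-1})}_{\gamma_i\gamma_i},\qquad a_i\deq \Im[g_i],\qquad 0\le i\le t.
\]
The automorphism maps the cone determined by $(\gamma_{-1},\gamma_0)$ onto the cone determined by $(\gamma_{t-1},\gamma_t)$. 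Hence $g_0=g_t$, and in particular $a_0=a_t$.

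\textbf{Key identity.} From \eqref{eq:finitecones}, $g_i^{-1}=-z-\sum_{w} G^{(\gamma_i)}_{ww}$, where the sum runs over $w\sim\gamma_i$ with $w\neq\gamma_{i-1}$. Taking imaginary parts of $g_i=\overline{g_i^{-1}}/|g_i^{-1}|^2$ gives
\[
a_i=|g_i|^2\Big(\eta+\sum_{w\sim\gamma_i,\,w\neq\gamma_{i-1}}\Im\big[G^{(\gamma_i)}_{ww}\big]\Big).
\]
Every term in the sum is nonnegative for $\eta>0$, and the term with $w=\gamma_{i+1}$ equals $a_{i+1}$. Therefore $a_i\ge |g_i|^2a_{i+1}$, with an additional strictly positive contribution $|g_i|^2\sum_{w\neq\gamma_{i\pm1}}\Im[G^{(\gamma_i)}_{ww}]$ whenever $\gamma_i$ has degree at least $3$. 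This identity is just the Ward identity \eqref{eq:wardidentity} for the cone Green's function, written recursively.

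\textbf{Telescoping.} Iterating from $i=0$ to $i=t-1$ gives
\[
a_0\ge \Big(\prod_{i=0}^{t-1}|g_i|^2\Big)a_t .
\]
Using $g_0=g_t$ to shift the index, the product equals $\prod_{i\in[t]}|g_i|^2$. Since $a_0=a_t$, this yields $\prod_{i\in[t]}|g_i|^2\le1$.

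\textbf{Strictness in the limit.} Now let $\eta\to0^+$. Because $\lambda\in\spec(A_{\cT_\bd})\setminus\fD_\bd$, \Cref{lem:conetypegf} and the definition of $\fD_\bd$ give that every $g_i$ and every $G^{(\gamma_i)}_{ww}$ has a finite limit, and every $\Im[G^{(\cdot)}_{ww}]$ has a strictly positive limit. In particular $0<a_t<\infty$, so dividing by it is legitimate. Let $\gamma_j$ be a vertex of degree at least $3$ on the path. I choose the cyclic shift (using $g_0=g_t$) so that $\gamma_j$ is an interior step of the telescoping. Then its extra children contribute a strictly positive term $|g_j|^2\,\Im[G^{(\gamma_j)}_{ww}]\cdot\prod_{i<j}|g_i|^2$ to the lower bound for $a_0$. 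Here $|g_j|>0$ because $a_j>0$, and likewise each $|g_i|>0$. This forces the inequality to be strict in the limit, so $\prod_{i\in[t]}|G^{(\gamma_{i-1})}_{\gamma_i\gamma_i}|<1$.

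\textbf{Main obstacle.} The delicate part is the bookkeeping rather than the analysis. First, I must match the orientation of the automorphism with the direction of the cones in the recursion, so that $g_0=g_t$ holds exactly rather than for a reversed edge. Second, I must make sure the degree-$\ge3$ vertex contributes strictly even when it sits at an endpoint of the path; the cyclic relabelling handles this. Third, the strict gap must survive the limit $\eta\to0$; this holds because the gap is a product of limits that are all strictly positive and finite off $\fD_\bd$.
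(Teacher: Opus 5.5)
Your proof is correct and is essentially the paper's argument. The paper iterates \eqref{eq:gffactor} to split the cone column $G^{(x)}_y$ into orthogonal pieces on disjoint subcones: one piece is a scaled copy of an isometric column via the automorphism, and the other is a nonzero remainder $X$ from a branching vertex. It then compares norms through the Ward identity. Your identity $a_i=|g_i|^2(\eta+\sum_w \Im[G^{(\gamma_i)}_{ww}])$ is exactly that norm decomposition multiplied by $\eta$.

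Working with $\eta\|\cdot\|^2=\Im[\cdot]$ is slightly cleaner, since the paper's norms diverge as $\eta\to0^+$ while your quantities have finite positive limits off $\fD_\bd$. Your convention puts $y$ behind $x$, giving $g_0=g_t$, whereas the paper takes $y=\gamma_1$ ahead of $x$; this difference is immaterial.
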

\begin{proof}
By taking the norm of the Green's function, we have that by iterating \eqref{eq:gffactor},
    \[
    \|G_y^{(x)}\|^2\geq \left(\prod_{i\in [t]} |G_{\gamma_i\gamma_i}^{(\gamma_{i-1})}|^2\right)\|G_{y}^{(x)}\|^2+\|X\|^2
    \]
    where $X$ has nonzero norm since at least one new forward cone is introduced. This immediate implies the result.
\end{proof}

 \subsection*{Acknowledgements.}
The research of A.D. is supported in part by NSF Grant DMS-1954337 and Grant DMS-2348142.

\bibliographystyle{plain}
\bibliography{ref}

\end{document}